\numberwithin{equation}{section}
\titleformat*{\section}{\Large \scshape\center}
\titleformat*{\subsection}{\fontsize{14}{14} \sffamily}
\theoremstyle{plain}
\newtheorem{theorem}{Theorem}[section]
\newtheorem*{theorem*}{Theorem}
\newtheorem{lemma}[theorem]{Lemma}
\newtheorem{proposition}[theorem]{Proposition}
\newtheorem{corollary}[theorem]{Corollary}
\theoremstyle{definition}
\newtheorem{definition}[theorem]{Definition}
\newtheorem*{definition*}{Definition}
\newtheorem{example}[theorem]{Example}
\theoremstyle{remark}
\newtheorem{remark}{Remark}
\DeclareMathOperator{\tr}{tr}
\begin{document}
\pagenumbering{gobble}
\title{\huge{A Quantum Harmonic Analysis \\ Approach to Segal Algebras}}
\author{Eirik Berge, Stine Marie Berge, and Robert Fulsche}
\date{}
\maketitle
\pagenumbering{arabic}
\begin{abstract}
    In this article, we study a commutative Banach algebra structure on the space  $L^1(\mathbb{R}^{2n})\oplus \mathcal{T}^1$, where the $\mathcal{T}^1$ denotes the trace class operators on $L^2(\mathbb{R}^{n})$. The product of this space is given by the convolutions in quantum harmonic analysis. Towards this goal, we study the closed ideals of this space, and in particular its Gelfand theory. We additionally develop the concept of quantum Segal algebras as an analogue of Segal algebras. We prove that many of the properties of Segal algebras have transfers to quantum Segal algebras. However, it should be noted that in contrast to Segal algebras, quantum Segal algebras are not ideals of the ambient space. We also give examples of different constructions that yield quantum Segal algebras.
\end{abstract}
\section{Introduction}
Classical mechanics of a particle on $\mathbb{R}^n$ is often described by endowing a symplectic structure on its phase space $\mathbb{R}^{2n}$. Given a point $(p,q)\in \mathbb{R}^{2n}$, the point $p$ describes the position and $q$ describes the momentum of a particle.
In quantum physics, functions on $\mathbb{R}^{2n}$ are typically referred to as \emph{classical observables}, while (not necessarily bounded) operators on $\mathcal H=L^2(\mathbb{R}^n)$ are referred to as \emph{quantum observables}.
The Weyl's canonical commutation relations on $\mathcal H$ plays the same role as the symplectic phase space structure in classical mechanics. Usually, the theory for the classical observables and the quantum observables are considered on different spaces, with quantization acting as a way to go between classical and quantum observables. In his 1984 paper \cite{werner84}, R.~Werner introduced a framework for the simultaneous treatment of both the classical and the quantum observables, which he called \emph{quantum harmonic analysis}. Here, he extends notions in harmonic analysis for the classical observables, such as convolutions and Fourier transforms, to the quantum observables. In doing so, one achieves interesting interactions between the classical and the quantum side of the theory. Besides being an object of interest for mathematical physics and also of intrinsic interest, this framework also allows for important applications in several fields, e.g., localization operators, Wigner distributions and Toeplitz operators. Reformulating problems in terms of quantum harmonic analysis offers a new framework for tackling problems, which often proves fruitful. For more information, see e.g., \cite{Berge_Berge_Luef_Skrettingland2022, Fulsche2020, Fulsche2022, Halvdansson2022, luef_eirik2018, Luef_Skrettingland2021, Sk20, Werner1983}.

Being a bit more precise, let $W_z$ denote the \emph{Weyl operators} on the Hilbert space $\mathcal H$ defined by
\begin{align*}
W_{(x,\,\xi)}\phi(y)=e^{i\xi y-ix\xi/2}\phi(y-x), \quad z=(x,\,\xi)\in \mathbb{R}^{2n}.
\end{align*}
As is common, we have set Planck's constant to $\hbar =1$. 
The Weyl operators are unitary operators on $\mathcal{H}$, and satisfies the \emph{Weyl commutation relations}
\begin{align*}
    W_z W_{z'}=e^{-i\sigma(z,\,z')/2}W_{z+z'} = e^{-i\sigma(z,\,z')} W_{z'} W_z, \quad z, z' \in \mathbb R^{2n},
\end{align*}
where $\sigma$ denotes the standard symplectic form on $\mathbb{R}^{2n}$.
Given two trace class operators $S$ and $T$, their convolution is defined by
\begin{align*}
    S \ast T(z) \coloneqq \tr(S W_z P T PW_{-z}), \quad z \in \mathbb R^{2n},
\end{align*}
where $P$ is the \emph{parity operator} acting on $\mathcal{H}$ by $P\phi(x) = \phi(-x)$.
The expression $\alpha_z(A) \coloneqq W_z A W_{-z}$ plays the role of \emph{shifting} the operator $A$ by $z$, analogous to the shift $\alpha_z(f) = f(\cdot -z )$ used in the definition of function convolution.
Defining the \emph{Fourier-Weyl transform} (also called \emph{Fourier-Wigner transform}) by 
\begin{align*}
    \mathcal{F}_W(S)(z) \coloneqq \tr(S W_z), \quad z \in \mathbb R^{2n},
\end{align*}
we have that 
\begin{align*}
    \mathcal{F}_\sigma(S\ast T) = \mathcal{F}_W(S)\mathcal{F}_W(T),
\end{align*}
where $\mathcal{F}_\sigma$ is the symplectic Fourier transform. Additionally, given a function $f\in L^1(\mathbb{R}^{2n})$ and a trace class operator $A\in \mathcal{T}^1(\mathcal{H})$, one defines the operator $f \ast A$ by
\begin{equation*}
    (f\ast A)\phi = \frac{1}{(2\pi)^n}\int_{\mathbb{R}^{2n}}f(z)W_z AW_{-z}\phi\,\mathrm{d}z, \quad z \in \mathbb R^{2n}.
\end{equation*}
Again, we have a relation with the Fourier transforms given by 
\[\mathcal{F}_W(f\ast A)=\mathcal{F}_\sigma(f)\mathcal{F}_W(A).\]
 Using the convolutions, one can define a product on $L^1(\mathbb{R}^{2n})\oplus\mathcal{T}^1(\mathcal H)$ by \[(f,S)\ast (g,T)=(f\ast g+S\ast T, f\ast T+g\ast S),\] making the space into a commutative Banach algebra. Hence, this new space contains both the classical and the quantum observables.

It is not hard to find reasons why $L^2(\mathbb R^{2n})$ and the Hilbert-Schmidt operators $\mathcal T^2(\mathcal H)$ play similar roles for classical and quantum observables. For example, the Fourier-Weyl transform can be extended to an isomorphism between these two spaces by the Plancherel theorem, see e.g., \cite[Sec.~7.5]{folland2016course}. For other function spaces on $\mathbb{R}^{2n}$, the paper \cite{werner84} gives a definition of \emph{corresponding spaces}. We say that a space of classical observables $\mathcal{D}_0$ and a space of quantum observables $\mathcal{D}_1$ are corresponding spaces if 
\begin{equation}\label{eq:corresponding spaces}
 (L^1(\mathbb{R}^{2n})\oplus\mathcal{T}^1(\mathcal H))\ast (\mathcal{D}_0\oplus\mathcal{D}_1)\subset \mathcal{D}_0\oplus\mathcal{D}_1.
\end{equation}
Using this definition $L^2(\mathbb{R}^{2n})$ and $\mathcal{T}^2(\mathcal{H})$ are corresponding spaces, after extending the definition of convolution appropriately, the same is true for $L^\infty(\mathbb R^{2n})$ and the space of bounded operators $\mathcal L(\mathcal H)$. It should be noted that this correspondence only becomes unique, in the sense that given a space $\mathcal{D}_0$ there is only one corresponding space $\mathcal{D}_1$, upon assuming additional topological properties on $\mathcal D_0$ and $\mathcal D_1$. Notice that when $\mathcal{D}_0\oplus\mathcal{D}_1\subset L^1(\mathbb{R}^{2n})\oplus\mathcal{T}^1(\mathcal H)$, equation \eqref{eq:corresponding spaces} implies that $\mathcal{D}_0\oplus \mathcal{D}_1$ is an ideal.

This paper has the dual purpose of both furthering our understanding of the space $L^1(\mathbb R^{2n}) \oplus \mathcal T^1(\mathcal H)$ as a commutative Banach algebra and introducing a quantum version of Segal algebras.
For the first goal, we completely classify the closed ideals of $L^1(\mathbb R^{2n}) \oplus \mathcal T^1(\mathcal H)$. As is well-known, closed ideals of $L^1(\mathbb{R}^n)$ are identical to closed shift-invariant subspaces. This analogy breaks down for closed ideals of $L^1(\mathbb R^{2n}) \oplus \mathcal T^1(\mathcal H)$, where 
shift-invariance is merely a necessary condition for being a closed ideal, but not sufficient. Nevertheless, there is still a rich connection between shift-invariant subspaces and closed ideals, which we investigate. 
As a special case, we describe the Gelfand theory of $L^1(\mathbb R^{2n}) \oplus \mathcal T^1(\mathcal H)$. From the Gelfand theory of $L^1(\mathbb{R}^{2n})$ it should come as no surprise that this is related to the Fourier transforms.  

The second purpose of the paper is to introduce a notion of \emph{quantum Segal algebras}. The quantum Segal algebras play an analogous role in quantum harmonic analysis to the Segal algebras in classical harmonic analysis. The definition of a quantum Segal algebra is in spirit the same as Segal algebras: A quantum Segal algebra is a dense subalgebra of $L^1(\mathbb R^{2n}) \oplus \mathcal T^1(\mathcal H)$ that comes with its own Banach algebra norm such that shifts act isometrically and continuously on it. We show that quantum Segal algebras are, in analogy with Segal algebras, the same as essential $L^1$ modules. Further, we prove that under the technical assumption of the space being \emph{graded} (i.e., the algebra has the structure of a direct sum of its classical and quantum part), they always have the same Gelfand theory as $L^1( \mathbb R^{2n}) \oplus \mathcal T^1(\mathcal H)$. As a last general result on quantum Segal algebras, we give a description of the intersection of all graded quantum Segal algebras.

After having discussed the general theory of quantum Segal algebras, we turn towards examples of such algebras. We give two different ways of constructing such algebras: By convolving a Segal algebra with a regular trace class operator or by using the Weyl quantization. As a special case of a quantum Segal algebra, we discuss the \emph{quantum Feichtinger algebra}. It is obtained as the direct sum of the Feichtinger algebra $\mathcal S_0(\mathbb R^{2n})$ and the subspace of $\mathcal T^1(\mathcal H)$ obtained by the Weyl quantization of symbols in $\mathcal S_0(\mathbb R^{2n})$. Due to the importance of the classical Feichtinger algebra, we find it appropriate to look into this example in some more detail. In particular, we give some conditions on when an operator belongs to the quantum Feichtinger algebra in terms of the operations of quantum harmonic analysis. Not by chance, this turns out to be entirely analogous to the characterization of the membership of functions in the classical Feichtinger algebra. 

Our paper is structured as follows: In Section \ref{sec:preliminaries}, we give a more detailed account of the foundations for our work. We start by recalling the basics of Segal algebras to thereafter review some results in quantum harmonic analysis. In particular, we introduce the notion of \emph{modulation} of an operator, which when applying the Fourier Weyl transform is turned into function shifts. This notion of modulation seems not to be present in the literature thus far. In Section \ref{sec: structure theory}, we conduct the investigation of the closed ideals of $L^1(\mathbb R^{2n}) \oplus \mathcal T^1(\mathcal H)$ as well as the Gelfand theory. Section \ref{sec: Quantum Segal} is then devoted to the introduction and the basic properties of quantum Segal algebras. Finally, Section \ref{sec: examples of QSA} will discuss the abovementioned constructions of quantum Segal algebras, in particular the quantum Feichtinger algebra.

\section{Preliminaries}\label{sec:preliminaries}

\subsection{Classical Segal Algebras}
Before introducing Segal algebras, let us quickly recall some basic facts about the space of integrable functions $L^1(\mathbb{R}^n)$. We will let $\alpha_x\colon L^1(\mathbb{R}^n)\to L^1(\mathbb{R}^n)$ denote the \emph{shift operator} 
\[\alpha_xf(y)\coloneqq f(y-x), \qquad f \in L^{1}(\mathbb{R}^{n}), \  x\in \mathbb{R}^n.\] 
For elements $f, g \in L^{1}(\mathbb{R}^{n})$ their \emph{convolution product} is
\[(f \ast  g)(x) \coloneqq \frac{1}{(2\pi)^{n/2}}\int_{\mathbb{R}^{n}}f(y)\alpha_{y}g(x) \, \mathrm{d}y = \frac{1}{(2\pi)^{n/2}}\int_{\mathbb{R}^{n}}f(y)g(x - y) \, \mathrm{d}y,\]
making $L^1(\mathbb{R}^n)$ into a Banach algebra. We want to note that the factors $\frac{1}{(2\pi)^{n/2}}$ serve as appropriate normalizations for the Haar measure, which makes the connection between classical and quantum harmonic analysis most natural.

The algebra $L^{1}(\mathbb{R}^{n})$ is equipped with the \emph{involution} \[f^{\ast }(x) \coloneqq \overline{f(-x)}.\] 
We use the notation $\mathcal{F}f$ for the \emph{standard Fourier transform} of $f \in L^{1}(\mathbb{R}^{n})$ given by 
\[\mathcal{F}f(\xi) \coloneqq \frac{1}{(2\pi)^{n/2}}\int_{\mathbb{R}^{n}}f(t)e^{- i t \xi} \, \mathrm{d}t.\]
We emphasize at this point that the above standard Fourier transform will not play a role later in this paper. It is usually replaced by the \emph{symplectic Fourier transform}, and we stress already at this point that $\widehat{f}$ will not denote the standard Fourier transform of $f$ as above, but its symplectic Fourier transform $\mathcal F_\sigma(f)$, see \eqref{eq:symplectic}.

Let us recall the definition of a Segal algebra, as e.g., given in \cite{reiter71}.
\begin{definition}[Segal Algebra]
A \emph{Segal algebra} $S$ is a linear subspace of $L^{1}(\mathbb{R}^{n})$ satisfying the following four properties:
\begin{enumerate}[(S1)]
\item\label{def:s1} The space $S$ is dense in $L^1(\mathbb R^n)$.
\item\label{def:s2} The space $S$ is a Banach algebra with respect to a norm $\| \cdot \|_S$ and convolution as the product.
\item\label{def:s3} The space $S$ is invariant under shifts, i.e., for $f\in S$ we have that $\alpha_x(f)\in S$ for all $x \in \mathbb{R}^{n}$.
\item\label{def:s4} The shifts $\alpha_x$ are continuous and satisfy $\|\alpha_{x}(f)\|_{S} = \|f\|_{S}$ for all $x \in \mathbb{R}^{n}$ and $f \in S$.
\end{enumerate}
\end{definition}
To check the continuity condition in property \ref{def:s4} one only need to check that given an arbitrary $f \in S$ and $\epsilon > 0$ there exists a neighborhood $U$ of the origin in $\mathbb{R}^{n}$ such that 
\[\|\alpha_{x}f - f\|_{S} < \epsilon \quad \text{ for } \quad  x \in U.\]
\begin{remark}
Segal algebras are often defined on a locally compact group $G$ as a dense subspace of $L^{1}(G)$ equipped with the left Haar measure and left shifts. In this more general case, the existence and variety of Segal algebras depend heavily on the group $G$ in question. For discrete groups the only Segal algebra is the whole space $l^{1}(G) = L^{1}(G)$.
On the other hand, for $G$ compact the space $L^{p}(G)$ is a Segal algebra for every $1\le p\le \infty$. In particular, the Segal algebra $L^2(G)$ is heavily used in Peter-Weyl theory.
\end{remark}

A well known result is that Segal algebras are (two-sided) ideals of $L^1(\mathbb{R}^n)$, i.e., for a Segal algebra $S$, we have that $f \ast g \in S$ whenever $f \in L^1(\mathbb R^n)$ and $g \in S$. Moreover, Segal algebras always satisfy by \cite[Prop.\ 4.1]{reiter71} the estimate
\begin{equation}\label{eq:banach_ideal_bound}
    \|f \ast  g\|_{S} \leq C \|f\|_{L^{1}(\mathbb{R}^{n})} \|g\|_{S},
\end{equation}
for $f \in L^{1}(\mathbb{R}^{n})$ and $g \in S$ for a $C$ only dependent on $\|\cdot\|_{S}$. By rescaling the norm on $S$ to an equivalent norm, one can take the constant $C$ to be $1$. More generally, one has that if $T\subset S$ are both Segal algebras then
\begin{equation}\label{eq:banach_ideal_bound_general}
    \|f \ast  g\|_{T} \leq C \|f\|_{S} \|g\|_{T}\qquad \ f\in S\,,  g\in T.
\end{equation}
We refer the reader to \cite[Prop.\ 4.2]{reiter71} for a generalization of \eqref{eq:banach_ideal_bound} involving bounded measures.

\begin{definition}
We say that a Segal algebra $S$ is \emph{star-symmetric} if the involution is an isometry in the $S$-norm, meaning that for $f\in S$ we have that $f^\ast \in S$ and $\| f^\ast\|_S = \| f\|_S.$
\end{definition}

Clearly $L^{1}(\mathbb{R}^{n})$ is a star-symmetric Segal algebra. In the following example, we will review a few classical examples of Segal algebras of central importance.

\begin{example}\label{ex:segal}\hfill
  \begin{enumerate}[(E1)]
      \item Let $C_{0}(\mathbb{R}^{n})$ denote the continuous function on $\mathbb{R}^{n}$ that vanish towards infinity. Then the space $S_{\infty} \coloneqq L^{1}(\mathbb{R}^{n}) \cap C_{0}(\mathbb{R}^{n})$ is a star-symmetric Segal algebra with the norm 
      \[\|f\|_{S_{\infty}} \coloneqq \|f\|_{L^{1}(\mathbb{R}^{n})} + \|f\|_{L^\infty(\mathbb{R}^{n})}.\]
      \item For $1 < p < \infty$ the space $S_{p} \coloneqq L^{1}(\mathbb{R}^{n}) \cap L^{p}(\mathbb{R}^{n})$ is a star-symmetric Segal algebra with the norm 
      \[\|f\|_{S_{p}} \coloneqq \|f\|_{L^{1}(\mathbb{R}^{n})} + \|f\|_{L^{p}(\mathbb{R}^{n})}.\]
      \item\label{ex:segal E3} Fix a positive and unbounded measure $\mu$ on $\mathbb{R}^{n}$, for example the Lebesgue measure $\mathrm{d}x$. For $1 \leq p < \infty$ we denote by \[S_{p}^{\mu} \coloneqq L^{1}(\mathbb{R}^{n}) \cap \mathcal{F}^{-1}\left(L^{p}(\mathbb{R}^{n}, \mu)\right).\] Then $S_{p}^{\mu}$ is a star-symmetric Segal algebra with the norm 
      \[\|f\|_{S_{p}^{\mu}} \coloneqq \|f\|_{L^{1}(\mathbb{R}^{n})} + \|\mathcal{F}(f)\|_{L^{p}(\mathbb{R}^{n}, \mu)}.\]
      \item Recall that \emph{Wiener's algebra} $W$ on $\mathbb R^{n}$ is defined as the space of those continuous functions on $\mathbb R^{n}$ such that
\begin{align*}
    \| f\|_W' \coloneqq \sum_{m \in \mathbb Z^{n}} a_m(f) < \infty, \quad \text{where } a_m(f) = \sup_{x \in [0,1]^{n}} |f(m + x)|.
\end{align*}
Clearly, $W \subset L^1(\mathbb R^{n}) \cap C_0(\mathbb R^{n})$. Letting the norm on $W$ be defined by
\begin{align*}
    \| f\|_W \coloneqq \sup_{y \in \mathbb R^{n}} \| \alpha_y(f)\|_W',
\end{align*}
turns $W$ into a Segal algebra. It is immediately clear that $W$ is also a module under pointwise multiplication by functions from $C_0(\mathbb R^{n})$. Indeed, $W$ is the smallest Segal with this property:
\begin{theorem}[\cite{feichtinger1977}]
    Let $ S \subset L^1(\mathbb R^n)$ be a Segal algebra which is a $C_0(\mathbb R^n)$ module under pointwise multiplication. Then, $S$ contains $W$ and $\| f\|_W \leq C\| f\|_S$ for all $f\in W$.
\end{theorem}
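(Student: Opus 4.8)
The plan is to show that $W$ embeds continuously into $S$. The estimate the argument naturally produces is $\|f\|_S \le C\|f\|_W$ (equivalently, $W \hookrightarrow S$ continuously), which is the precise sense in which $W$ is the smallest Segal algebra that is a $C_0(\mathbb R^n)$-module: indeed, taking $S = L^1(\mathbb R^n)$ already shows the reverse bound cannot hold in general. The strategy has two parts. First, manufacture out of the abstract axioms alone a compactly supported ``building block'' lying in $S$ with controlled norm; second, reconstruct an arbitrary $f \in W$ from translates of this block via a bounded partition of unity, using completeness of $S$ to sum the resulting series.

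For the building block I would start from the density axiom \ref{def:s1} to pick some $\phi \in S$, $\phi \neq 0$. Since Segal algebras are ideals of $L^1(\mathbb R^n)$ with the bound \eqref{eq:banach_ideal_bound}, convolving $\phi$ with a smooth compactly supported approximate identity $k$ produces $\psi_0 \coloneqq k \ast \phi \in S$ which is uniformly continuous, vanishes at infinity, and (for suitable $k$) is nonzero. Replacing $\psi_0$ by $\rho \coloneqq |\psi_0|^2 = \psi_0\,\overline{\psi_0}$, which lies in $S$ because $\overline{\psi_0} \in C_0(\mathbb R^n)$ and $S$ is a $C_0$-module, I obtain a nonnegative continuous $\rho \in S$ with $\rho \ge c > 0$ on some open cube; after a shift (using \ref{def:s3}--\ref{def:s4}) I may center this cube. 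The key trick is then to ``divide'': choosing $\chi \in C_c(\mathbb R^n)$ supported in that cube with $\chi = 1$ on a smaller cube $Q'$, the function $\beta \coloneqq \chi/\rho$ (set to $0$ off the cube) lies in $C_c \subset C_0$, so $\chi = \beta\,\rho \in S$ by the module property, with $\|\chi\|_S \le C\|\beta\|_\infty\|\rho\|_S$ a fixed constant. Thus $S$ contains a compactly supported plateau equal to $1$ on a cube.

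Next I would translate $\chi$ over a lattice $\Lambda$ fine enough that $\Phi \coloneqq \sum_{v \in \Lambda}\alpha_v\chi$ satisfies $1 \le \Phi \le M$; by \ref{def:s4} each $\alpha_v\chi$ has the same $S$-norm. Since $\alpha_v\chi$ is compactly supported, $\psi_v \coloneqq \alpha_v\chi/\Phi$ agrees on its support with $\alpha_v\chi$ times a $C_0$-function of sup-norm $\le 1$, hence $\psi_v \in S$ with $\|\psi_v\|_S \le C_1$ uniformly, and $\{\psi_v\}$ is a partition of unity subordinate to translates of a fixed cube. For $f \in W \subset C_0(\mathbb R^n)$ I then write $f = \sum_v f\psi_v$. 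Because each $\psi_v$ is supported in $v + Q$, the module bound localizes to $\|f\psi_v\|_S \le C\,\|\psi_v\|_S \sup_{v+Q}|f|$, and summing gives $\sum_v \|f\psi_v\|_S \le C'\sum_v \sup_{v+Q}|f| \le C''\|f\|_W$, the last step being the standard equivalence of Wiener-amalgam norms under a change of window and lattice. Completeness of $S$ makes the series converge in $S$ to some $F$ with $\|F\|_S \le C''\|f\|_W$; comparing with the pointwise (and $L^1$) identity $\sum_v f\psi_v = f$ and using the continuous inclusion $S \hookrightarrow L^1(\mathbb R^n)$ identifies $F = f$, so $f \in S$ with $\|f\|_S \le C''\|f\|_W$.

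The main obstacle is the first part: extracting a concrete compactly supported element of $S$ from the purely abstract hypotheses. The delicate point is that density only furnishes an arbitrary nonzero element, so one must bootstrap through both module structures — convolution (the $L^1$-ideal property) to gain continuity and a pointwise lower bound, and pointwise multiplication (the $C_0$-module property) to localize via the division trick $\chi = (\chi/\rho)\,\rho$. Once this block and its uniform-norm translates are in hand, the reconstruction in the second part is routine amalgam-space bookkeeping.
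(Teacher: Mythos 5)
The paper does not actually prove this theorem: it is quoted with a citation to Feichtinger's 1977 paper, so there is no internal proof to compare against. Your argument is, in essence, the standard Feichtinger-style proof, and it is correct --- with one caveat that works in your favor. The inequality as printed in the paper, $\|f\|_W \le C\|f\|_S$, cannot be what is meant: as you observe, $S = L^1(\mathbb{R}^n)$ satisfies all hypotheses (it is a Segal algebra and a $C_0(\mathbb{R}^n)$-module under pointwise multiplication), yet a spike of height $1$ and small width has arbitrarily small $L^1$-norm while its $W$-norm stays $\ge 1$. The provable conclusion is $W \subset S$ together with $\|f\|_S \le C\|f\|_W$ --- and once the set-theoretic inclusion $W \subset S$ is established, that estimate is automatic from the closed graph theorem, since both spaces embed continuously into $L^1(\mathbb{R}^n)$. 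So you proved the correct version of the statement; the displayed inequality in the paper is a typo.

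Two points in your write-up should be made explicit. First, you repeatedly use a quantitative module bound $\|gh\|_S \le C\|g\|_{\infty}\|h\|_S$ with $C$ independent of $g \in C_0(\mathbb{R}^n)$ and $h \in S$, whereas the hypothesis only gives $gh \in S$. This is fine, but needs an argument: for fixed $g$ the map $h \mapsto gh$ has closed graph (compare limits inside $L^1$), hence is bounded; symmetrically for fixed $h$; and a separately continuous bilinear map between Banach spaces is jointly continuous by the uniform boundedness principle. The uniformity genuinely matters in your partition-of-unity step, where the $C_0$-factor $f\eta_v$ varies with $v$. Second, since $\Phi \ge 1$ everywhere, the function $1/\Phi$ is \emph{not} in $C_0(\mathbb{R}^n)$, so the module property cannot be applied to it directly; as you indicate only in passing, one must multiply by a compactly supported cutoff $\eta_v \equiv 1$ on $\mathrm{supp}(\alpha_v\chi)$ and apply the module property to $\eta_v/\Phi \in C_c(\mathbb{R}^n)$ (and one should take $\chi \ge 0$ so that $\Phi \ge 1$ actually holds). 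A cleaner variant: since $\Phi$ is $\Lambda$-periodic, one can arrange $\psi_v = \alpha_v \psi_0$, so all the $\psi_v$ are translates of a single element of $S$ and the uniform bound $\|\psi_v\|_S = \|\psi_0\|_S$ follows from \ref{def:s4}. With these details filled in, your two-step structure --- (i) the localization trick $\chi = (\chi/\rho)\cdot\rho$ producing a compactly supported plateau function in $S$, and (ii) absolute summation of $f = \sum_v f\psi_v$ in $S$ controlled by the amalgam norm --- is a complete and correct proof.
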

      \item\label{ex:Feichtinger} The \emph{Feichtinger algebra} $\mathcal{S}_{0}(\mathbb R^n)$ in time-frequency analysis is typically defined through the short-time Fourier transform: One can define the Feichtinger algebra as the set of functions $f \in L^{1}(\mathbb{R}^{n}) \cap L^{2}(\mathbb{R}^{n})$ such that $V_{f}f \in L^{1}(\mathbb{R}^{2n})$, where
      \begin{equation}
      \label{eq:STFT}
          V_{g}f(x, \xi) \coloneqq \int_{\mathbb{R}^{n}}f(t)\overline{g(t - x)}e^{- i \xi t} \, \mathrm{d}t.
      \end{equation}
      Through this description, it is somewhat nontrivial to see that the Feichtinger algebra is a star-symmetric Segal algebra. However, one can also describe the Feichtinger algebra as the minimal Segal algebra that is (in a strong sense) closed under \emph{modulations}
      \begin{equation}\label{eq:modulation}
      M_{\xi}f(t) \coloneqq e^{i t \xi}f(t).
      \end{equation}
      We refer the reader to \cite{jakobsen18} for a careful examination of the Feichtinger algebra on locally compact groups.
  \end{enumerate}
\end{example}

\begin{remark}\label{re:intersection_Segal}
The intersection of all Segal algebras is precisely the set of continuous functions in $L^{1}(\mathbb{R}^{n})$ whose Fourier transform is compactly supported, we refer to \cite[(vii) p.~26]{reiter71} for a proof. As such, not every Segal algebra contains the Schwartz functions $\mathcal{S}(\mathbb{R}^{n})$ as a subspace.
\end{remark}

Segal algebras on $\mathbb{R}^{n}$ are not unital algebras, since $(2\pi)^{n/2}\delta_0\not\in L^1(\mathbb{R}^n)$. However, every Segal algebra  contains an approximate unit that is normalized in $L^{1}(\mathbb{R}^{n})$, see \cite[Prop.~8.1]{reiter71}. As such, the Cohen-Hewitt factorization theorem implies that every element in a Segal algebra $g \in S$ can be factorized as $g = f \ast  h$ for $f \in L^{1}(\mathbb{R}^{n})$ and $h \in S$. We succinctly write 
\begin{equation}\label{eq:Cohen_Hewitt_factorization}
    L^{1}(\mathbb{R}^{n}) \ast  S = S.
\end{equation} 
\par 
It follows from \cite[Prop.~3.1]{dales2014} and \eqref{eq:Cohen_Hewitt_factorization} applied to $S = L^{1}(\mathbb{R}^{n})$ that no maximal ideal of $L^{1}(\mathbb{R}^{n})$ can be dense. Hence Segal algebras are never maximal ideals. We note that $L^{1}(\mathbb{R}^{n})$ has plenty of closed maximal ideals, e.g., for every $\xi \in \mathbb{R}^{n}$ the space
\[I_{\xi} \coloneqq \left\{f \in L^{1}(\mathbb{R}^{n}) : \mathcal{F}f(\xi) = 0\right\},\] 
is a maximal ideal.
When it comes to ideals inside a Segal algebra $S \subset L^{1}(\mathbb{R}^{n})$, then every closed ideal $I_{S}$ in $S$ is on the form $I_{S} = I \cap S$, where $I$ is a unique closed ideal of $L^{1}(\mathbb{R}^{n})$ by \cite[Thm.~9.1]{reiter71}.

\subsection{Quantum Harmonic Analysis}
The operators in this article will always be on the Hilbert space $\mathcal H=L^2(\mathbb R^n)$. Recall that the \emph{Weyl operators} acting on $\mathcal H$ are given by
\begin{align*}
W_{(x,\,\xi)}\phi(y)\coloneqq e^{i\xi y-ix\xi/2}\phi(y-x),
\end{align*}
where $x, \, \xi \in \mathbb R^n$.
It is common to use the shorthand notation $W_z\coloneqq W_{(x,\,\xi)}$, where $z = (x,\, \xi)\in \mathbb{R}^{2n}$. The Weyl operators are unitary operators and satisfy $W_{z}^\ast = W_{-z}$. The name Weyl operators comes from the \emph{Weyl commutation relation}, often called the CCR relation, given by
\begin{align*}
    W_zW_{z'}=e^{-i\sigma(z,\,z')/2}W_{z+z'}=e^{-i\sigma(z,\,z')}W_{z'}W_{z}.
\end{align*}
Here, $\sigma$ denotes the symplectic form 
\begin{align*}
   \sigma(z, z')= \sigma((x, \xi), (x', \xi')) \coloneqq \xi x' - x \xi'.
\end{align*}
We will now define the basic operations of quantum harmonic analysis, namely convolutions and Fourier transforms. Unless otherwise stated, the proof of the statements in Section \ref{sec: QHA convolutions} and Section \ref{sec:qha fourier} can be found in \cite{werner84}.

\subsubsection{Convolutions}\label{sec: QHA convolutions}
Given a bounded linear operator $A \in \mathcal L(\mathcal H)$, we define \emph{shifting the operator} $A$ by $ z  \in \mathbb R^{2n}$ as
\[\alpha_z(A) \coloneqq W_z A W_{-z}.\]
Using shifts, one can define the \emph{function-operator convolution} for $f \in L^1(\mathbb R^{2n})$ and $A \in \mathcal L(\mathcal H)$ as the operator
\begin{equation}\label{eq:function_operator_convolution}
    f \ast A \coloneqq 
    \frac{1}{(2\pi)^n}\int_{\mathbb R^{2n}} f(z) \alpha_z(A)\,\mathrm{d}z\eqqcolon A\ast f.
\end{equation}
The convolution \eqref{eq:function_operator_convolution} is interpreted as a weak integral and the resulting operator is bounded. We say that a linear space $X\subset \mathcal{L} (\mathcal H)$ is \emph{shift-invariant} if $\alpha_z (X)\subset X$ for all $z\in \mathbb{R}^{2n}$. A Banach space $(X,\|\cdot \|_X)$ of operators $X\subset \mathcal L(\mathcal H)$ is said to have a \emph{strongly shift-invariant norm} if $\| \alpha_z(A)\|_X = \| A\|_X$ and $z \mapsto \alpha_z(A)$ is continuous in $\|\cdot\|_X$ for all $z\in \mathbb{R}^{2n}$. Given a shift-invariant Banach space with a strongly shift-invariant norm $(X,\|\cdot \|_X)$ continuously embedded into $\mathcal L(\mathcal H)$ then $f \ast A \in X$ for all $A\in X$ and $f \in L^1(\mathbb{R}^{2n})$.
Examples of such spaces are the \emph{$p$-Schatten class operators} $\mathcal T^p = \mathcal T^p(\mathcal H)$ for $1 \leq p \le \infty$. The special case $p=1$ is the \emph{trace class operators}, while $p=2$ is the \emph{Hilbert-Schmidt operators}. We use the standard convention that $\mathcal{T}^\infty$ is the compact operators with the operator norm.

We will use $P\colon \mathcal H\to \mathcal H$ to denote the \emph{parity operator} defined by $P\phi(x) = \phi(-x)$. Notice that the parity operator satisfies 
\[PW_z=W_{-z}P. \] For $A, B \in \mathcal T^1$ we define their \emph{operator-operator convolution} as
\begin{align*}
    A \ast B(z) \coloneqq \tr(A \alpha_z( P B P))=B \ast A(z), \quad z \in \mathbb R^{2n}.
\end{align*}
 It is of central importance in quantum harmonic analysis that $A \ast B \in L^1(\mathbb R^{2n})$. Furthermore, one has
\begin{align*}
   \frac{1}{(2\pi)^n} \int_{\mathbb R^{2n}} A \ast B(z) \,\mathrm{d}z = \tr(A) \tr(B).
\end{align*}
The convolutions satisfy the following associativity conditions
\[f\ast (A\ast B)=(f\ast A)\ast B, \qquad f\ast (g\ast A)=(f\ast g)\ast A\]
for $f,g\in L^1(\mathbb{R}^{2n})$ and $A,B\in \mathcal{T}^1$.

 For $A \in \mathcal T^1$ we define the \emph{operator involution} by 
\begin{align*}
    A^{\ast_{\textrm{QHA}}} \coloneqq PA^\ast P.
\end{align*}
The involution satisfies 
\begin{equation}\label{eq:involution operator operator}
(A\ast B)^\ast =(A^{\ast_{\textrm{QHA}}})\ast (B^{\ast_{\textrm{QHA}}})
\end{equation}
and 
\begin{equation}\label{eq:involution function operator}
(f\ast A)^{\ast _{\textrm{QHA}}}=(f^{\ast })\ast (A^{\ast_{\textrm{QHA}}})
\end{equation}
for $f\in L^1(\mathbb{R}^{2n})$ and $A,B\in \mathcal{T}^1$.

One can define a commutative product on $L^1 \oplus \mathcal T^1 \coloneqq L^1(\mathbb R^{2n}) \oplus \mathcal T^1(\mathcal H)$ by 
\[(f,A)\ast (g,B)=(f\ast g+A\ast B, f\ast B+g\ast A),\]
where $(f,A),\,(g,B)\in L^1 \oplus \mathcal T^1 $.
Defining the norm on $L^1 \oplus \mathcal T^1$ as
\begin{align*}
    \| (f, A)\| = \| f\|_{L^1} + \| A\|_{\mathcal T^1}
\end{align*}
turns the space into a commutative Banach algebra.
We can also make the space into a Banach $\ast $-algebra by defining the involution as 
\begin{equation*}
    (f, A)^\ast  = (f^\ast ,A^{\ast_{\textrm{QHA}}}).
\end{equation*}
Additionally, the involution is norm isometric:
\[\|(f,A)^\ast \|=\|(f,A)\|.\]
We define the \emph{shift} of $(f,A)\in L^1\oplus \mathcal{T}^1$ by an element $z\in \mathbb{R}^{2n}$ by 
\[\alpha_z(f,A)\coloneqq(\alpha_z f, \alpha_z A).\]
Using the shifts, we define a set $X\subset L^1\oplus \mathcal{T}^1$ to be \emph{shift-invariant} if 
\[\alpha_z(X)\subset X\text{ for all } z\in \mathbb{R}^{2n}.\]

\subsubsection{Fourier Transforms}\label{sec:qha fourier}
Quantum harmonic analysis comes with its own notion of Fourier transforms. We let $\mathcal F_{\sigma}(f)$ be the symplectic Fourier transform, i.e.,
\begin{equation}\label{eq:symplectic}
    \mathcal F_\sigma(f)(z') \coloneqq \frac{1}{(2\pi)^n}\int_{\mathbb R^{2n}}f(z) e^{-i\sigma(z', z)}\,\mathrm{d}z, \quad z' \in \mathbb R^{2n},\ f \in L^{1}(\mathbb{R}^{2n}).
\end{equation}
 Further, we define the \emph{Fourier-Weyl transform} by
\begin{align*}
    \mathcal F_W(A)(z') \coloneqq \tr(AW_{z'}), \quad z' \in \mathbb R^{2n},\ A\in \mathcal T^1.
\end{align*}
We will also occasionally denote the symplectic Fourier transform of $f$ by $\widehat{f} = \mathcal F_\sigma(f)$ and the Fourier-Weyl transform by $\widehat{A}=\mathcal{F}_W(A)$.
The Fourier-Weyl and symplectic Fourier transforms satisfy
\begin{equation}\label{eq:conv_operator_operator}
    \mathcal F_\sigma(A \ast B) = \mathcal F_W(A) \cdot \mathcal F_W(B)
\end{equation}
and 
\begin{equation}\label{eq:conv_function_operator}
    \mathcal{F}_W(f \ast A) = \mathcal{F}_\sigma(f) \cdot \mathcal F_W(A),
\end{equation}
for $A,B\in \mathcal{T}^1$ and $f\in L^1(\mathbb{R}^{2n})$.
Notice that 
\begin{equation}\label{eq:Fourier conjugate}
    \mathcal F_W(A^{\ast_{\textrm{QHA}}}) = \overline{\mathcal F_W(A)}\quad\text{ and }\quad\mathcal F_\sigma(f^{\ast}) = \overline{\mathcal F_\sigma(f)}.
\end{equation}
The map $\mathcal T^1 \ni A \mapsto \mathcal F_W(A)$ is injective and is in fact an isomorphism between $\mathcal{T}^2$ and $L^2(\mathbb{R}^{2n})$. We define the Fourier transform on $L^1\oplus\mathcal{T}^1$ by 
\begin{equation*}
    \mathcal F(f, A) = (\mathcal F_\sigma(f), \mathcal F_W(A)), \quad (f, A) \in L^1 \oplus \mathcal T^1.
\end{equation*}

The inverse of the Fourier-Weyl transform  denoted by $\mathcal F_W^{-1}$ is given by 
\begin{equation*}
    \mathcal F_W^{-1}(f) = \int_{\mathbb R^{2n}} f(z) W_{-z}\,\mathrm{d}z,
\end{equation*}
understood as an integral in strong operator topology for $f \in L^1(\mathbb R^{2n}).$ Using interpolation, one can define the inverse Fourier-Weyl transform on $L^p$ for $1\le p\le 2$. It should be noted that the inverse Fourier-Weyl transform can be generalized to tempered distributions, see \cite{keyl_kiukas_werner16}. In particular, this means that one can define the Fourier-Weyl transform of all compact operators.

The Fourier-Weyl transform satisfies a version of the Riemann Lebesgue lemma and Hausdorff-Young inequality, see \cite{werner84} and \cite[Prop.~6.5 and 6.6]{luef_eirik2018}.
In particular, we have that 
\begin{equation}\label{eq:Riemann Lebesgue}
    \mathcal F(f, A) \in C_0(\mathbb R^{2n}) \oplus C_0(\mathbb R^{2n}).
\end{equation}
 Analogous results are true for $\mathcal F_W^{-1}$, as we show now.
\begin{lemma}\label{inverseRiemannLebesgue}
 Let $1 \leq p \leq 2$ and $\frac{1}{p} + \frac{1}{q} = 1$. For $f \in L^p(\mathbb R^{2n})$ one has $\mathcal F_W^{-1}(f) \in \mathcal T^q(\mathcal H)$ and $\mathcal{F}_W^{-1}$ satisfies the Hausdorff-Young inequality \[\| \mathcal F_W^{-1}(f)\|_{\mathcal T^q} \leq \| f\|_{L^p}.\]
    In particular, for $p=1$ we have the Riemann-Lebesgue lemma: $\mathcal F_W^{-1}(f) \in \mathcal T^\infty $.
\end{lemma}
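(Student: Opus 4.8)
The plan is to establish the two endpoint estimates and then interpolate, exactly as in the classical proof of Hausdorff-Young. At the endpoint $p = 2$, $q = 2$, I would use that $\mathcal F_W$ restricts to a unitary isomorphism between $\mathcal T^2$ and $L^2(\mathbb R^{2n})$ (the Plancherel theorem recalled above), so that its inverse satisfies $\| \mathcal F_W^{-1}(f)\|_{\mathcal T^2} = \| f\|_{L^2}$, which is the claimed inequality with equality. At the endpoint $p = 1$, $q = \infty$, I would use the defining strong-operator integral $\mathcal F_W^{-1}(f) = \int_{\mathbb R^{2n}} f(z) W_{-z}\,\mathrm{d}z$ together with the unitarity of the Weyl operators, $\| W_{-z}\|_{\mathcal L(\mathcal H)} = 1$: estimating the (weak) integral by the triangle inequality yields $\| \mathcal F_W^{-1}(f)\|_{\mathcal L(\mathcal H)} \leq \int_{\mathbb R^{2n}} |f(z)|\,\mathrm{d}z = \| f\|_{L^1}$.

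With both endpoints in hand, I would invoke complex interpolation. Since the strong-integral definition on $L^1$ and the Plancherel definition on $L^2$ agree on the dense subspace $L^1(\mathbb R^{2n}) \cap L^2(\mathbb R^{2n})$, the map $\mathcal F_W^{-1}$ is a single linear operator on the interpolation couple $(L^1, L^2)$. Using $[L^1(\mathbb R^{2n}), L^2(\mathbb R^{2n})]_\theta = L^p(\mathbb R^{2n})$ and that the Schatten ideals form a complex interpolation scale, $[\mathcal L(\mathcal H), \mathcal T^2]_\theta = \mathcal T^q$, with $\theta$ determined by $\frac 1p = 1 - \frac\theta 2$ and $\frac 1q = \frac\theta 2$ (so that automatically $\frac 1p + \frac 1q = 1$ and $\theta$ sweeps out all of $1 \leq p \leq 2$), the Riesz-Thorin/Calderón theorem gives $\| \mathcal F_W^{-1}(f)\|_{\mathcal T^q} \leq \| f\|_{L^p}$.

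Finally, for the Riemann-Lebesgue statement at $p = 1$, interpolation only delivers boundedness into $\mathcal L(\mathcal H)$, so compactness requires a separate density argument: for $f \in L^1 \cap L^2$ one has $\mathcal F_W^{-1}(f) \in \mathcal T^2 \subset \mathcal T^\infty$, and since $L^1 \cap L^2$ is dense in $L^1$, the $p=1$ bound shows $\mathcal F_W^{-1}$ is continuous from $L^1$ into $\mathcal L(\mathcal H)$ and maps a dense set into the closed subspace $\mathcal T^\infty$; hence $\mathcal F_W^{-1}(f) \in \mathcal T^\infty$ for every $f \in L^1$. I expect no genuine difficulty here; the only real care needed is the bookkeeping for interpolation — verifying the consistency of the two definitions of $\mathcal F_W^{-1}$ on $L^1 \cap L^2$ and citing the correct interpolation identity $[\mathcal L(\mathcal H), \mathcal T^2]_\theta = \mathcal T^q$ for the Schatten classes.
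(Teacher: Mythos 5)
Your proposal is correct and follows essentially the same route as the paper: the $L^1\to\mathcal L(\mathcal H)$ bound from unitarity of the Weyl operators, the Plancherel isometry $L^2\to\mathcal T^2$ as the other endpoint, complex interpolation for the Hausdorff--Young inequality, and a density argument (approximation by Hilbert--Schmidt operators in operator norm) for compactness at $p=1$. Your write-up is in fact slightly more careful than the paper's, which leaves the consistency of the two definitions of $\mathcal F_W^{-1}$ on $L^1\cap L^2$ and the identification of the interpolation spaces implicit.
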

\begin{proof}
For the proof of the Riemann-Lebesgue lemma, note that $\mathcal F_W^{-1}(f) \in \mathcal T^{2}\subset \mathcal T^{\infty}$ for $f \in L^1(\mathbb{R}^{2n}) \cap L^2(\mathbb{R}^{2n})$. Hence, for arbitrary $f \in L^1(\mathbb{R}^{2n})$ the operator $\mathcal F_W^{-1}(f)$ can be approximated by Hilbert-Schmidt operators in operator norm, hence it is compact.

Notice that for compact operators, the operator norm is given by $\|\cdot\|_{\mathrm{Op}}=\|\cdot\|_{\mathcal{T}^\infty}$.
Since $\| W_{-z}\|_{\mathrm{Op}} = 1$ and $z \mapsto W_{-z}$ is continuous in the strong operator topology, clearly $\| \mathcal F_W^{-1}(f)\|_{\mathrm{Op}} \leq \| f\|_{L^1}$.  Furthermore, we know that $\mathcal F_W^{-1}\colon  L^2(\mathbb R^{2n}) \to \mathcal T^2$ is an isometry.
Hence, the result follows from applying complex interpolation.
\end{proof}

One can also consider the convolution of a complex Radon measure with an operator. Denote by $\operatorname{Meas}(\mathbb R^{2n})$ the set of all complex Radon measures on $\mathbb R^{2n}$, i.e., the set of Borel measures that are finite on compact sets and are both inner and outer regular. Then, for $\mu \in \operatorname{Meas}(\mathbb R^{2n})$ and $A \in \mathcal T^1$, we define
\begin{equation*}
    \mu \ast A \coloneqq \frac{1}{(2\pi)^n}\int_{\mathbb R^{2n}} \alpha_z(A) \,\mathrm{d}\mu(z).
\end{equation*}
The above expression is valid as a Bochner integral in $\mathcal T^1$. In particular, $(2\pi)^n\delta_0 \ast A = A$. One can verify that we still have the product formula
\begin{equation*}
    \mathcal F_W(\mu \ast A) = \mathcal F_\sigma(\mu) \mathcal F_W(A),
\end{equation*}
where
\begin{equation*}
    \mathcal F_\sigma(\mu)(z') \coloneqq \frac{1}{(2\pi)^n}\int_{\mathbb R^{2n}} e^{-i\sigma(z', z)}\, \mathrm{d}\mu(z).
\end{equation*}
Then, if $A \in \mathcal T^1$ is such that $\mathcal F_W(A)(\xi) \neq 0$ for every $\xi \in \mathbb R^{2n}$, we clearly obtain that the map
\begin{equation*}
    \operatorname{Meas}(\mathbb R^{2n}) \ni \mu \mapsto \mu \ast A \in \mathcal T^1
\end{equation*}
is injective.

\subsubsection{Schwartz Operators and Weyl Quantization}
The article \cite{keyl_kiukas_werner16} defines Schwartz and tempered operators, mirroring the Schwartz functions $\mathcal{S}(\mathbb{R}^{2n})$ and tempered distributions $\mathcal{S}'(\mathbb{R}^{2n})$ in harmonic analysis.
We denote by $\mathcal S(\mathcal H) \subset \mathcal T^1(\mathcal H)$ the space of \emph{Schwartz operators}, which is a Frech\'{e}t space that is continuously embedded into $\mathcal T^1(\mathcal H)$. Its topology can be described in terms of a countable family of seminorms, see \cite[Sec.\ 3]{keyl_kiukas_werner16}. We will need the fact that the Fourier-Weyl transform is a topological isomorphism $\mathcal F_W\colon  \mathcal S(\mathcal H) \to \mathcal S(\mathbb R^{2n})$. Hence, a trace class operator $A$ is a Schwartz operator if and only if $\mathcal F_W(A)$ is a Schwartz function. Similarly, a Hilbert-Schmidt operator $A \in \mathcal T^2(\mathcal H)$ is a Schwartz operator if and only if its integral kernel is a Schwartz function.

Having $\mathcal S(\mathcal H)$ available, one can pass to the dual space $\mathcal S'(\mathcal H)$ of \emph{tempered operators}. The bounded operators $\mathcal L(\mathcal H)$ continuously embed into $\mathcal S'(\mathcal H)$ via
\begin{equation*}
    \varphi_A(S) = \tr(AS), \quad S \in \mathcal S(\mathcal H)
\end{equation*}
for $A \in \mathcal L(\mathcal H)$. The Fourier transform, being defined by duality, then extends to a topological isomorphism from $\mathcal S'(\mathcal H)$ to $\mathcal S'(\mathbb R^{2n})$. In particular, we can talk about $\mathcal F_W(A)$ as a tempered distribution on $\mathbb R^{2n}$ for every $A \in \mathcal L(\mathcal H)$. The Fourier-Weyl transform satisfies \eqref{eq:conv_operator_operator} between $\mathcal S(\mathcal H)$ and $\mathcal S'(\mathcal H)$ and \eqref{eq:conv_function_operator} between $\mathcal S(\mathbb R^{2n})$ and $\mathcal S'(\mathcal H)$.

 Define the \emph{Weyl quantization} of an $L^1(\mathbb
{R}^{2n})$ function $f$ to be the operator 
\[A_f\coloneqq P\mathcal{F}_W^{-1}(\mathcal{F}_\sigma(f))P=\mathcal{F}_W^{-1}(\mathcal{F}_\sigma(\widetilde{f})),\]
where $\widetilde{f}(z)=f(-z)$.
Notice that the Weyl quantization also makes sense for $f \in \mathcal{S}'(\mathbb{R}^{2n})$.
Thus, $f \mapsto A_f$ maps from $\mathcal S'(\mathbb R^{2n})$ to $\mathcal S'(\mathcal H)$.
Since the quantization map is also a topological isomorphism from $\mathcal S(\mathbb R^{2n})$ to $\mathcal S(\mathcal H)$, we have
\begin{equation*}
    \mathcal S(\mathcal H) \subset \{ A \in \mathcal T^1(\mathcal H): A = A_f, ~f \in L^1(\mathbb R^{2n})\}.
\end{equation*}
Taking the Weyl quantization of the delta function \[(2\pi)^n\delta_{0}\in \mathcal{S}'(\mathbb R^{2n})\] gives $A_{(2\pi)^n\delta_{0}}=2^n P$, where $P$ is the parity operator.
Note that the quantization map satisfies $\alpha_z(A_f) = A_{\alpha_{-z}(f)}$. For $g \in L^p(\mathbb R^{2n})$ where $1 \leq p \leq \infty$ and $f \in L^1(\mathbb R^{2n})$ we have the convolution formulas
\begin{equation}\label{eq: quantization of convolution}
    f \ast A_g = A_{\widetilde{f}\ast g}, \quad A_f \ast A_g = \widetilde{f \ast g}.
\end{equation}
 This gives us the following quantization identity
\[2^nf\ast P=A_{\widetilde{f}},\qquad 2^nA_f\ast P=\widetilde{f}\]
for $f\in \mathcal{S}(\mathbb{R}^{2n})$.

\subsection{Modulation Invariance}
In this section, we will develop a notion of modulation for operators. This allows us to define modulation-invariant spaces of operators. Motivated by the fact that the classical Fourier transform takes shifts to modulations and vice versa, we have the following definition.

\begin{definition}
We say that a subspace $\mathcal A \subset \mathcal T^{1}$ is \emph{modulation-invariant} if \[\mathcal F_W(\mathcal A) \coloneqq \left\{\mathcal{F}_{W}(A) :  A \in \mathcal{A}\right\}\] is a shift-invariant subspace of $C_{0}(\mathbb{R}^{2n})$.
\end{definition}

\begin{lemma} \label{lem:modulation_invariance}
Let $\mathcal A \subset \mathcal T^1$ be a subspace. Then $\mathcal A$ is modulation-invariant if and only if $W_{z} A W_{z} \in \mathcal A$ for every $A \in \mathcal A$ and $z \in \mathbb R^{2n}$.
\end{lemma}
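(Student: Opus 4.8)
The plan is to reduce the entire statement to a single intertwining identity: the operation $A \mapsto W_z A W_z$ corresponds, under the Fourier--Weyl transform, to a function shift. Once this is established, both implications become purely formal. So the first and decisive step is to compute, for $A \in \mathcal T^1$ and $z, z' \in \mathbb R^{2n}$,
\[
\mathcal F_W(W_z A W_z)(z') = \tr(W_z A W_z W_{z'}) = \tr(A\, W_z W_{z'} W_z),
\]
using cyclicity of the trace.

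The core subcomputation is the operator identity $W_z W_{z'} W_z = W_{2z + z'}$. To get it, I would apply the Weyl commutation relation twice: first $W_z W_{z'} = e^{-i\sigma(z,z')/2} W_{z+z'}$, and then $W_{z+z'} W_z = e^{-i\sigma(z+z',z)/2} W_{2z+z'}$. Since $\sigma(z,z)=0$ and $\sigma$ is antisymmetric, $\sigma(z+z',z) = \sigma(z',z) = -\sigma(z,z')$, so the two phase factors are mutually inverse and cancel, leaving $W_z W_{z'} W_z = W_{2z+z'}$. Consequently
\[
\mathcal F_W(W_z A W_z)(z') = \tr(A W_{2z+z'}) = \mathcal F_W(A)(z'+2z) = \alpha_{-2z}\bigl(\mathcal F_W(A)\bigr)(z'),
\]
that is, $\mathcal F_W(W_z A W_z) = \alpha_{-2z}(\mathcal F_W(A))$. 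This is the heart of the argument, and I expect the only genuine pitfall to be tracking the sign and the factor of $2$ in the shift, which is precisely where the antisymmetry of $\sigma$ must be invoked correctly.

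With this identity in hand, both directions follow quickly. I would first note that $\mathcal F_W(\mathcal A)$ is automatically a linear subspace of $C_0(\mathbb R^{2n})$, by linearity of $\mathcal F_W$ together with the Riemann--Lebesgue property \eqref{eq:Riemann Lebesgue}; hence modulation-invariance of $\mathcal A$ amounts exactly to shift-invariance of $\mathcal F_W(\mathcal A)$. For the implication ``$\Leftarrow$'', assume $W_z A W_z \in \mathcal A$ for all $A \in \mathcal A$ and $z$. Given $g = \mathcal F_W(A) \in \mathcal F_W(\mathcal A)$ and an arbitrary shift parameter $w$, set $z = -w/2$; then $\alpha_w(g) = \alpha_{-2z}(\mathcal F_W(A)) = \mathcal F_W(W_z A W_z) \in \mathcal F_W(\mathcal A)$, so $\mathcal F_W(\mathcal A)$ is shift-invariant.

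For the implication ``$\Rightarrow$'', assume $\mathcal F_W(\mathcal A)$ is shift-invariant and fix $A \in \mathcal A$ and $z \in \mathbb R^{2n}$. Then $\alpha_{-2z}(\mathcal F_W(A)) \in \mathcal F_W(\mathcal A)$, so there is some $B \in \mathcal A$ with $\mathcal F_W(B) = \alpha_{-2z}(\mathcal F_W(A)) = \mathcal F_W(W_z A W_z)$. Since $\mathcal F_W$ is injective on $\mathcal T^1$, this forces $B = W_z A W_z$, so $W_z A W_z \in \mathcal A$. This closes the argument; the only external inputs are the Weyl commutation relation, cyclicity of the trace, injectivity of $\mathcal F_W$ on $\mathcal T^1$, and the Riemann--Lebesgue property.
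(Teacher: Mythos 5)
Your proof is correct and follows essentially the same route as the paper: the paper's proof consists of exactly your key computation (in the equivalent parametrization $\alpha_z(\mathcal F_W(A)) = \mathcal F_W(W_{-z/2}AW_{-z/2})$, obtained via the Weyl commutation relation and cyclicity of the trace), after which it declares the claim to follow easily. You additionally spell out the two implications, including the use of injectivity of $\mathcal F_W$ on $\mathcal T^1$ for the forward direction, which the paper leaves implicit; this is a faithful completion, not a different argument.
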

\begin{proof}
For $z' \in \mathbb{R}^{2n}$ the claim follows easily from the computation
\begin{align*}
    \alpha_{z}\left(\mathcal{F}_{W}(A)(z')\right) & = \tr(W_{z' - z}A) \\  
    & = \tr\left( W_{-\frac{z}{2}}AW_{-\frac{z}{2}} W_{z'}\right) \\ & = \mathcal{F}_{W}\left(W_{-\frac{z}{2}}AW_{-\frac{z}{2}}\right)(z').\qedhere
\end{align*}
\end{proof}

Motivated by the proof of Lemma~\ref{lem:modulation_invariance} we give the following definition for the modulation of an operator.
\begin{definition}
The \emph{modulation} of an operator $A \in \mathcal{T}^{1}$ by $z \in \mathbb{R}^{2n}$ is defined by \[\gamma_{z}(A)\coloneqq W_{-z/2}AW_{-z/2}.\]
For $f \in L^{1}(\mathbb{R}^{2n})$ we define the modulation by
\begin{equation*}
    \gamma_{z}(f)(z') \coloneqq e^{-i\sigma(z', z)} f(z').
\end{equation*}
\end{definition}

It is straightforward to verify the formulas 
\begin{align*}
    \gamma_{z}(f \ast g) & = (\gamma_{z}f) \ast (\gamma_{z} g), \\
    \gamma_{z}(f \ast A) & = (\gamma_{z} f) \ast (\gamma_{z}A), \\
    \gamma_{z}(A \ast B) & = (\gamma_{z} A) \ast (\gamma_{z}B),
\end{align*}
for $f, g \in L^{1}(\mathbb{R}^{2n})$ and $A, B \in \mathcal T^1$.
Notice that for $A \in \mathcal{T}^{1}$ and $f\in L^1(\mathbb{R}^{2n})$ we can write 
\[\mathcal{F}_{W}(A)(z) = \tr(\gamma_{-z}A)\quad \text{ and } \quad \mathcal{F}_{\sigma}(f)(z) = \int_{\mathbb{R}^{2n}}\gamma_{-z}(f)(z')\, \mathrm{d} z'.\]

\begin{lemma}
Let $\mathcal A$ be a shift-invariant subspace of $\mathcal T^1$. Then the following are equivalent:
\begin{enumerate}[1)]
    \item\label{eq:modulation_1} $\mathcal A$ is modulation-invariant,
    \item $W_{z} A \in \mathcal A$ for every $A\in \mathcal A$ and $z \in \mathbb R^{2n}$,
    \item $AW_{z} \in \mathcal A$ for every $A \in \mathcal A$ and $z\in \mathbb R^{2n}$,
    \item\label{eq:modulation_4} $W_{z'}AW_{z} \in \mathcal A$ for every $A \in \mathcal A$ and $z',z\in \mathbb R^{2n}$,
    \item The space consisting of all integral kernels of $A\in \mathcal{A}$ is shift- and modulation-invariant.
\end{enumerate} 
\end{lemma}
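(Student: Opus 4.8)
The plan is to treat condition 4 as the central ``hub'': conditions 1, 2, 3 are each a visible special case of 4, and the assumed shift-invariance of $\mathcal A$ is exactly what lets me upgrade each of 1, 2, 3 back to 4. Condition 5 I will then handle separately at the level of integral kernels. First note that $4 \Rightarrow 1,2,3$ comes for free: setting $z'=z$ in 4 gives $W_zAW_z \in \mathcal A$, which by Lemma~\ref{lem:modulation_invariance} is precisely modulation-invariance (condition 1); taking the right factor $z=0$ gives condition 2; and taking the left factor $z'=0$ gives condition 3.

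For the reverse implications I would use the Weyl commutation relation $W_aW_b = e^{-i\sigma(a,b)/2}W_{a+b}$ to rewrite an arbitrary two-sided product $W_{z'}AW_z$ as a unimodular scalar times an expression to which the hypothesis directly applies. For $2 \Rightarrow 4$, shift-invariance gives $\alpha_{-z}(A) = W_{-z}AW_z \in \mathcal A$, and condition 2 applied with the Weyl operator $W_{z'+z}$ yields $W_{z'+z}\alpha_{-z}(A) = e^{i\sigma(z',z)/2}\,W_{z'}AW_z \in \mathcal A$, hence $W_{z'}AW_z\in\mathcal A$. The implications $3 \Rightarrow 4$ and $1 \Rightarrow 4$ are analogous: for the first I multiply $\alpha_{z'}(A)$ on the right by the single operator $W_{z+z'}$, obtaining $\alpha_{z'}(A)W_{z+z'} = e^{i\sigma(z',z)/2}\,W_{z'}AW_z$; for the second I apply a modulation $\gamma_u$ to a shift $\alpha_v(A)$, compute $\alpha_v\gamma_u(A) = e^{-i\sigma(u,v)/2}\,W_{v-u/2}AW_{-u/2-v}$, and solve $v-u/2 = z'$, $-v-u/2 = z$ for $u = -(z'+z)$, $v = (z'-z)/2$. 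In every case the only thing to monitor is the scalar phase, which is harmless because $\mathcal A$ is a linear subspace. This closes $1 \Leftrightarrow 2 \Leftrightarrow 3 \Leftrightarrow 4$.

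For condition 5 I would compute the integral kernel of $W_zA$ and of $AW_z$ directly from $W_{(x_0,\xi_0)}\phi(y) = e^{i\xi_0 y - ix_0\xi_0/2}\phi(y-x_0)$. One finds $k_{W_zA}(y,t) = e^{-ix_0\xi_0/2}e^{i\xi_0 y}k_A(y-x_0,t)$ and $k_{AW_z}(y,t) = e^{ix_0\xi_0/2}e^{i\xi_0 t}k_A(y,t+x_0)$, so that left multiplication by $W_z$ translates and modulates the kernel in its \emph{first} variable while right multiplication does so in its \emph{second} variable, each up to a global phase. Specializing $z=(x_0,0)$ and $z=(0,\xi_0)$ separates the translation from the modulation, so condition 2 is equivalent to shift- and modulation-invariance of the kernel space in the first variable, and condition 3 to the same in the second variable; together they are exactly shift- and modulation-invariance of the kernel space on $\mathbb R^{2n}$, which is condition 5. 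Here I use that the kernel map $\mathcal T^1 \hookrightarrow \mathcal T^2 \cong L^2(\mathbb R^{2n})$ is injective, so membership of an operator in $\mathcal A$ can be read off from its kernel, and again that global phases are absorbed by linearity.

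The main obstacle is bookkeeping rather than anything conceptual: the Weyl relation forces unimodular phase factors through every rearrangement, and in condition 5 one must match the operations $W_z(\cdot)$ and $(\cdot)W_z$ to the correct kernel variable and verify that ranging over all $z$ genuinely produces every translation and every modulation in that variable \emph{separately}. Once the phases are recognized as irrelevant by linearity of $\mathcal A$ and the injectivity of the kernel map is invoked, all five equivalences follow.
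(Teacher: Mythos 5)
Your proof is correct and follows essentially the same route as the paper's: both arguments use the Weyl commutation relation to identify two-sided products $W_{z'}AW_{z}$, up to a unimodular phase absorbed by linearity, with compositions of shifts and modulations (the paper's displayed identity $\alpha_{z}\gamma_{z'}A = e^{i\sigma(z',z)/2}W_{z-z'/2}AW_{-z'/2-z}$ is exactly your computation for $1)\Rightarrow 4)$), and both handle condition 5 by computing integral kernels of Weyl-multiplied operators and invoking injectivity of the kernel map. Your organization---making 4) the hub and splitting the kernel computation into the two one-sided cases---is only a cosmetic rearrangement of the paper's single computation for $W_{z'}AW_{z}$.
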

\begin{proof}
The equivalence of \ref{eq:modulation_1}--\ref{eq:modulation_4}\ follows from shift-invariance together with the observation
\begin{align*}
\gamma_{z}\alpha_{z'}(\widehat{A}) 
&=\mathcal{F}_W(\alpha_{z}\gamma_{z'}A)\\
&=\mathcal{F}_W(W_{z}W_{-z'/2}AW_{-z'/2}W_{z})\\
&=e^{i\sigma(z',z)/2}\mathcal{F}_W(W_{z-z'/2}AW_{-z'/2-z}).
\end{align*}

For the last equivalence, recall that a trace class operator can be written as 
\[A\phi(s)=\int_{\mathbb{R}^n}K_A(s,t)\phi(t)\,\mathrm{d}t,\quad \phi \in \mathcal{H}.\]
The integral kernel of $W_{z'}AW_z$ with $z = (x, \xi)$ and $z' = (x', \xi')$ is given by
\begin{equation*}
(s, t) \mapsto e^{i\xi' \cdot s - \frac{i}{2} \xi'\cdot x'}e^{i\xi\cdot t + \frac{i}{2} \xi\cdot x}K_A(s-x', t+x)=e^{ \frac{i}{2} \xi\cdot x - \frac{i}{2} \xi'\cdot x'}\gamma_{(\xi,-\xi')}\alpha_{(x',-x)}K_A(s, t).\qedhere
\end{equation*}
\end{proof}
Analogously to the case of the shift action $\alpha_x$, one also proves the following facts:
\begin{lemma}\label{lemma:modulation_cont_t1}
    $\gamma_x$ acts strongly continuously and isometrically on the Schatten classes $\mathcal T^p(\mathcal H)$ for every $1 \leq p \leq \infty$. It acts continuous in weak$^\ast$ topology on $\mathcal L(\mathcal H)$.
\end{lemma}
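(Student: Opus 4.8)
The plan is to handle the three assertions in increasing order of difficulty, reducing everything to the behaviour of $\gamma_z$ on rank-one operators. The isometry is immediate: since each Weyl operator $W_{-z/2}$ is unitary and the Schatten norm $\|\cdot\|_{\mathcal T^p}$ is invariant under left and right multiplication by unitaries (the singular values of $UAV$ coincide with those of $A$), we get $\|\gamma_z(A)\|_{\mathcal T^p} = \|W_{-z/2} A W_{-z/2}\|_{\mathcal T^p} = \|A\|_{\mathcal T^p}$ for every $1 \le p \le \infty$, and likewise $\|\gamma_z(A)\|_{\mathrm{Op}} = \|A\|_{\mathrm{Op}}$ on $\mathcal L(\mathcal H)$. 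So the real content is the continuity of $z \mapsto \gamma_z(A)$.

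For strong continuity on $\mathcal T^p$ with $1 \le p < \infty$ (and on the compact operators $\mathcal T^\infty$), I would first compute $\gamma_z$ on a rank-one operator $\phi \otimes \psi$, where $(\phi \otimes \psi)\eta = \langle \eta, \psi\rangle \phi$. Using $W_{-z/2}^\ast = W_{z/2}$ one finds
\[\gamma_z(\phi \otimes \psi) = (W_{-z/2}\phi) \otimes (W_{z/2}\psi).\]
Since $z \mapsto W_{\pm z/2}\phi$ is continuous in $\mathcal H$ (strong continuity of the Weyl operators, which follows from the explicit formula realizing $W_z$ as a composition of a translation and a modulation on $L^2$), and since $\|\phi \otimes \psi - \phi' \otimes \psi'\|_{\mathcal T^p} \le \|\phi - \phi'\|_{\mathcal H}\|\psi\|_{\mathcal H} + \|\phi'\|_{\mathcal H}\|\psi - \psi'\|_{\mathcal H}$ for every $p$, the map $z \mapsto \gamma_z(\phi \otimes \psi)$ is $\mathcal T^p$-continuous. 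By linearity this extends to all finite-rank operators. Finite-rank operators are dense in $\mathcal T^p$ for $1 \le p < \infty$ and in the compact operators for $p = \infty$, so an $\varepsilon/3$ argument finishes the job: given $A$ and a finite-rank $F$ with $\|A - F\|_{\mathcal T^p}$ small, the isometry already established controls the two outer terms $\|\gamma_z(A) - \gamma_z(F)\|_{\mathcal T^p} = \|A - F\|_{\mathcal T^p}$, while the middle term $\|\gamma_z(F) - \gamma_{z_0}(F)\|_{\mathcal T^p}$ is small by the finite-rank case.

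Finally, for weak$^\ast$ continuity on $\mathcal L(\mathcal H) = (\mathcal T^1)^\ast$, I would use the pairing $\langle A, S\rangle = \tr(AS)$ and the cyclicity of the trace to obtain the self-duality relation $\tr(\gamma_z(A) S) = \tr(A\, \gamma_z(S))$ for $A \in \mathcal L(\mathcal H)$ and $S \in \mathcal T^1$ (legitimate since $A W_{-z/2} S$ is trace class). Fixing $S$, the map $z \mapsto \gamma_z(S)$ is $\mathcal T^1$-continuous by the $p=1$ case just proved, and $B \mapsto \tr(AB)$ is a bounded functional on $\mathcal T^1$; hence $z \mapsto \tr(\gamma_z(A) S)$ is continuous for every $S$, which is exactly weak$^\ast$ continuity. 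I expect the only genuine obstacle to be the bookkeeping in the $\varepsilon/3$ argument together with confirming the strong continuity of $z \mapsto W_z\phi$; the isometry and the duality step are essentially formal. Note that norm continuity on $\mathcal L(\mathcal H)$ should not be expected, exactly as translation fails to be norm-continuous on $L^\infty$, which is why only the weak$^\ast$ statement is claimed there.
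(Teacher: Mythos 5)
Your proof is correct and follows essentially the same route as the paper's: verify the claims on rank-one operators using strong continuity of the Weyl (projective unitary) representation, extend to $\mathcal T^p(\mathcal H)$ (including the compact operators for $p=\infty$) by finite-rank approximation combined with the isometry, and obtain the weak$^\ast$ statement on $\mathcal L(\mathcal H)$ by duality with $\mathcal T^1$. The paper only sketches this argument, so your write-up merely fills in the details it omits.
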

\begin{proof}
    The proof is entirely analogous as for the action $\alpha_x$: Verify the statements first for rank one operators, using that $W_x$ is a strongly continuous projective unitary representation. Then, obtain the result on all of $\mathcal T^p(\mathcal H)$ by approximation through finite rank operators. 
\end{proof}

\section{Structure Theory of \texorpdfstring{$L^1 \oplus \mathcal T^1$}{}}\label{sec: structure theory}
In Section \ref{sec: QHA convolutions} we defined a product structure for $L^1 \oplus \mathcal T^1$, making it into an involutive commutative Banach algebra. As such we can study the closed ideals of this algebra. We will start with the study of the regular maximal ideals, i.e., the Gelfand theory, and thereafter study the closed graded ideals of $L^1 \oplus \mathcal T^1$. 

\subsection{Gelfand Theory for \texorpdfstring{$L^1 \oplus \mathcal T^1$}{}}\label{sec: gelfand}
 In this section we will study the Gelfand theory of $L^1 \oplus \mathcal T^1$. We say that an ideal $I$ is \emph{regular} if the quotient $L^1 \oplus \mathcal T^1/I$ is unital, see \cite[Sec.~D4]{rudin}. It is well known that the regular maximal ideals are given by the kernel of the multiplicative linear functionals.

\begin{proposition}\label{prop: mult char}
     The set of nonzero multiplicative linear functionals on $L^1\oplus \mathcal{T}^1$ is para\-me\-trized by $\mathbb R^{2n} \times \mathbb Z_2$ as
\begin{equation*}
    \chi_{z, j}(f, A) = \begin{cases}
        \widehat{f}(z) + \widehat{A}(z), &\quad j = 0,\\
        \widehat{f}(z) - \widehat{A}(z), &\quad j = 1.
    \end{cases}
\end{equation*}
We write the set of all nonzero multiplicative linear functionals, the \emph{Gelfand spectrum}, as $\chi_{\mathbb R^{2n} \times \mathbb Z_2}$. The Gelfand spectrum topology of $\chi_{\mathbb R^{2n} \times \mathbb Z_2}$ agrees with the standard product topology of the index set $\mathbb{R}^{2n}\times \mathbb{Z}_2$.
\end{proposition}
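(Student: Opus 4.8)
The plan is to decompose an arbitrary character into its classical and quantum parts and then exploit the four ``block'' products dictated by the convolution structure $(f,A)\ast(g,B)=(f\ast g + A\ast B,\, f\ast B + g\ast A)$. Since $L^1\oplus\mathcal T^1$ is a commutative Banach algebra, every character is automatically continuous. Given a nonzero character $\chi$, I set $\phi(f)\coloneqq\chi(f,0)$ and $\psi(A)\coloneqq\chi(0,A)$, so that $\chi(f,A)=\phi(f)+\psi(A)$ with $\phi$ a bounded functional on $L^1(\mathbb R^{2n})$ and $\psi$ a bounded functional on $\mathcal T^1$. Testing multiplicativity on the products
\begin{align*}
(f,0)\ast(g,0)&=(f\ast g,0), & (0,A)\ast(0,B)&=(A\ast B,0), & (f,0)\ast(0,B)&=(0,f\ast B),
\end{align*}
yields the three identities
\begin{align*}
\phi(f\ast g)=\phi(f)\phi(g),\qquad \phi(A\ast B)=\psi(A)\psi(B),\qquad \psi(f\ast B)=\phi(f)\psi(B).
\end{align*}
The first says that $\phi$ is either a character of $(L^1(\mathbb R^{2n}),\ast)$ or the zero functional.

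Next I would rule out the degenerate cases to force both $\phi$ and $\psi$ to be nonzero. If $\psi=0$, the middle identity forces $\phi(A\ast B)=0$ for all $A,B$; but a nonzero character $\phi$ equals $\mathcal F_\sigma(\cdot)(z)$ for some $z$, whence $\phi(A\ast B)=\widehat A(z)\widehat B(z)$ by \eqref{eq:conv_operator_operator}, which is not identically zero, while $\phi=0$ would give $\chi=0$. If instead $\phi=0$, the third identity gives $\psi(f\ast B)=0$ for all $f,B$; choosing an approximate identity $f_i$ in $L^1(\mathbb R^{2n})$ with $f_i\ast B\to B$ in $\mathcal T^1$ (which exists because $\mathcal T^1$ has a strongly shift-invariant norm) and using continuity of $\psi$ gives $\psi=0$, hence $\chi=0$. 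Therefore both are nonzero, and $\phi=\mathcal F_\sigma(\cdot)(z)$ for a unique $z\in\mathbb R^{2n}$ by the classical Gelfand theory of $L^1(\mathbb R^{2n})$.

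With $z$ fixed I would solve for $\psi$. From $\phi(A\ast B)=\psi(A)\psi(B)$ together with \eqref{eq:conv_operator_operator} we obtain $\psi(A)\psi(B)=\widehat A(z)\widehat B(z)$; fixing $B_0$ with $\psi(B_0)\ne0$ gives $\psi(A)=c\,\widehat A(z)$ with $c=\widehat{B_0}(z)/\psi(B_0)$, and substituting back forces $c^2=1$, i.e.\ $c=\pm1$ (compatibility with the third identity follows from \eqref{eq:conv_function_operator}). Hence $\chi=\chi_{z,0}$ or $\chi=\chi_{z,1}$. The converse, that each $\chi_{z,j}$ is indeed multiplicative, is a direct computation using $\mathcal F_\sigma(f\ast g)=\widehat f\widehat g$ together with \eqref{eq:conv_operator_operator} and \eqref{eq:conv_function_operator}, which factors as $(\widehat f(z)\pm\widehat A(z))(\widehat g(z)\pm\widehat B(z))$. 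Distinctness for distinct $(z,j)$ follows from injectivity of $z\mapsto\widehat f(z)$ on $L^1(\mathbb R^{2n})$ and from choosing $A$ with $\widehat A(z)\ne0$ to separate the sign $j$.

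For the topological statement I would show the bijection $(z,j)\mapsto\chi_{z,j}$ is a homeomorphism by verifying continuity in both directions with nets. Forward continuity is immediate since $z\mapsto\widehat f(z)$ and $z\mapsto\widehat A(z)$ are continuous, both lying in $C_0(\mathbb R^{2n})$ by \eqref{eq:Riemann Lebesgue}, while $\mathbb Z_2$ is discrete. Conversely, if $\chi_{z_\alpha,j_\alpha}\to\chi_{z,j}$ in the Gelfand topology, evaluating on $(f,0)$ gives $\widehat f(z_\alpha)\to\widehat f(z)$ for all $f$, which forces $z_\alpha\to z$ because the spectrum of $L^1(\mathbb R^{2n})$ is homeomorphic to $\mathbb R^{2n}$ with its usual topology; then evaluating on $(0,A)$ with $\widehat A(z)\ne0$ and using $\widehat A(z_\alpha)\to\widehat A(z)$ forces $(-1)^{j_\alpha}\to(-1)^{j}$, i.e.\ $j_\alpha=j$ eventually. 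The main obstacle is the bookkeeping in the degenerate-case elimination and in pinning down $c=\pm1$: one must ensure that $\phi\ne0$ and $\psi\ne0$ genuinely follow, which is precisely where the approximate-identity density of $L^1\ast\mathcal T^1$ in $\mathcal T^1$ and the non-vanishing of $\widehat A(z)$ for suitable $A$ are essential; the topological half is then routine given the classical identification of the spectrum of $L^1(\mathbb R^{2n})$.
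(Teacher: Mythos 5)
Your proof is correct and follows essentially the same route as the paper's: restrict the character to the two summands, apply the classical Gelfand theory of $L^1(\mathbb{R}^{2n})$ to the function part, use $\mathcal{F}_\sigma(A\ast B)=\widehat{A}\,\widehat{B}$ to pin the operator part down to $\pm\widehat{A}(z)$, and separate the two sheets of the spectrum topologically via an operator with nonvanishing Fourier--Weyl transform (the Gaussian rank-one operator). The only difference is that you spell out details the paper leaves implicit --- ruling out the degenerate cases $\phi=0$ and $\psi=0$, and verifying that the sign $c=\pm 1$ is uniform over all $A$ rather than depending on $A$ --- which are refinements of, not departures from, the paper's argument.
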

\begin{proof}
    
Let $\chi\colon  L^1 \oplus \mathcal T^1 \to \mathbb C$ be a nonzero multiplicative linear functional. Note that by using linearity, $\chi$ is determined by its actions on functions and operators separately. By the Gelfand theory of $L^1(\mathbb{R}^{2n})$, the multiplicative character $\chi$ acts on $L^1\oplus\{0\}$ by 
\begin{equation*}
    \chi(f, 0) = \widehat{f}(z) \quad \text{ for some }z \in \mathbb R^{2n}.
\end{equation*}
For $A, B \in \mathcal T^1$ we have 
\begin{align*}
    \chi(0, A) \cdot \chi(0, B) = \chi((0, A) \ast (0, B)) = \chi( A \ast B, 0) = \mathcal F_\sigma(A \ast B)(z) = \widehat{A}(z) \cdot \widehat{B}(z).
\end{align*}
Letting $A = B$, we see that
\begin{align*}
    \chi(0, A) = \pm \widehat{A}(z).
\end{align*}
Hence, we obtain that
\begin{align*}
    \chi(f, A) = \widehat{f}(z) \pm \widehat{A}(z).
\end{align*}
As one easily verifies, every such $\chi$ is a multiplicative linear functional. 

Let us now prove the equivalence of the topologies.
 A net $(z_\gamma) \subset \mathbb R^{2n}$ converges to $z \in \mathbb R^{2n}$ if and only if $\chi_{z_\gamma, j} \to \chi_{z,j}$ in the Gelfand spectrum topology. Further, it can never happen that $\chi_{z_\gamma, j} \to \chi_{z, j+1}$. If so, we would have $\widehat{A}(z) = -\widehat{A}(z)$, implying that $\widehat{A}(z) = 0$ for every $A \in \mathcal T^1$. This can not happen, since there exist operators with nonvanishing Fourier-Weyl transform, e.g., $\phi \otimes \phi$ where $\phi(x)=e^{-x^2}$.
\end{proof}
As a corollary, the maximal ideal space of $L^1\oplus \mathcal{T}^1$ consists of two disjoint copies of $\mathbb R^{2n}$. 

\begin{remark}
Recall by \eqref{eq:Riemann Lebesgue} we have \[\mathcal{F}(L^1 \oplus \mathcal T^1)\subset C_0(\mathbb R^{2n}) \oplus C_0(\mathbb R^{2n}).\] 
Using the convolution theorem, we can view the Fourier transform as a Banach algebra homomorphism
\begin{equation*}
    \mathcal F\colon  L^1 \oplus \mathcal T^1 \to \ell^1(\mathbb Z_2, C_0(\mathbb R^{2n})).
\end{equation*}
\end{remark}
Denote by 
\[\Gamma\colon L^1\oplus\mathcal{T}^1\to (\chi_{\mathbb R^{2n} \times \mathbb Z_2}\to \mathbb{C}),\]
the \emph{Gelfand representation} defined by
\[\Gamma(f,A)\chi_{z,j}\coloneqq \chi_{z,j}(f,A)=\widehat{f}(z)+(-1)^j\widehat{A}(z).\]
We will refer to the map $\Gamma(f,A)$ is the \emph{Gelfand transform} of $(f,A)$.
\begin{remark}\label{re:denseness}
    The Gelfand representation can be viewed as a map $\Gamma\colon L^1\otimes\mathcal{T}^1\to C_0(\mathbb{R}^{2n}\times \mathbb{Z}_2)$
    given by \[\Gamma(f,A)(z,j)=\widehat{f}(z)+(-1)^j\widehat{A}(z).\]
    Since both Fourier transforms have dense range, it follows that the image of the Gelfand representation is dense in $C_0(\mathbb{R}^{2n}\times \mathbb{Z}_2)$.
\end{remark}
Recall that a ring is called \emph{Jacobson semisimple} (or \emph{semiprimitive}) if the Jacobson ideal is zero, i.e., the intersection of all the maximal ideals is zero. A property of the Gelfand transform is that $\ker(\Gamma)$ is precisely the Jacobson ideal. It follows from the Gelfand-Naimark theorem that any $C^{\ast }$-algebra is Jacobson semisimple. The following result shows that $L^1 \oplus \mathcal T^1$ shares this property.
\begin{corollary}\label{cor: Jacobson semisimple}
   The algebra  $L^1 \oplus \mathcal T^1$ is Jacobson semisimple.
\end{corollary}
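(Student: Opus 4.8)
The plan is to show that the Jacobson ideal of $L^1 \oplus \mathcal T^1$ is trivial by identifying it with the kernel of the Gelfand representation $\Gamma$ and then arguing that this kernel consists only of the zero element. Recall the standard fact, already noted in the excerpt, that $\ker(\Gamma)$ equals the intersection of all regular maximal ideals, which is precisely the Jacobson radical. Thus the corollary reduces to the statement that $\Gamma$ is injective, i.e., that $\Gamma(f,A) = 0$ forces $(f,A) = (0,0)$.

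**First I would** unwind what $\Gamma(f,A) = 0$ means in terms of the explicit formula from Proposition~\ref{prop: mult char}. Vanishing of $\Gamma(f,A)$ means
\begin{equation*}
\widehat{f}(z) + (-1)^j \widehat{A}(z) = 0 \quad \text{for all } z \in \mathbb R^{2n} \text{ and } j \in \mathbb Z_2.
\end{equation*}
Taking $j=0$ and $j=1$ and adding, respectively subtracting, these two equations yields $\widehat{f}(z) = 0$ and $\widehat{A}(z) = 0$ for every $z \in \mathbb R^{2n}$. In other words, the symplectic Fourier transform $\mathcal F_\sigma(f)$ vanishes identically and the Fourier--Weyl transform $\mathcal F_W(A)$ vanishes identically.

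**Next I would** invoke the injectivity of the two transforms separately. The symplectic Fourier transform is injective on $L^1(\mathbb R^{2n})$ (it is essentially the ordinary Fourier transform composed with a linear change of variables, so this is the classical uniqueness theorem), hence $\widehat{f} \equiv 0$ gives $f = 0$. For the operator component, the excerpt records explicitly that the map $\mathcal T^1 \ni A \mapsto \mathcal F_W(A)$ is injective, so $\widehat{A} \equiv 0$ forces $A = 0$. Combining these, $(f,A) = (0,0)$, which establishes injectivity of $\Gamma$ and therefore that the Jacobson radical is zero.

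**The only subtle point** — and it is hardly an obstacle here — is ensuring the clean identification of $\ker(\Gamma)$ with the Jacobson radical in this non-unital setting, where one works with \emph{regular} maximal ideals; this is handled by the general Gelfand theory already cited in the text. Everything else is a direct consequence of the explicit parametrization in Proposition~\ref{prop: mult char} together with the injectivity of $\mathcal F_\sigma$ on $L^1(\mathbb R^{2n})$ and of $\mathcal F_W$ on $\mathcal T^1$, both of which are available from the preliminaries.
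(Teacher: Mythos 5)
Your proof is correct and follows essentially the same route as the paper: both identify the Jacobson radical with $\ker(\Gamma)$, use the two characters $\chi_{z,0}$ and $\chi_{z,1}$ to conclude $\widehat{f} \equiv 0$ and $\widehat{A} \equiv 0$, and then invoke injectivity of $\mathcal F_\sigma$ on $L^1(\mathbb R^{2n})$ and of $\mathcal F_W$ on $\mathcal T^1$. Your added remark about the non-unital setting and regular maximal ideals is a fair point of care, but it is handled exactly as you say by the standard Gelfand theory the paper cites.
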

\begin{proof}
Let $(f, A)$ be such that $\Gamma(f, A) = 0$. Then for every $z \in \mathbb R^{2n}$ we obtain 
\begin{align*}
    \widehat{f}(z) + \widehat{A}(z) = 0 = \widehat{f}(z) - \widehat{A}(z).
\end{align*}
Hence $\widehat{f}=\widehat{A} = 0  $. Since both the symplectic Fourier transform and the Fourier-Weyl transform are injective, this yields that $(f, A) = 0$. Thus, the Jacobson radical is trivial. 
\end{proof}

We say that an involutive Banach algebra 
$X$ is \emph{symmetric} if its Gelfand representation satisfies 
\[\Gamma(a^\ast )=\overline{\Gamma(a)} \quad \text{ for all } a\in X.\]
Using \eqref{eq:Fourier conjugate} we see that $L^1\oplus \mathcal{T}^1$ is symmetric since
\begin{align*}
    \Gamma((f, A)^\ast)(z, j) = \overline{\mathcal F_\sigma(f)(z)} + (-1)^j \overline{\mathcal F_W(A)(z)} = \overline{\Gamma(f, A)(z, j)}.
\end{align*}
Using that $L^1\oplus \mathcal{T}^1$ is symmetric and \cite[Prop.\ 1.14c.]{folland2016course}, we get another proof of the denseness result in Remark \ref{re:denseness}.

\subsection{Closed Ideals of \texorpdfstring{$L^1 \oplus \mathcal T^1$}{}}\label{sec:closed_ideals}
Having discussed Gelfand theory, we now turn to the task of understanding closed ideals of $L^1 \oplus \mathcal T^1$ in general. 
Recall that the closed ideals of $L^1(\mathbb{R}^{2n})$ are precisely the closed shift-invariant subspaces of $L^1(\mathbb{R}^{2n})$, see e.g., \cite[Thm.~2.45]{folland2016course}. Having a well-defined shift operator on $L^1\oplus \mathcal{T}^1$, one might hope that carries over to closed ideals in this setting. However, the subspace $\{0\}\oplus \mathcal{T}^1$ is closed and shift-invariant, but not an ideal. 
\begin{definition}
We say that a subspace $M\subset L^1\oplus \mathcal{T}^1$ is an \emph{$L^1$ module} if for all $(f,A)\in M$ and $g\in L^1(\mathbb{R}^{2n})$ we have 
\[g\ast (f,A)\coloneqq (g,0)\ast (f,A)=(g\ast f,g\ast A)\in M.\]
\end{definition}
Notice that all ideals of $L^1\oplus \mathcal{T}^1$ are $L^1$ modules.
\begin{proposition}\label{prop:L^1 module}
Let $M \subset L^1 \oplus \mathcal T^1$ be a closed subspace. Then $M$ is shift-invariant if and only if $M$ is an $L^1$ module.
\end{proposition}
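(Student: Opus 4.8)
The plan is to prove the two implications separately, using throughout that $g \ast (f,A) = (g \ast f, g \ast A)$ and $\alpha_z(f,A) = (\alpha_z f, \alpha_z A)$, and that the shift acts isometrically and strongly continuously on both $L^1(\mathbb R^{2n})$ and $\mathcal T^1$, hence on $L^1 \oplus \mathcal T^1$. The crucial structural observation, which I would isolate at the start, is that both the function-function and function-operator convolutions carry the same normalization $\tfrac{1}{(2\pi)^n}$, so that both components of $g \ast (f,A)$ are governed by the \emph{same} shift $\alpha_w$ against $g$.

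First, suppose $M$ is an $L^1$ module; I would show it is shift-invariant. Fix $(f,A) \in M$ and $z \in \mathbb R^{2n}$, and choose an approximate identity concentrated at $z$: nonnegative functions $g_\lambda \in L^1(\mathbb R^{2n})$ with $\tfrac{1}{(2\pi)^n}\int_{\mathbb R^{2n}} g_\lambda = 1$ whose supports shrink to $\{z\}$. The module property gives $g_\lambda \ast (f,A) \in M$, so by closedness of $M$ it suffices to show $g_\lambda \ast (f,A) \to \alpha_z(f,A)$ in the $L^1 \oplus \mathcal T^1$ norm. In the first component this is the standard approximate-identity convergence $g_\lambda \ast f \to \alpha_z f$ in $L^1$, using continuity of translation. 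In the second component I would write $g_\lambda \ast A = \tfrac{1}{(2\pi)^n}\int g_\lambda(w)\alpha_w(A)\,\mathrm{d}w$ and estimate
\[
\| g_\lambda \ast A - \alpha_z(A)\|_{\mathcal T^1} \leq \frac{1}{(2\pi)^n}\int_{\mathbb R^{2n}} g_\lambda(w)\,\| \alpha_w(A) - \alpha_z(A)\|_{\mathcal T^1}\,\mathrm{d}w,
\]
which tends to $0$ by the strong continuity of $w \mapsto \alpha_w(A)$ in $\mathcal T^1$ together with the shrinking supports. Hence $\alpha_z(f,A) \in M$.

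Conversely, suppose $M$ is closed and shift-invariant; I would show it is an $L^1$ module. Fix $(f,A) \in M$ and $g \in L^1(\mathbb R^{2n})$. The key step is to recognize the convolution as a single vector-valued integral,
\[
g \ast (f,A) = \frac{1}{(2\pi)^n}\int_{\mathbb R^{2n}} g(w)\,\alpha_w(f,A)\,\mathrm{d}w,
\]
interpreted as a Bochner integral in $L^1 \oplus \mathcal T^1$. This integral is well-defined because $w \mapsto \alpha_w(f,A)$ is continuous and isometric, so the integrand satisfies $\| g(w)\alpha_w(f,A)\| = |g(w)|\cdot\|(f,A)\| \in L^1$. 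By shift-invariance the integrand takes values in $M$, and the Bochner integral of an $M$-valued function lies in the closed subspace $M$ (approximate by simple functions, which are finite linear combinations of values in $M$ and hence in $M$, then pass to the limit using closedness). Therefore $g \ast (f,A) \in M$.

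I expect the convergence estimates and the Bochner-integral bookkeeping to be routine; the one genuinely load-bearing point is the identification of $g \ast (f,A)$ with the single integral $\tfrac{1}{(2\pi)^n}\int g(w)\alpha_w(f,A)\,\mathrm{d}w$ acting \emph{simultaneously} in both components, which is precisely what lets shift-invariance be converted into membership of the integral via the closed-subspace property. The main care is therefore in confirming that the matching normalizations make both coordinates of the product reduce to the same $\alpha_w$-average against $g$, after which both directions follow from standard approximation arguments.
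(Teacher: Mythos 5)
Your proof is correct and takes essentially the same approach as the paper: one direction writes $g \ast (f,A)$ as an $M$-valued Bochner integral $\frac{1}{(2\pi)^n}\int g(w)\alpha_w(f,A)\,\mathrm{d}w$ and invokes closedness, and the other uses an approximate identity concentrated at $z$, which is exactly the paper's $\alpha_z(g_t) \ast (f,A) = g_t \ast \alpha_z(f,A)$ in different clothing. Your version merely spells out the convergence estimates and the Bochner bookkeeping that the paper leaves implicit.
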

\begin{proof}
If $M$ is shift-invariant, then for every $(f, A) \in M$ and $g \in L^1$ the convolution satisfies
\begin{align*}
    g \ast (f, A) = \int_{\mathbb{R}^{2n}} g(z) \alpha_z(f,A)\,\mathrm{d}z\in M.
\end{align*}
On the other hand, if $M$ is an $L^1$ module, then letting $\{g_t\}_{t>0}$ be a normalized approximate identity of $L^1(\mathbb{R}^{2n})$ gives
\begin{equation*}
    \alpha_z(f, A) = \lim_{t \to 0} g_t \ast \alpha_z(f, A) = \lim_{t \to 0} \alpha_z(g_t) \ast (f, A) \in M. \qedhere 
\end{equation*}
\end{proof}
As a consequence, for $I$ being a closed ideal of $L^1 \oplus \mathcal T^1$, it is necessary to be shift-invariant. We continue our discussion with the following class of well-behaved ideals:
\begin{definition}
An ideal $I \subset L^1 \oplus \mathcal T^1$ is said to be \emph{graded} if $I = I_{L^1} \oplus I_{\mathcal{T}^1}$, where $I_{L^1} = I \cap (L^1(\mathbb{R}^{2n})\oplus\{0\})$ and $I_{\mathcal{T}^1} = I \cap (\{0\}\oplus \mathcal T^1)$.  
\end{definition}
We note that $I_{L^1}$ and $I_{\mathcal T^1}$ in the above definition are \emph{corresponding spaces} in the sense of Werner \cite{werner84}.

 Note that a closed ideal $I$ is a graded ideal if and only if $(f,0) \in I$ whenever $(f, A)\in I$. It follows that the regular maximal ideals 
 \begin{align*}
    I_{(z, j)} \coloneqq \{ (f, A) \in L^1 \oplus \mathcal T^1: \widehat{f}(z)  + (-1)^j \widehat{A}(z) = 0\}
\end{align*}
are never graded.
\begin{lemma}\label{lem:graded_closed_ideals}
A closed subspace $I \subset L^1 \oplus \mathcal T^1$ is a graded ideal if and only if $I = I_{L^1} \oplus \overline{(\mathcal T^1 \ast I_{L^1})}$ for some closed ideal $I_{L^1}$ in $L^1(\mathbb{R}^{2n})$.
\end{lemma}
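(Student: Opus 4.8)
I would prove the two implications separately, treating the ``easy'' inclusions by unwinding the product formula and reserving the real work for the reverse inclusion $I_{\mathcal{T}^1}\subseteq\overline{\mathcal{T}^1\ast I_{L^1}}$. Throughout, the main computational device is that \emph{every} triple convolution can be reassociated by appealing to associativity of the ambient algebra $L^1\oplus\mathcal{T}^1$: associating $(h,0)\ast(0,C)\ast(0,D)$ or $(0,A)\ast(0,C)\ast(0,D)$ in two ways yields, for $h\in L^1$ and $A,C,D\in\mathcal{T}^1$, the identities $(C\ast h)\ast D=h\ast(C\ast D)$, $\ (g\ast h)\ast C=g\ast(C\ast h)$, and $(A\ast C)\ast D=A\ast(C\ast D)$. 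I would record these at the outset, since they convert operator--operator and function--operator products back into convolutions with an $L^1$-factor that I can feed into the ideal property of $I_{L^1}$.

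For the direction ``$\Leftarrow$'', I start from a closed ideal $J$ of $L^1(\mathbb{R}^{2n})$ and set $I:=J\oplus\overline{\mathcal{T}^1\ast J}$. Closedness is immediate, as $I$ is the direct sum of a closed subspace of $L^1\oplus\{0\}$ and a closed subspace of $\{0\}\oplus\mathcal{T}^1$, and gradedness holds by construction. The content is that $I$ is an ideal: given $(f,A)\in I$ and $(g,B)\in L^1\oplus\mathcal{T}^1$, I expand $(f,A)\ast(g,B)=(f\ast g+A\ast B,\,f\ast B+g\ast A)$ and check each summand. For generators $A=C\ast h$ with $C\in\mathcal{T}^1$, $h\in J$, the reassociation identities give $A\ast B=h\ast(C\ast B)\in J$ (since $C\ast B\in L^1$ and $J$ is an ideal) and $g\ast A=(g\ast h)\ast C\in\mathcal{T}^1\ast J$, while $f\ast g\in J$ and $f\ast B\in\mathcal{T}^1\ast J$ are clear; one then passes from generators to all of $\overline{\mathcal{T}^1\ast J}$ using joint continuity of the convolutions (the Young-type bounds $\|A\ast B\|_{L^1}\le\|A\|_{\mathcal{T}^1}\|B\|_{\mathcal{T}^1}$ and $\|g\ast A\|_{\mathcal{T}^1}\le\|g\|_{L^1}\|A\|_{\mathcal{T}^1}$) together with closedness of $J$ and of $\overline{\mathcal{T}^1\ast J}$.

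For ``$\Rightarrow$'', given a graded closed ideal $I$ I put $I_{L^1}:=I\cap(L^1\oplus\{0\})$; it is a closed ideal of $L^1$ because $(g,0)\ast(f,0)=(g\ast f,0)\in I$. The inclusion $\overline{\mathcal{T}^1\ast I_{L^1}}\subseteq I_{\mathcal{T}^1}$ follows from $(f,0)\ast(0,C)=(0,f\ast C)\in I$, which places $f\ast C$ in the closed space $I_{\mathcal{T}^1}$ for every $C\in\mathcal{T}^1$. The reverse inclusion $I_{\mathcal{T}^1}\subseteq\overline{\mathcal{T}^1\ast I_{L^1}}$ is the heart of the proof. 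Fixing $A\in I_{\mathcal{T}^1}$, the identity $(0,A)\ast(0,C)=(A\ast C,0)$ shows $A\ast C\in I_{L^1}$ for all $C\in\mathcal{T}^1$, and then $A\ast(C\ast D)=(A\ast C)\ast D\in\mathcal{T}^1\ast I_{L^1}$; by linearity $A\ast h\in\operatorname{span}(\mathcal{T}^1\ast I_{L^1})$ for every $h\in\operatorname{span}(\mathcal{T}^1\ast\mathcal{T}^1)$.

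The hard part—and the only genuinely nontrivial point—is that $\mathcal{T}^1\ast\mathcal{T}^1$ is merely dense in $L^1$, not equal to it, so I cannot insert a literal $L^1$ approximate identity inside it. I would resolve this as follows. The space $\overline{\operatorname{span}(\mathcal{T}^1\ast\mathcal{T}^1)}$ is a closed ideal of $L^1(\mathbb{R}^{2n})$, and it contains $g_0:=(\phi\otimes\phi)\ast(\phi\otimes\phi)$ for $\phi(x)=e^{-x^2}$; by \eqref{eq:conv_operator_operator} its symplectic Fourier transform is $\mathcal{F}_\sigma(g_0)=\bigl(\mathcal{F}_W(\phi\otimes\phi)\bigr)^2$, which is nowhere vanishing since $\mathcal{F}_W(\phi\otimes\phi)$ is (as used in the proof of Proposition~\ref{prop: mult char}). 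By Wiener's Tauberian theorem, a closed ideal of $L^1(\mathbb{R}^{2n})$ containing a function with nowhere-vanishing Fourier transform is all of $L^1(\mathbb{R}^{2n})$, so $\operatorname{span}(\mathcal{T}^1\ast\mathcal{T}^1)$ is dense in $L^1$. Since $g\mapsto A\ast g$ is continuous from $L^1$ to $\mathcal{T}^1$ and carries this dense subspace into the closed space $\overline{\mathcal{T}^1\ast I_{L^1}}$, I obtain $A\ast g\in\overline{\mathcal{T}^1\ast I_{L^1}}$ for all $g\in L^1$. Finally, choosing a normalized approximate identity $g_t$ of $L^1$ and using strong shift-invariance of the $\mathcal{T}^1$-norm gives $A\ast g_t\to A$ in $\mathcal{T}^1$, whence $A\in\overline{\mathcal{T}^1\ast I_{L^1}}$. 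Combining both inclusions yields $I_{\mathcal{T}^1}=\overline{\mathcal{T}^1\ast I_{L^1}}$ and therefore $I=I_{L^1}\oplus\overline{\mathcal{T}^1\ast I_{L^1}}$, completing the equivalence.
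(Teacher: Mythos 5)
Your proof is correct, and while the backward direction follows the paper, the forward direction takes a genuinely different route. For ``$\Leftarrow$'' your argument coincides with the paper's: expand the product against generators $(f, C\ast h)$ with $h$ in the ideal, reassociate, and use that $I_{L^1}$ is an ideal; you are in fact more careful than the paper in passing from generators of $\mathcal T^1 \ast I_{L^1}$ to its closure via the Young-type bounds, a step the paper leaves implicit. The real divergence is in ``$\Rightarrow$'': the paper derives shift-invariance of $I_{L^1}$ and $I_{\mathcal T^1}$ from Proposition \ref{prop:L^1 module} and then cites Werner's correspondence theorem \cite[Thm.~4.1]{werner84} to conclude $I_{\mathcal T^1} = \overline{\mathcal T^1 \ast I_{L^1}}$, whereas you prove this equality from scratch: $\supseteq$ from the ideal property, and $\subseteq$ by observing $A \ast C \in I_{L^1}$ for all $C \in \mathcal T^1$, reassociating to get $A \ast \operatorname{span}(\mathcal T^1 \ast \mathcal T^1) \subseteq \operatorname{span}(\mathcal T^1 \ast I_{L^1})$, proving density of $\operatorname{span}(\mathcal T^1 \ast \mathcal T^1)$ in $L^1(\mathbb R^{2n})$ via Wiener's Tauberian theorem applied to $(\phi \otimes \phi)\ast(\phi\otimes\phi)$, and finishing with continuity of $g \mapsto A \ast g$ together with $A \ast g_t \to A$. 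In effect you reprove exactly the special case of Werner's correspondence theorem that the lemma needs. The paper's route is shorter and situates the lemma within the correspondence-space framework; yours is self-contained and makes explicit which inputs are actually required — associativity of the ambient algebra, density of $\mathcal T^1 \ast \mathcal T^1$ in $L^1$ (a fact the paper invokes without proof elsewhere), and essentiality of the $L^1$-module $\mathcal T^1$ — and both of your auxiliary facts (the Tauberian theorem and the nonvanishing of $\mathcal F_W(\phi\otimes\phi)$, cf.\ the proof of Proposition \ref{prop: mult char}) are already recorded in the paper, so the argument stays within its toolkit. One shared convention to keep in mind: $\overline{\mathcal T^1 \ast I_{L^1}}$ must be read as the closed \emph{linear span} of the products, which is how both you and the paper implicitly use it.
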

\begin{proof}
Let us first quickly check that $I = I_{L^1} \oplus \overline{(\mathcal T^1 \ast I_{L^1})}$ is indeed a closed ideal. If $(f, A \ast g) \in I$ and $(h, B) \in L^1 \oplus \mathcal T^1$, then
\begin{align*}
    (h, B) \ast (f, A \ast g) = (h \ast f + B \ast A \ast g, A \ast g\ast  h  + f \ast B).
\end{align*}
Since $f, g \in I_{L^1}$ and $I_{L^1}$ is an ideal, we have $h \ast f \in I_{L^1}$ and $B \ast A \ast g \in I_{L^1}$, where we have used that $B \ast A \in L^1(\mathbb{R}^{2n})$. Further, $A \ast g \ast h$ and $f \ast B$ are clearly contained in $\mathcal T^1 \ast I_{L^1}$, making $I$ into an ideal. The fact that $I$ is closed is inherited from $I_{L^1}$ being closed. 

On the other hand, assume that $I= I_{L^1} \oplus I_{\mathcal{T}^1}$ is a graded closed ideal of $L^1 \oplus \mathcal T^1$. 
Note that $I_{L^1}$ is a closed ideal of $L^1(\mathbb{R}^{2n})$. Using the multiplication, we get $(\{0\}\oplus \mathcal T^1) \ast I_{L^1} \subset I_{\mathcal{T}^1}$ and $(\{0\}\oplus \mathcal T^1) \ast I_{\mathcal{T}^1} \subset I_{L^1}$.  Further, as $I$ is an $L^1$ module by Proposition \ref{prop:L^1 module}, both $I_{L^1}$ and $I_{\mathcal{T}^1}$ are shift-invariant. Thus using \cite[Thm.~4.1]{werner84} gives that $I_{\mathcal{T}^1} = \overline{\mathcal T^1 \ast I_{L^1}}$, finishing the proof.
\end{proof}
\begin{remark}\label{re graded}
\begin{enumerate}[(1)]
\item For $f \in L^1(\mathbb R^{2n})$ we let $Z(f)$ denote the closed set
\begin{align*}
    Z(f) \coloneqq \{ z \in \mathbb R^{2n}: \widehat{f}(z) = 0\}.
\end{align*}
Then, for a closed ideal $I_{L^1}$ of $L^1(\mathbb R^{2n})$, we set \[Z(I_{L^1}) \coloneqq \bigcap_{f \in I_{L^1}} Z(f),\] which is closed. Malliavin's theorem \cite[Sec.~7.6]{rudin} states that spectral synthesis fails for $L^1(\mathbb R^{2n})$, that is: It is not true that every closed ideal $I_{L^1}$ is uniquely determined by $Z(I_{L^1})$. 

By analogy, for $(f, A) \in L^1 \oplus \mathcal T^1$ we let
\begin{align*}
    Z(f,A) \coloneqq \{ (z, j) \in \mathbb R^{2n} \times \mathbb Z_2: \widehat{f}(z) + (-1)^j \widehat{A}(z) = 0\}.
\end{align*}
If $I$ is a closed ideal of $L^1 \oplus \mathcal T^1$ we denote by
\[Z(I) \coloneqq \bigcap_{(f,A)\in I} Z(f,A).\] 
When $I$ is a graded, by Lemma \ref{lem:graded_closed_ideals} we get that
    $Z(I) = Z(I_{L^1} ) \times \mathbb Z_2$.
This means that for graded ideals the spectral synthesis again fails, i.e., $I $ is not uniquely determined by this set $Z(I)$.
\item\label{re maximal} Every proper closed graded ideal $I=I_{L^1} \oplus \overline{(\mathcal T^1 \ast I_{L^1})}$ is contained in a regular maximal ideal. If $Z(I) \neq \emptyset$, we have that for $(z, j) \in Z(I)$ the inclusion $I \subset I_{(z, j)}$ holds. In the case $Z(I) = \emptyset$, it remains to show that $I=L^1 \oplus \mathcal T^1$. When $Z(I) = \emptyset$, we have $Z(I_{L^1})=\emptyset$. By the Tauberian theorem on $L^1(\mathbb{R}^{2n})$, we have that $I_{L^1}=L^1(\mathbb{R}^{2n})$. The statement follows from the fact that $\overline{\mathcal{T}^1\ast L^1(\mathbb{R}^{2n})}=\mathcal{T}^1$.
\end{enumerate}
\end{remark}

Every closed ideal contains a maximal graded ideal. To illustrate this, let us consider the regular maximal ideals.
\begin{example}
For $(z, j) \in \mathbb R^{2n} \times \mathbb Z_2$, we have
\begin{align*}
    I_0 &= I_{(z, j)} \cap ( L^1(\mathbb{R}^{2n})\oplus \{0\} )= \{ f\in L^1(\mathbb{R}^{2n}): \widehat{f}(z) = 0\}\\
     I_1 &= I_{(z, j)} \cap  ( \{0\}\oplus \mathcal T^1 ) = \{ A \in \mathcal T^1: ~\widehat{A}(z) = 0\}.
\end{align*}
In this case, $I_0 \oplus I_1$ is a graded closed ideal given by
\begin{align*}
    I_0 \oplus I_1 = I_{(z, 0)} \cap I_{(z, 1)}.
\end{align*}
\end{example}
The construction of graded subideals works more generally. Denote by $J$ the map $J(f,A) \coloneqq (f, -A)$.
\begin{lemma}\label{lem: idempotent J}
Let $I$ be a closed ideal of $L^1 \oplus  \mathcal T^1$. Then
\begin{enumerate}[1)]
    \item $J(I)$ is a closed ideal of $L^1 \oplus \mathcal T^1$.
    \item $I$ is graded if and only if $J(I) = I$.
    \item\label{lem: idempotent J graded sub} $I \cap J(I)$ is a closed graded ideal.
    \item $Z(J(I)) = \{ (z, j+1) \in \mathbb R^{2n} \times \mathbb Z_2: ~(z, j) \in Z(I)\}$.
    \item $Z(I \cap J(I)) = Z(I) \cup Z(J(I)) = \{ z \in \mathbb R^{2n}: ~(z, 0) \in Z(I) \text{ or } (z, 1) \in Z(I)\} \times \mathbb Z_2$.
\end{enumerate}
\end{lemma}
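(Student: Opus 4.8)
The plan is to recognize that the map $J(f,A) = (f,-A)$ is an isometric algebra automorphism of $L^1 \oplus \mathcal{T}^1$ satisfying $J^2 = \mathrm{id}$, and then to deduce all five statements from this single structural fact together with the description of the Gelfand spectrum in Proposition \ref{prop: mult char}. First I would verify that $J$ is an algebra homomorphism: expanding both $J((f,A)\ast(g,B))$ and $J(f,A)\ast J(g,B)$, the two sign changes on the operator parts cancel in the classical component $f\ast g + A \ast B$ while a single sign survives in the mixed component $f\ast B + g\ast A$, so the two expressions agree. Since $J$ is linear, isometric (negation preserves the $\mathcal{T}^1$-norm), and an involution, it is an isometric automorphism equal to its own inverse. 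Part 1) then follows immediately: $J(I)$ is closed because $J$ is a homeomorphism, and it is an ideal because any $y$ can be written as $J(J(y))$, whence $y \ast J(x) = J(J(y) \ast x) \in J(I)$ using $J(y) \ast x \in I$.

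For part 2) I would invoke the characterization stated just before the lemma, that a closed ideal is graded precisely when $(f,0)\in I$ whenever $(f,A)\in I$. If $I = I_{L^1}\oplus I_{\mathcal{T}^1}$ is graded then negating the operator component keeps $(f,-A)$ in $I$, giving $J(I)\subseteq I$ and hence $J(I) = I$ after applying $J$ once more. Conversely, if $J(I) = I$ then for $(f,A)\in I$ we have $(f,-A)\in I$, so $(2f,0)\in I$ and thus $(f,0)\in I$, establishing gradedness. Part 3) is then formal: $I \cap J(I)$ is a closed ideal as the intersection of two, and since $J$ is a bijection with $J^2 = \mathrm{id}$ we get $J(I\cap J(I)) = J(I)\cap I = I \cap J(I)$, so part 2) makes it graded.

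The computational content lies in parts 4) and 5). For part 4) I would compute the zero set of a single element: since $\chi_{z,j}(f,-A) = \widehat{f}(z) - (-1)^j \widehat{A}(z) = \widehat{f}(z) + (-1)^{j+1}\widehat{A}(z) = \chi_{z,j+1}(f,A)$, the set $Z(f,-A)$ is the image of $Z(f,A)$ under the involution $\tau(z,j) = (z,j+1)$ on $\mathbb{R}^{2n}\times\mathbb{Z}_2$. As $\tau$ is a bijection it commutes with the intersection defining $Z(\cdot)$, yielding $Z(J(I)) = \tau(Z(I))$, which is exactly the claimed formula. For the first equality in part 5) I would prove the general hull–kernel identity $Z(I\cap K) = Z(I)\cup Z(K)$ for ideals $I,K$: the inclusion $\supseteq$ is monotonicity, while $\subseteq$ uses multiplicativity of the characters — if $\chi_{z,j}$ vanishes on neither $I$ nor $K$, choose $x\in I$, $y\in K$ with $\chi_{z,j}(x),\chi_{z,j}(y)\neq 0$, so that $x\ast y\in I\cap K$ has $\chi_{z,j}(x\ast y) = \chi_{z,j}(x)\chi_{z,j}(y)\neq 0$. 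Applying this with $K = J(I)$ and substituting part 4), the remaining set equality reduces to checking $Z(I)\cup\tau(Z(I)) = S\times\mathbb{Z}_2$, where $S = \{z : (z,0)\in Z(I)\text{ or }(z,1)\in Z(I)\}$, which I would verify by a routine fiberwise two-way containment.

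I do not anticipate a serious obstacle here, since the whole argument rests on the elementary involution $J$. The two points requiring care are the index bookkeeping in part 4) — namely that negating $A$ is dual to the flip $j\mapsto j+1$ on characters — and the use of multiplicativity to obtain the $\subseteq$ direction of the hull–union identity in part 5); everything else is formal manipulation of the automorphism $J$ and the spectrum from Proposition \ref{prop: mult char}.
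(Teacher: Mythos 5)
Your proof is correct, and since the paper explicitly leaves this lemma's proof to the reader (``straightforward and left to the reader''), your argument supplies exactly the intended verification: recognizing $J$ as an isometric involutive algebra automorphism, its duality with the spectrum flip $(z,j)\mapsto(z,j+1)$, and the hull--kernel identity $Z(I\cap K)=Z(I)\cup Z(K)$ via multiplicativity of the characters $\chi_{z,j}$ from Proposition \ref{prop: mult char}. All steps check out, including the two delicate points you flag (the index bookkeeping in part 4 and the product argument giving the inclusion $Z(I\cap J(I))\subseteq Z(I)\cup Z(J(I))$ in part 5), so there is nothing to correct.
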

The proof of this lemma is straightforward and left to the reader.
Note that the graded ideal $I \cap J(I)$ can be trivial, as the following example shows:
\begin{example}
We consider only $n = 1$, but the example can analogously be carried out for $n > 1$. We set
\begin{align*}
    \Omega_j = \{ (x, y) \in \mathbb R^2: ~(-1)^jy \geq 0\}, \quad j\in \mathbb{Z}_2.
\end{align*}
Define the closed ideal
\begin{align*}
    I \coloneqq \{ (f, A) \in L^1 \oplus \mathcal T^1: ~\widehat{f}(z) + (-1)^j \widehat{A}(z) = 0 \text{ for } z \in \Omega_j,\, j\in \mathbb{Z}_2 \}.
\end{align*}
By the definition, we have $Z(I) = (\Omega_0 \times \{ 0\}) \cup (\Omega_1 \times \{ 1\})$. 
Thus, we clearly have
\begin{align*}
    Z(I \cap J(I))=Z(\{(0,0)\}) = \mathbb R^2 \times \mathbb Z_2.
\end{align*}
\end{example}
\begin{proposition}\label{prop: zeros}
Let $I$ be a closed ideal of $L^1 \oplus\mathcal T^1$. If $Z(I) = \emptyset$, then $I = L^1 \oplus \mathcal T^1$.
\end{proposition}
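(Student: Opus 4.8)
The plan is to reduce the statement to the graded case, which has effectively already been established in Remark \ref{re graded}, part \ref{re maximal}. Given a closed ideal $I$ with $Z(I) = \emptyset$, I would pass to the graded sub-ideal $I \cap J(I)$, where $J(f,A) = (f,-A)$. By Lemma \ref{lem: idempotent J}, part 3), this intersection is again a closed graded ideal, and by part 5) its zero set is $Z(I \cap J(I)) = Z(I) \cup Z(J(I))$. Since $Z(I) = \emptyset$, part 4) gives $Z(J(I)) = \emptyset$ as well, so that $Z(I \cap J(I)) = \emptyset$.

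With a graded ideal of empty zero set in hand, I would invoke the graded case directly. By Lemma \ref{lem:graded_closed_ideals} we may write $I \cap J(I) = I_{L^1} \oplus \overline{\mathcal{T}^1 \ast I_{L^1}}$ for some closed ideal $I_{L^1} \subseteq L^1(\mathbb{R}^{2n})$, and $Z(I \cap J(I)) = \emptyset$ forces $Z(I_{L^1}) = \emptyset$. The classical Tauberian theorem for $L^1(\mathbb{R}^{2n})$ then yields $I_{L^1} = L^1(\mathbb{R}^{2n})$, and since $\overline{\mathcal{T}^1 \ast L^1(\mathbb{R}^{2n})} = \mathcal{T}^1$, we obtain $I \cap J(I) = L^1 \oplus \mathcal{T}^1$; this is precisely the computation recorded in Remark \ref{re graded}, part \ref{re maximal}. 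Finally, from the inclusions $I \cap J(I) \subseteq I \subseteq L^1 \oplus \mathcal{T}^1$ I conclude $I = L^1 \oplus \mathcal{T}^1$.

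The one step that deserves care is the claim $Z(I \cap J(I)) = \emptyset$. The assignment $I \mapsto Z(I)$ is order-reversing, so shrinking $I$ to the sub-ideal $I \cap J(I)$ can only enlarge the zero set; a priori one would expect that passing to a smaller ideal destroys emptiness of the hull. The reason emptiness persists is the nontrivial identity in Lemma \ref{lem: idempotent J}, part 5), which identifies $Z(I \cap J(I))$ exactly with the union $Z(I) \cup Z(J(I))$ rather than merely a superset of it. This is the main (indeed essentially the only) obstacle; the remainder is a direct appeal to the already-developed graded theory together with the classical Tauberian theorem on $L^1(\mathbb{R}^{2n})$.
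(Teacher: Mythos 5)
Your proof is correct and follows essentially the same route as the paper: pass to the graded sub-ideal $I \cap J(I)$, verify via Lemma \ref{lem: idempotent J} that its zero set is empty, and invoke the Tauberian argument of Remark \ref{re graded} to conclude $I \cap J(I) = L^1 \oplus \mathcal{T}^1$, hence $I = L^1 \oplus \mathcal{T}^1$. You have merely spelled out in more detail the steps the paper compresses into references, including the key identity $Z(I \cap J(I)) = Z(I) \cup Z(J(I))$.
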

\begin{proof}
We clearly have $I \cap J(I) \subset I$. By Lemma \ref{lem: idempotent J} \ref{lem: idempotent J graded sub}, $I \cap J(I)$ is a closed graded ideal with $Z(I \cap J(I)) = \emptyset$. Thus, $I \cap J(I) = L^1 \oplus \mathcal T^1$ by Remark \ref{re graded} \ref{re maximal}.
\end{proof}
\begin{corollary}
Every proper closed ideal of $L^1 \oplus \mathcal T^1$ is contained in a regular maximal ideal.
\end{corollary}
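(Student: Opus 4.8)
The plan is to deduce this directly from Proposition~\ref{prop: zeros} by contraposition. First I would record the standard fact, valid in any commutative Banach algebra, that the regular maximal ideals are exactly the kernels of the nonzero multiplicative linear functionals; combined with Proposition~\ref{prop: mult char} this identifies the regular maximal ideals of $L^1 \oplus \mathcal T^1$ with the spaces
\[
    I_{(z,j)} = \ker \chi_{z,j} = \{ (f, A) \in L^1 \oplus \mathcal T^1 : \widehat{f}(z) + (-1)^j \widehat{A}(z) = 0\},
\]
indexed by $(z,j) \in \mathbb R^{2n} \times \mathbb Z_2$.

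Next, let $I$ be a proper closed ideal of $L^1 \oplus \mathcal T^1$. The key step is the contrapositive of Proposition~\ref{prop: zeros}: since $I \neq L^1 \oplus \mathcal T^1$, the zero set $Z(I)$ cannot be empty. I would then choose any point $(z,j) \in Z(I)$. By the very definition $Z(I) = \bigcap_{(f,A) \in I} Z(f,A)$, every element $(f,A) \in I$ satisfies $\widehat{f}(z) + (-1)^j \widehat{A}(z) = 0$, which is precisely the statement that $\chi_{z,j}(f,A) = 0$. Hence $I \subset I_{(z,j)}$, and $I_{(z,j)}$ is a regular maximal ideal by the identification above, which finishes the proof.

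There is essentially no genuine obstacle at this stage, since all the substance has already been absorbed into Proposition~\ref{prop: zeros}, whose proof reduces the arbitrary closed ideal $I$ to the graded ideal $I \cap J(I)$ via Lemma~\ref{lem: idempotent J} and then applies the Tauberian theorem through Remark~\ref{re graded}~\ref{re maximal}. The only point worth stating explicitly is the translation between the emptiness of $Z(I)$ and membership in some $I_{(z,j)}$, together with the observation that each $I_{(z,j)}$ really is a regular maximal ideal; both are immediate from Proposition~\ref{prop: mult char} and require no further computation.
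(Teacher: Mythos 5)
Your proof is correct and follows exactly the paper's argument: apply Proposition~\ref{prop: zeros} in contrapositive form to get $Z(I) \neq \emptyset$, pick $(z,j) \in Z(I)$, and conclude $I \subset I_{(z,j)}$. The extra remarks identifying the $I_{(z,j)}$ as the regular maximal ideals via Proposition~\ref{prop: mult char} are already implicit in the paper's setup, so there is no substantive difference.
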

\begin{proof}
By Proposition \ref{prop: zeros}, if $I$ is proper $Z(I) \neq \emptyset$. Hence, for $(z, j) \in Z(I)$, we have $I \subset I_{(z, j)}$.
\end{proof}

\begin{corollary}
Every maximal closed ideal of $L^1 \oplus \mathcal T^1$ is regular.
\end{corollary}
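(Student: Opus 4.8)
The plan is to deduce this corollary almost immediately from the preceding corollary, which guarantees that every proper closed ideal of $L^1 \oplus \mathcal T^1$ is contained in one of the regular maximal ideals $I_{(z,j)}$. The only genuine work is to interpret the statement correctly and to verify that the $I_{(z,j)}$ are themselves proper closed ideals, so that a maximality argument can close the gap.

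First I would fix the meaning of \emph{maximal closed ideal}: a proper closed ideal $I$ that is not strictly contained in any proper closed ideal of $L^1 \oplus \mathcal T^1$. Since such an $I$ is in particular proper, the preceding corollary applies and yields some $(z,j) \in \mathbb R^{2n} \times \mathbb Z_2$ with $I \subset I_{(z,j)}$.

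Next I would record that each $I_{(z,j)} = \ker \chi_{z,j}$ is a proper closed ideal. This is because $\chi_{z,j}$ is a nonzero multiplicative linear functional by Proposition~\ref{prop: mult char}; its kernel is therefore a closed hyperplane of codimension one, hence a proper closed ideal. Consequently we have a chain $I \subset I_{(z,j)} \subsetneq L^1 \oplus \mathcal T^1$ of closed ideals in which the middle term is proper. By the maximality of $I$ among proper closed ideals, the inclusion $I \subset I_{(z,j)}$ cannot be strict, so $I = I_{(z,j)}$. Being a regular maximal ideal, $I_{(z,j)}$ is in particular regular, which proves the claim.

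I do not anticipate a real obstacle here, since the structural input has already been assembled: the classification of the multiplicative functionals (Proposition~\ref{prop: mult char}) shows the $I_{(z,j)}$ are codimension-one closed ideals, and Proposition~\ref{prop: zeros} together with the preceding corollary furnishes the containment $I \subset I_{(z,j)}$. The subtlest point worth stating carefully is simply that the regular maximal ideals are \emph{proper}, so that the maximality hypothesis on $I$ forces equality rather than permitting a strict chain up to the whole algebra.
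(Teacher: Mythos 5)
Your proof is correct and takes essentially the same approach as the paper's: both invoke the preceding corollary to place the maximal closed ideal $I$ inside some regular maximal ideal $I_{(z,j)}$ and then use maximality to force $I = I_{(z,j)}$. Your additional verification that each $I_{(z,j)}$ is a proper closed ideal (as the kernel of a nonzero multiplicative functional) is a routine detail the paper leaves implicit.
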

\begin{proof}
If $I$ is a (proper) maximal closed ideal, then it is contained in a regular maximal ideal by the previous result. By maximality, both ideals have to agree.
\end{proof}
Having looked at the maximal closed graded subideal of $I$, we now turn to the minimal closed graded ideal containing $I$.
\begin{lemma}
Let $I \subset L^1 \oplus \mathcal T^1$ be a closed ideal.
\begin{enumerate}
    \item $\overline{I + J(I)}$ is a graded ideal.
    \item $Z(\overline{I + J(I)} )= Z(I) \cap Z(J(I)) = \{ z \in \mathbb R^{2n}: ~(z, 0) \in Z(I) \text{ and } (z, 1) \in Z(I)\} \times \mathbb Z_2$.
\end{enumerate}
\end{lemma}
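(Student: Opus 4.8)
The plan is to leverage that $J$ is an isometric linear involution and an algebra automorphism, so that it respects ideals, closures, and the Gelfand transform. Recording that $J$ is linear with $J^2 = \mathrm{id}$ and $\|J(f,A)\| = \|(f,A)\|$, it is a homeomorphism of $L^1 \oplus \mathcal T^1$. By part 1) of Lemma \ref{lem: idempotent J}, $J(I)$ is a closed ideal, hence $I + J(I)$ is an ideal (a sum of two ideals) and $\overline{I + J(I)}$ is a closed ideal. To see it is graded I would use the criterion from part 2) of Lemma \ref{lem: idempotent J}: a closed ideal $K$ is graded precisely when $J(K) = K$. Because $J$ is a homeomorphism it commutes with closures, and $J(I + J(I)) = J(I) + I = I + J(I)$ by linearity and $J^2 = \mathrm{id}$; passing to closures yields $J(\overline{I + J(I)}) = \overline{I + J(I)}$, as needed.

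For the zero set I would first observe that $Z$ is insensitive to closure, i.e.\ $Z(\overline K) = Z(K)$ for any subset $K$. This holds because, for fixed $(z,j)$, the character $\chi_{z,j}$ of Proposition \ref{prop: mult char} is continuous, so its kernel $\{(f,A) : \chi_{z,j}(f,A) = 0\}$ is closed; thus $\chi_{z,j}$ annihilates $K$ if and only if it annihilates $\overline K$. Consequently $Z(\overline{I + J(I)}) = Z(I + J(I))$, and it remains to identify $Z(I + J(I))$ with $Z(I) \cap Z(J(I))$. The inclusion $\subseteq$ is immediate since $Z$ reverses inclusions and $I, J(I) \subset I + J(I)$. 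For $\supseteq$ I would invoke linearity of the Gelfand transform: any element of $I + J(I)$ is $a + b$ with $a \in I$ and $b \in J(I)$, so $\Gamma(a+b) = \Gamma(a) + \Gamma(b)$ vanishes at every point common to $Z(I)$ and $Z(J(I))$.

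Finally, to reach the explicit description I would substitute the formula for $Z(J(I))$ from part 4) of Lemma \ref{lem: idempotent J}, which presents $Z(J(I))$ as the $\mathbb Z_2$-translate $\{(z, j+1) : (z,j) \in Z(I)\}$ of $Z(I)$. A pair $(z,j)$ then lies in $Z(I) \cap Z(J(I))$ exactly when both $(z,j)$ and $(z,j+1)$ lie in $Z(I)$, that is, when both $(z,0)$ and $(z,1)$ lie in $Z(I)$; since this condition does not depend on $j$, the intersection equals $\{z : (z,0), (z,1) \in Z(I)\} \times \mathbb Z_2$. I do not anticipate a genuine obstacle, as every step is dictated by the algebraic properties of $J$; the one point deserving care is the reduction $Z(\overline K) = Z(K)$, which is where continuity of the functionals $\chi_{z,j}$ enters, bridging the gap between the non-closed ideal $I + J(I)$ and the closure appearing in the statement.
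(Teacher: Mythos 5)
Your proof is correct. The paper leaves this lemma's proof to the reader, so there is no written argument to compare against; your route---treating $J$ as an isometric algebra automorphism that commutes with closures, invoking the gradedness criterion and the formula for $Z(J(I))$ from Lemma \ref{lem: idempotent J}, and using continuity of the characters $\chi_{z,j}$ to justify $Z(\overline{K}) = Z(K)$---is precisely the straightforward argument the paper intends, with the one genuinely delicate point (passing from $I + J(I)$ to its closure) handled correctly.
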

Again, we leave the proof of the lemma to the reader.
\begin{example}
In the case of the maximal ideal $ I_{(z, j)}$, we have that \[\overline{I_{(z, j)} + J(I_{(z, j)})}=L^1\oplus \mathcal{T}^1.\]
This is a simple consequence of Proposition \ref{prop: zeros}
together with $J(I_{(z, j)})=I_{(z, j+1)}$.
\end{example}
Let us now move on to prove some general properties of closed ideals. Let $V\subset L^1\oplus \mathcal{T}^1$, then 
\[V'\coloneqq\overline{(\{0\}\oplus \mathcal T^1 ) \ast V}.\]
\begin{proposition}
Let $V \subset L^1 \oplus \mathcal T^1$ a closed, shift-invariant subspace.
\begin{enumerate}
    \item $V'$ is a closed, shift-invariant subspace.
    \item $Z(V') = Z(V)$.
    \item $V = V'' $.
    \item $\overline{V + V'}$ is a closed ideal of $L^1 \oplus \mathcal T^1$.
    \item $Z(\overline{V + V'}) = Z(V)$.
\end{enumerate}
In particular, if $I \subset L^1 \oplus \mathcal T^1$ is a closed ideal then $I = \overline{I + I'}$.
\end{proposition}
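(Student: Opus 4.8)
The plan is to prove the five parts in the order stated, since each relies on the previous one, and to isolate the single genuinely nontrivial point, which is the identity $V = V''$.

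For (1), closedness is immediate from the definition of $V'$ as a closure. For shift-invariance I would use that shifts pass through convolutions, $\alpha_z\big((0,T)\ast v\big) = (0,T)\ast \alpha_z(v)$; since $V$ is shift-invariant the right-hand side lies in $(\{0\}\oplus\mathcal{T}^1)\ast V$, and invariance extends to the closure because each $\alpha_z$ is an isometry. For (2), I would compute the Gelfand transform of a generator: multiplicativity of $\Gamma$ together with \eqref{eq:conv_operator_operator} and \eqref{eq:conv_function_operator} gives
\[
\Gamma\big((0,T)\ast v\big)(z,j) = (-1)^j\,\widehat{T}(z)\,\Gamma(v)(z,j).
\]
Hence every $w\in V'$ vanishes wherever all of $V$ vanishes, so $Z(V)\subseteq Z(V')$ (using continuity of each character $\chi_{z,j}$ to pass to the closure). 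Conversely, if $(z_0,j_0)\notin Z(V)$, choose $v\in V$ with $\Gamma(v)(z_0,j_0)\neq 0$ and take $T=\phi\otimes\phi$ with $\widehat{T}$ nowhere vanishing, as in Proposition \ref{prop: mult char}; the displayed formula then shows $(0,T)\ast v\in V'$ does not vanish at $(z_0,j_0)$, so $(z_0,j_0)\notin Z(V')$. Thus $Z(V')=Z(V)$.

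Part (3) is the crux. The inclusion $V''\subseteq V$ is formal: by associativity and the product formula,
\[
(0,S)\ast\big((0,T)\ast v\big) = \big((0,S)\ast(0,T)\big)\ast v = (S\ast T)\ast v,
\]
and $(S\ast T)\ast v = \big((S\ast T)\ast f,\ (S\ast T)\ast A\big)$ is precisely the $L^1$-module action of $S\ast T\in L^1(\mathbb{R}^{2n})$ on $v=(f,A)$. Since $V$ is closed and shift-invariant, it is an $L^1$-module by Proposition \ref{prop:L^1 module}, so these elements lie in $V$; continuity of convolution then yields $V''\subseteq V$. For the reverse inclusion the key claim is that the closed linear span $W$ of $\{S\ast T : S,T\in\mathcal{T}^1\}$ equals all of $L^1(\mathbb{R}^{2n})$. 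I would show $W$ is a closed ideal of $L^1(\mathbb{R}^{2n})$ — it is closed under convolution because $g\ast(S\ast T)=(g\ast S)\ast T$ with $g\ast S\in\mathcal{T}^1$ — and compute its zero set. As $\mathcal{F}_\sigma(S\ast T)=\widehat{S}\,\widehat{T}$, taking $S=T=\phi\otimes\phi$ with $\widehat{S}$ nowhere zero forces $Z(W)=\emptyset$, and the Tauberian theorem for $L^1(\mathbb{R}^{2n})$ (as invoked in Remark \ref{re graded}) then gives $W=L^1(\mathbb{R}^{2n})$. Granting this, I use an approximate identity $\{u_\lambda\}$ of $L^1(\mathbb{R}^{2n})$, for which $u_\lambda\ast v\to v$, to write $V=\overline{L^1(\mathbb{R}^{2n})\ast V}=\overline{W\ast V}$; since each generator $(S\ast T)\ast v=(0,S)\ast\big((0,T)\ast v\big)$ lies in $(\{0\}\oplus\mathcal{T}^1)\ast V'\subseteq V''$, we conclude $V\subseteq V''$, hence $V=V''$.

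Parts (4), (5) and the final assertion then follow quickly. For (4), $\overline{V+V'}$ is closed and shift-invariant, hence an $L^1$-module, which handles convolution by elements $(h,0)$; for the operator part, $(0,B)\ast V\subseteq V'$ while $(0,B)\ast V'\subseteq V''=V$ by (3), so $(0,B)\ast(V+V')\subseteq V+V'$, and passing to the closure shows $\overline{V+V'}$ is an ideal. For (5), continuity of the characters gives $Z(\overline{V+V'})=Z(V+V')=Z(V)\cap Z(V')$, which equals $Z(V)$ by (2). Finally, if $I$ is a closed ideal then $I$ is shift-invariant, and $I'=\overline{(\{0\}\oplus\mathcal{T}^1)\ast I}\subseteq\overline{I}=I$ since $I$ is an ideal, whence $I+I'=I$ and $\overline{I+I'}=I$. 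I expect the one real obstacle to be Part (3), and within it the identification $\overline{\operatorname{span}}\{S\ast T\}=L^1(\mathbb{R}^{2n})$; everything else reduces to formal manipulation of the convolution identities or to continuity of the Gelfand transform.
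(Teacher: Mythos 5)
Your proof is correct and takes essentially the same route as the paper's: parts (1), (4), (5) and the final claim are the same formal arguments, part (2) rests on the identity $\Gamma\bigl((0,T)\ast v\bigr)(z,j)=(-1)^j\widehat{T}(z)\,\Gamma(v)(z,j)$ together with an operator whose Fourier--Weyl transform never vanishes (your factor $(-1)^j$ even corrects the sign written in the paper's displayed formula, which is immaterial for the zero-set argument), and part (3) rests on the density of $\mathcal{T}^1\ast\mathcal{T}^1$ in $L^1(\mathbb{R}^{2n})$ combined with associativity and the $L^1$-module property from Proposition \ref{prop:L^1 module}. The only place you go beyond the paper is in proving that density claim yourself, via the Tauberian theorem applied to the closed ideal generated by the convolutions $S\ast T$, whereas the paper simply cites it as a known fact; this is a harmless, self-contained addition rather than a different method.
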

\begin{proof}
\begin{enumerate}
    \item By definition $V'$ is closed. Further, $(\{0\}\oplus\mathcal T^1 ) \ast V$ is shift-invariant, by the definition of convolution. Taking the closure preserves shift-invariance.
    \item Let $B$ be a regular operator, i.e., $\mathcal{F}_W(B)(z)$ is nonzero for all $z\in \mathbb{R}^{2n}$.
    Then by 
    \eqref{eq:conv_operator_operator} and \eqref{eq:conv_function_operator} we have that
    \begin{align*}
        \widehat{A \ast B}(z) + (-1)^j \widehat{f \ast B}(z) =- \widehat{B}(z) (\widehat{f}(z) + (-1)^{j}\widehat{A}(z)), \quad (f,A)\in V.
    \end{align*}
    Hence the result follows from the definition of $Z(V')$.
    \item This follows from the fact that $\mathcal T^1 \ast \mathcal T^1 \subset L^1$ is dense, the assumptions on $V$ and associativity of the convolutions.
    \item Clearly, $\overline{V + V'}$ is an $L^1$ module. The definition of $V'$ makes $V'$ also invariant under convolution by $\mathcal T^1$.
    \item The final property is an immediate consequence of 2.\qedhere
\end{enumerate}
\end{proof}
The above result tells us how to construct ideals of $L^1 \oplus \mathcal T^1$: Pick any closed, shift-invariant subspace $V$ of $L^1 \oplus \mathcal T^1$ and form $\overline{V + V'}$. Then, if $Z(V) \neq \emptyset$, we are guaranteed to obtain a proper closed ideal of $L^1 \oplus \mathcal T^1$. Further, any closed ideal is of this form. Finally, if we let $V \subset L^1(\mathbb{R}^{2n}) \oplus \{ 0\}$ be a closed shift-invariant subspace we obtain a graded ideal. Additionally, by Lemma \ref{lem:graded_closed_ideals} any 
closed graded ideal is of this form.

\section{Quantum Segal Algebras}\label{sec: Quantum Segal}
Moving on from the closed ideals and $L^1$ modules, we will in this section study dense $L^1$ modules endowed with a shift invariant norm.
In doing this, we will define a version of Segal algebras in $L^1\oplus \mathcal{T}^1$. 

\begin{definition}
A \emph{quantum Segal algebra (QSA)} is a pair $(QS, \| \cdot\|_{QS})$ where:
\begin{enumerate}[(QS1)]
\item \label{label:QS1} $QS$ is a dense subspace of $L^1 \oplus \mathcal T^1$.
\item \label{label:QS2} The space $(QS, \|\cdot\|_{QS})$ is a Banach algebra with multiplication inherited from $L^1 \oplus \mathcal T^1$. 
\item \label{label:QS3} The space $QS$ is shift-invariant.
\item \label{label:QS4} The shifts $\alpha_z$ are norm-isometric and continuous on $(QS, \| \cdot\|_{QS})$ for every $z\in \mathbb{R}^{2n}$.
\end{enumerate}
If, additionally, the involution satisfies $QS^\ast =QS$ and  $\|(f,A)^\ast\|_{QS}=\|(f,A)\|_{QS}$ for all $(f,A)\in QS$, then we refer to $(QS, \| \cdot\|_{QS})$ as \emph{star-symmetric}.
\end{definition}

As a first consequence of the definition, we obtain that every quantum Segal algebra continuously embeds into the ambient space:
\begin{proposition}
Let $(QS, \| \cdot\|_{QS})$ be a quantum Segal algebra. Then, there is a constant $C > 0$ such that for each $(f, A) \in QS$ we have
\begin{equation*}
    \| (f, A)\|_{L^1 \oplus \mathcal T^1} \leq C\| (f, A)\|_{QS}.
\end{equation*}
\end{proposition}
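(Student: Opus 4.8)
The plan is to exploit the two defining features of a quantum Segal algebra: that $QS$ is a Banach algebra in its own norm (QS2) on which shifts act isometrically (QS4), and that it is a dense subspace of $L^1 \oplus \mathcal{T}^1$ (QS1). The natural strategy is to produce the continuous embedding by factoring the identity map $(QS, \|\cdot\|_{QS}) \to (L^1 \oplus \mathcal{T}^1, \|\cdot\|)$ through the closed graph theorem, or, more self-containedly, to argue directly via an approximate-identity/convolution estimate mirroring the classical proof that a Segal algebra satisfies \eqref{eq:banach_ideal_bound}. I would lean toward the approximate-identity approach, since the excerpt has already established (in the proof of Proposition~\ref{prop:L^1 module}) that a normalized approximate identity $\{g_t\}_{t>0}$ of $L^1(\mathbb{R}^{2n})$ recovers elements of $QS$ via $g_t \ast (f,A) \to (f,A)$.

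First I would show that the bilinear convolution map $L^1(\mathbb{R}^{2n}) \times QS \to QS$, $(g, (f,A)) \mapsto g \ast (f,A)$, is bounded, with an estimate of the form $\|g \ast (f,A)\|_{QS} \leq \|g\|_{L^1} \|(f,A)\|_{QS}$. This follows by writing $g \ast (f,A) = \frac{1}{(2\pi)^n}\int_{\mathbb{R}^{2n}} g(z)\, \alpha_z(f,A)\,\mathrm{d}z$ as a vector-valued integral in the Banach space $(QS, \|\cdot\|_{QS})$ and using that $\|\alpha_z(f,A)\|_{QS} = \|(f,A)\|_{QS}$ by (QS4) together with the strong continuity of $z \mapsto \alpha_z(f,A)$, which makes the integrand Bochner-integrable; the triangle inequality for the integral then yields the bound. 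Next I would establish boundedness of the inclusion on the dense image $L^1 \ast QS$: for any $(f,A) \in QS$ and any fixed $g \in L^1$, the ambient norm of $g \ast (f,A)$ is controlled because $L^1 \oplus \mathcal{T}^1$ is itself a Banach algebra, giving $\|g \ast (f,A)\|_{L^1 \oplus \mathcal{T}^1} \leq \|g\|\,\|(f,A)\|_{L^1\oplus\mathcal{T}^1}$ — but this is circular, so instead I would combine the two norms through the closed graph theorem.

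The cleanest route is the closed graph theorem: the inclusion $\iota\colon (QS, \|\cdot\|_{QS}) \hookrightarrow (L^1\oplus\mathcal{T}^1, \|\cdot\|)$ is a linear map between Banach spaces, so it suffices to check its graph is closed. Suppose $(f_k, A_k) \to (f,A)$ in $\|\cdot\|_{QS}$ and $(f_k,A_k) \to (g,B)$ in $\|\cdot\|$; I must show $(f,A) = (g,B)$. Convergence in $\|\cdot\|_{QS}$ forces $QS$-convergence, hence (once the embedding is known to land in the ambient space as a set, which is immediate from (QS1)) one tests against the separating family of multiplicative functionals $\chi_{z,j}$ from Proposition~\ref{prop: mult char}, or more directly uses that $QS$-Cauchy sequences have well-defined images: the point is that both limits must agree because the $\|\cdot\|_{QS}$-limit already determines the element of $L^1\oplus\mathcal{T}^1$ up to the ambient topology once one knows a single continuous linear functional survives both. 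The main obstacle is precisely verifying this graph-closedness without circularity — specifically, ensuring that $\|\cdot\|_{QS}$-convergence implies enough control to pin down the ambient limit. I expect to resolve this by showing that the $\|\cdot\|_{QS}$-topology is finer than convergence against each Gelfand character $\chi_{z,j}$, since these characters are $\|\cdot\|_{QS}$-continuous (being restrictions of bounded functionals that factor through the algebra structure shared by $QS$ and $L^1\oplus\mathcal{T}^1$), and the family $\{\chi_{z,j}\}$ separates points of $L^1\oplus\mathcal{T}^1$ by Corollary~\ref{cor: Jacobson semisimple}; this forces $(f,A)=(g,B)$ and closes the graph, yielding the desired constant $C$.
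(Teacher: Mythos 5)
Your final (closed-graph) argument is essentially the paper's own proof: the paper packages it as a lemma stating that any algebra homomorphism from a commutative Banach algebra into a Jacobson semisimple commutative Banach algebra is automatically continuous, proved exactly by the closed graph theorem plus testing against multiplicative functionals, and then applies this to the inclusion $QS \hookrightarrow L^1 \oplus \mathcal T^1$ using Corollary~\ref{cor: Jacobson semisimple}. One wording fix: the characters $\chi_{z,j}|_{QS}$ are $\|\cdot\|_{QS}$-continuous not because they are restrictions of ambient-bounded functionals (that reasoning would be circular), but because every multiplicative linear functional on a Banach algebra --- here $(QS, \|\cdot\|_{QS})$, which carries the same multiplication by (QS2) --- is automatically bounded.
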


The proposition is a direct application of the following well-known result on continuity of Banach algebra homomorphisms.
Recall by Corollary \ref{cor: Jacobson semisimple} that $L^1 \oplus \mathcal T^1$ is Jacobson semisimple.
\begin{lemma}\label{lem:homeo_implies_cont}
Let $\mathcal A$ and $\mathcal B$ be commutative Banach algebras, where $\mathcal B$ is further assumed to be Jacobson semisimple. Then any algebra homomorphism $\varphi\colon  \mathcal A \to \mathcal B$  is continuous.
\end{lemma}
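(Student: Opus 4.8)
The plan is to prove the standard automatic-continuity result for homomorphisms into semisimple commutative Banach algebras via the closed graph theorem. First I would recall that $\mathcal B$ being Jacobson semisimple means $\bigcap_{\psi} \ker\psi = \{0\}$, where $\psi$ ranges over the multiplicative linear functionals (characters) of $\mathcal B$; this is the key structural fact, since a commutative Banach algebra's Jacobson radical coincides with the kernel of its Gelfand transform. Every character $\psi$ on $\mathcal B$ is automatically bounded (of norm at most one), a standard fact for commutative Banach algebras.

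To invoke the closed graph theorem, I would take a sequence $(a_k) \subset \mathcal A$ with $a_k \to a$ in $\mathcal A$ and $\varphi(a_k) \to b$ in $\mathcal B$, and show that $b = \varphi(a)$. The idea is to test against each character $\psi$ of $\mathcal B$. On one hand, continuity of $\psi$ gives $\psi(\varphi(a_k)) \to \psi(b)$. On the other hand, the composition $\psi \circ \varphi \colon \mathcal A \to \mathbb C$ is a multiplicative linear functional on $\mathcal A$, hence either identically zero or a character of $\mathcal A$; in either case it is automatically continuous, so $\psi(\varphi(a_k)) = (\psi\circ\varphi)(a_k) \to (\psi\circ\varphi)(a) = \psi(\varphi(a))$. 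Comparing the two limits yields $\psi(b) = \psi(\varphi(a))$, i.e.\ $\psi(b - \varphi(a)) = 0$, for every character $\psi$ of $\mathcal B$.

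Since this holds for all $\psi$ and $\mathcal B$ is Jacobson semisimple, the intersection of the kernels of all characters is trivial, so $b - \varphi(a) = 0$, that is $b = \varphi(a)$. This shows the graph of $\varphi$ is closed, and since $\mathcal A$ and $\mathcal B$ are Banach spaces, the closed graph theorem gives that $\varphi$ is continuous.

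The only subtle point, and the step I would be most careful about, is the automatic continuity of the characters: both the continuity of $\psi$ on $\mathcal B$ and the continuity of $\psi\circ\varphi$ as a functional on $\mathcal A$ rely on the general theorem that every multiplicative linear functional on a Banach algebra is bounded (the argument being that a character cannot take values outside the spectrum, which is bounded by the norm). This is precisely what lets us pass limits through $\psi$ without assuming continuity of $\varphi$ in advance, thereby avoiding circularity. Everything else is a routine application of the closed graph theorem together with the semisimplicity hypothesis.
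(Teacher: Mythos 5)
Your proof is correct and follows essentially the same route as the paper's: verify closedness of the graph by testing against characters of $\mathcal B$, use the automatic continuity of multiplicative linear functionals (both of $\psi$ and of $\psi\circ\varphi$), and conclude from Jacobson semisimplicity via the closed graph theorem. You are in fact slightly more careful than the paper in noting that $\psi\circ\varphi$ may be the zero functional rather than a character, but this changes nothing of substance.
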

Since the proof is very short, we present it for the reader's convenience.
\begin{proof}
Let $(x_n)_{n \in \mathbb N}$ be a sequence in $\mathcal A$ converging to $0 \in \mathcal A$ and $y=\lim_{n\to \infty}\varphi(x_n) $. Denote by $\mathcal M(\mathcal A)$ and $\mathcal M(\mathcal B)$ the space of multiplicative linear functionals on $\mathcal A$ and $\mathcal B$, respectively. Then, for every $\chi \in \mathcal M(\mathcal B)$ we have $\chi\circ \varphi \in \mathcal M(\mathcal A)$, hence $\chi\circ \varphi$ is continuous. Thus,
\begin{align*}
    \chi(y) = \chi(\lim_{n \to \infty} \varphi(x_n)) = \lim_{n \to \infty} \chi(\varphi(x_n)) = \chi(\varphi(0)) = 0.
\end{align*}
Hence, $\chi(y) = 0$ for every $\psi \in \mathcal M(\mathcal B)$. Since $\mathcal B$ is assumed to be Jacobson semisimple, it is $y = 0$. Thus, by the closed graph theorem, $\varphi$ is continuous.
\end{proof}
Recall that every Segal algebra is an $L^1$ module by  \eqref{eq:banach_ideal_bound}.
Let $\{g_t\}_{t>0}$ be a normalized approximate identity for $L^1(\mathbb{R}^{2n})$, then $(g_t,0)$ is an approximate identity for $L^1\oplus \mathcal{T}^1$. 
 Hence, every quantum Segal algebra $QS$ is an \emph{essential} Banach module, and the Cohen-Hewitt factorization theorem implies that
\begin{align*}
    QS = \{ g \ast (f, A): ~g \in L^1(\mathbb R^{2n}), \, (f, A) \in QS\}.
\end{align*}
As in the case for Segal algebras, see, e.g.,  \cite{dunford1974}, the property of being an essential $L^1$ module uniquely determines quantum Segal algebras among dense subalgebras of $L^1 \oplus \mathcal T^1$ in the following sense.

\begin{proposition}
Let $\mathcal A \subset L^1 \oplus \mathcal T^1$ be a dense subalgebra, such that $(\mathcal A, \| \cdot\|_{\mathcal A})$ is a Banach algebra. Then $\mathcal A$ is an essential $L^1$ Banach module if and only if it is a quantum Segal algebra.
\end{proposition}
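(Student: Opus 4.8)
The plan is to prove the two implications separately, exploiting the fact that an approximate identity for $L^1 \oplus \mathcal T^1$ can be taken of the purely classical form $(g_t, 0)$, as noted just before the statement. This reduces everything to the interplay between the $L^1$-action and the given Banach algebra norm $\|\cdot\|_{\mathcal A}$.

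For the direction that a quantum Segal algebra is an essential $L^1$ module, I would first observe that by the preceding proposition $\mathcal A$ embeds continuously into $L^1 \oplus \mathcal T^1$, and that shift-invariance together with the isometric and continuous action of $\alpha_z$ (properties \ref{label:QS3} and \ref{label:QS4}) lets me form the vector-valued integral $g \ast (f,A) = \int_{\mathbb R^{2n}} g(z)\, \alpha_z(f,A)\, \mathrm d z$ as a Bochner integral in $(\mathcal A, \|\cdot\|_{\mathcal A})$ for $g \in L^1(\mathbb R^{2n})$ and $(f,A) \in \mathcal A$. The isometry bound gives $\|g \ast (f,A)\|_{\mathcal A} \le \|g\|_{L^1}\, \|(f,A)\|_{\mathcal A}$, exactly as in the classical estimate \eqref{eq:banach_ideal_bound}, so $\mathcal A$ is an $L^1$ Banach module. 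To see it is \emph{essential}, I would apply the approximate identity $(g_t,0)$: continuity of the shift action in the $\mathcal A$-norm forces $\|g_t \ast (f,A) - (f,A)\|_{\mathcal A} \to 0$ by the standard approximate-identity argument (split the integral over a small neighborhood of the origin where $\|\alpha_z(f,A)-(f,A)\|_{\mathcal A}$ is small, and control the tail using that $g_t$ concentrates mass near $0$). Thus $\overline{L^1 \ast \mathcal A} = \mathcal A$, and Cohen–Hewitt upgrades this to $L^1 \ast \mathcal A = \mathcal A$.

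For the converse, suppose $(\mathcal A, \|\cdot\|_{\mathcal A})$ is a dense Banach subalgebra that is an essential $L^1$ module. Properties \ref{label:QS1} and \ref{label:QS2} hold by hypothesis, so I need only establish shift-invariance \ref{label:QS3} and the isometric/continuous shift action \ref{label:QS4}. The natural strategy is to use the Cohen–Hewitt factorization $(f,A) = g \ast (h,B)$ with $g \in L^1(\mathbb R^{2n})$ and $(h,B) \in \mathcal A$, and then compute $\alpha_z(f,A) = \alpha_z(g) \ast (h,B)$, which lies in $\mathcal A$ because $\mathcal A$ is an $L^1$ module; this yields shift-invariance and also continuity, since $z \mapsto \alpha_z(g)$ is continuous in $L^1$ and the module action $L^1 \times \mathcal A \to \mathcal A$ is (by the closed graph theorem or the uniform boundedness principle applied to the module maps) jointly continuous. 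The genuinely delicate point is the \emph{isometry} assertion $\|\alpha_z(f,A)\|_{\mathcal A} = \|(f,A)\|_{\mathcal A}$: an abstract essential $L^1$ module need not have an isometric shift action for its own norm. The standard resolution, following the classical Segal algebra theory, is to replace $\|\cdot\|_{\mathcal A}$ by the equivalent norm $\|(f,A)\|_{\mathcal A}^{\mathrm{new}} \coloneqq \sup_{z} \|\alpha_z(f,A)\|_{\mathcal A}$; finiteness of this supremum follows from the uniform boundedness of the shift operators (which are bounded on $\mathcal A$ by the module structure and mutually related through the group law), and one checks that this renormed space is still a Banach algebra satisfying all four axioms.

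I expect the isometry normalization to be the main obstacle, precisely because it is the one property not automatically furnished by the module structure; the key technical input is that the shift operators $\alpha_z$ form a \emph{uniformly bounded} family on $(\mathcal A, \|\cdot\|_{\mathcal A})$, so that taking the supremum over $z$ produces an equivalent norm rather than an unbounded one. I would prove uniform boundedness by combining the factorization $(f,A) = g \ast (h,B)$ with the module estimate $\|\alpha_z(f,A)\|_{\mathcal A} = \|\alpha_z(g) \ast (h,B)\|_{\mathcal A} \le C \|\alpha_z(g)\|_{L^1} \|(h,B)\|_{\mathcal A} = C\|g\|_{L^1}\|(h,B)\|_{\mathcal A}$, where the module bound constant $C$ is independent of $z$ because $\alpha_z(g)$ has the same $L^1$ norm as $g$. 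Once uniform boundedness is in hand, verifying that the renormed space satisfies \ref{label:QS2}–\ref{label:QS4} is routine, and density \ref{label:QS1} is unaffected by passing to an equivalent norm.
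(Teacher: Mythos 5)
Your first direction (quantum Segal algebra $\Rightarrow$ essential $L^1$ module) is fine and is essentially what the paper does in the paragraph preceding the proposition: the Bochner-integral estimate $\|g\ast(f,A)\|_{\mathcal A}\le\|g\|_{L^1}\|(f,A)\|_{\mathcal A}$ coming from isometry of the shifts, the standard approximate-identity argument using strong continuity, and Cohen--Hewitt.

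The converse is where you have a genuine gap, and it is concentrated exactly where you expected trouble: the isometry. The proposition asserts that $(\mathcal A,\|\cdot\|_{\mathcal A})$ \emph{with its given norm} is a quantum Segal algebra; your renorming $\|(f,A)\|_{\mathcal A}^{\mathrm{new}}\coloneqq\sup_z\|\alpha_z(f,A)\|_{\mathcal A}$ only shows that $\mathcal A$ admits an \emph{equivalent} norm satisfying \ref{label:QS1}--\ref{label:QS4}, which is a strictly weaker conclusion than the one claimed. Moreover, your justification for retreating to a renorming --- that an abstract essential $L^1$ module need not have isometric shifts for its own norm --- is false under the convention in force here, namely that the module bound is normalized: $\|g\ast(f,A)\|_{\mathcal A}\le\|g\|_{L^1}\|(f,A)\|_{\mathcal A}$. (If one allowed a module constant $C>1$, only uniform boundedness $\|\alpha_z\|\le C$ would follow and the proposition as stated would indeed fail in general; the normalized convention is what the statement presupposes.) The idea you are missing is the paper's approximate-identity sandwich, for which you already have every ingredient. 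Essentiality gives, via your own factorization argument, that $g_t\ast m\to m$ in $\|\cdot\|_{\mathcal A}$ for every $m\in\mathcal A$, where $\{g_t\}_{t>0}$ is a normalized approximate identity; since $\alpha_z(g_t)\ast m=g_t\ast\alpha_z(m)$ and $\alpha_z(m)\in\mathcal A$, one gets
\begin{align*}
\|\alpha_z(m)\|_{\mathcal A}=\lim_{t\to 0}\|\alpha_z(g_t)\ast m\|_{\mathcal A}\le\limsup_{t\to 0}\|\alpha_z(g_t)\|_{L^1}\,\|m\|_{\mathcal A}=\|m\|_{\mathcal A},
\end{align*}
because $\|\alpha_z(g_t)\|_{L^1}=\|g_t\|_{L^1}=1$. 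Applying this inequality with $-z$ to the element $\alpha_z(m)$ gives $\|m\|_{\mathcal A}=\|\alpha_{-z}\alpha_z(m)\|_{\mathcal A}\le\|\alpha_z(m)\|_{\mathcal A}$, hence equality. So the shifts are isometric in the original norm, no renorming is needed, and your closed-graph/uniform-boundedness machinery can be discarded entirely.
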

\begin{proof}
Since $\mathcal{A}$ is essential, we have that every element of $(f, A)=h\ast(g,B)$, for $h \in L^1(\mathbb R^{2n})$ and $(g, B) \in \mathcal A$.
 Hence $\mathcal A$ is shift-invariant since
\begin{align*}
    \alpha_z(f, A) = \alpha_z(h \ast (g, B)) = \alpha_z(h) \ast (g, B) \in \mathcal A \text{ for all } z\in \mathbb{R}^{2n}.
\end{align*}
Furthermore, for $z \to 0$ we have 
\begin{align*}
    \| \alpha_z(f, A) - (f, A)\|_{\mathcal A} = \| \alpha_z(h) \ast (g, B) - h \ast (g,B)\|_{\mathcal A} \leq \| \alpha_z(h) - h\|_{L^1} \| (g, B)\|_{\mathcal A} \to 0.
\end{align*}
The only thing left is showing that the shifts act isometric with respect to $\|\cdot\|_{\mathcal{A}}$. On the one hand, for $\{g_{t}\}_{t > 0} \subset L^1(\mathbb R^{2n})$ a normalized approximate identity and $(f, A) \in \mathcal A$ we have
\begin{align*}
    \| \alpha_z(f, A)\|_{\mathcal A} = \lim_{t \to 0} \| \alpha_z(g_t) \ast (f, A)\|_{\mathcal A} \leq \limsup_{t \to 0} \| \alpha_z(g_t)\|_{L^1} \| (f, A)\|_{\mathcal A} = \| (f, A)\|_{\mathcal A}.
\end{align*}
The other direction follows from applying the same argument to $ \alpha_{-z}(\alpha_{z}(f, A)) = (f, A)$.
\end{proof}

We have seen that quantum Segal algebras are $L^1$ modules, however, they are not in general $L^1 \oplus \mathcal T^1$ ideals. Remark \ref{remex} below gives an example for this. In particular, this means that quantum Segal algebras are not abstract Segal algebras in the sense of Burnham \cite{Burnham1972}. Hence we should not hope for the full ideal theorem that Segal algebras satisfy. Nevertheless, there is a weak version:
\begin{proposition}
Let $QS$ be a quantum Segal algebra.
\begin{enumerate}[(1)]
    \item For every closed ideal $I$ of $L^1 \oplus \mathcal T^1$, the set $I \cap QS$ is a closed ideal of $QS$.
    \item If $I$ is a closed ideal of $QS$, then the $L^1 \oplus \mathcal T^1$-closure of $I$ is a closed ideal of $L^1 \oplus \mathcal T^1$.  Further, $I \subseteq \overline{I} \cap QS$.
\end{enumerate}
When $QS$ is an ideal of $L^1 \oplus \mathcal T^1$, then we have $I = \overline{I} \cap QS$ in (2).
\end{proposition}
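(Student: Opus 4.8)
The plan is to handle the three assertions in turn, using throughout the continuous embedding $QS \hookrightarrow L^1 \oplus \mathcal T^1$ established in the previous proposition together with the joint (hence separate) continuity of the product in the Banach algebra $L^1 \oplus \mathcal T^1$.

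For (1), the set $I \cap QS$ is plainly a subspace, and it is an ideal of $QS$: for $(f,A) \in I \cap QS$ and $(g,B) \in QS \subset L^1 \oplus \mathcal T^1$ the product $(g,B)\ast(f,A)$ lies in $QS$ because $QS$ is an algebra, and in $I$ because $I$ is an ideal of the ambient algebra. Closedness in $\|\cdot\|_{QS}$ follows since a $QS$-convergent sequence in $I \cap QS$ also converges in the ambient norm by continuity of the embedding, and $I$ is closed there. For the first claims of (2), the inclusion $I \subseteq \overline I \cap QS$ is immediate. To see that $\overline I$ is an ideal of $L^1 \oplus \mathcal T^1$, I would first show $(L^1 \oplus \mathcal T^1)\ast I \subseteq \overline I$: given $a$ in the ambient algebra and $(f,A)\in I$, approximate $a$ in the ambient norm by a net $a_\gamma \in QS$ using density, so that $a_\gamma \ast (f,A) \in I$ (ideal of $QS$) and $a_\gamma \ast (f,A) \to a \ast (f,A)$ in the ambient norm by separate continuity; hence $a\ast(f,A)\in\overline I$. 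A second use of separate continuity, together with closedness of $\overline I$, upgrades this to $(L^1 \oplus \mathcal T^1)\ast \overline I \subseteq \overline I$, so $\overline I$ is a closed ideal.

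The substance is the final equality under the hypothesis that $QS$ is an ideal of $L^1 \oplus \mathcal T^1$, where only $\overline I \cap QS \subseteq I$ remains. The key technical input is that, when $QS$ is an ideal, for each fixed $e \in QS$ the one-sided product $y \mapsto e \ast y$ maps $L^1 \oplus \mathcal T^1$ into $QS$ and is continuous into $(QS,\|\cdot\|_{QS})$; I would prove this by the closed graph theorem, using that the embedding $QS \hookrightarrow L^1 \oplus \mathcal T^1$ and the ambient product are continuous (the analogous statement for the other factor is proved the same way). Granting this, take $(f,A)\in \overline I \cap QS$ with $(f_n,A_n)\in I$ and $(f_n,A_n)\to (f,A)$ in the ambient norm. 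For each $e \in QS$ one has $e \ast (f_n,A_n)\in I$ and $e\ast(f_n,A_n)\to e\ast(f,A)$ in $\|\cdot\|_{QS}$ by the continuity just described, so $QS$-closedness of $I$ gives $e\ast(f,A)\in I$. It then suffices to produce a net $e_t \in QS$ with $e_t \ast (f,A)\to (f,A)$ in $\|\cdot\|_{QS}$: since $QS$ is an essential $L^1$-module, the $L^1$-approximate identity satisfies $(g_t,0)\ast(f,A)\to(f,A)$ in $\|\cdot\|_{QS}$, and replacing each $(g_t,0)$ by a sufficiently close $e_t \in QS$ (density of $QS$, plus continuity of $y \mapsto y \ast (f,A)$ from the ambient norm into $\|\cdot\|_{QS}$) yields such a net. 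As each $e_t \ast (f,A)\in I$ and $I$ is $QS$-closed, we conclude $(f,A)\in I$.

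The main obstacle is precisely this last inclusion. The difficulty is that the elements witnessing $(f,A)\in\overline I$ converge only in the coarse ambient norm, whereas $I$ is closed only in the finer norm $\|\cdot\|_{QS}$, so one cannot pass to the limit directly. The resolution is the smoothing step above, and its correctness rests entirely on the closed-graph argument giving continuity of the one-sided products $L^1 \oplus \mathcal T^1 \to QS$ — which is exactly the point at which the hypothesis that $QS$ be an ideal of $L^1 \oplus \mathcal T^1$ is used, and explains why the equality may fail without it.
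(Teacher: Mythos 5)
Your proof is correct. Note that the paper does not actually write out a proof of this proposition: it defers to Burnham's Theorem~1.1 on abstract Segal algebras, stating the argument is ``nearly identical''. Your writeup is essentially that argument, adapted to the present setting: parts (1) and (2) follow from the continuous embedding $QS \hookrightarrow L^1 \oplus \mathcal T^1$, density of $QS$, and submultiplicativity of the ambient norm; the equality in the ideal case follows by first showing $e \ast (f,A) \in I$ for every $e \in QS$ and then letting a suitable net act as an approximate identity inside $QS$. The two points where you must supply something that Burnham gets for free from his axioms are handled correctly. First, Burnham's definition of an abstract Segal algebra \emph{postulates} the module bound $\| a \ast b\|_{B} \leq C \| a\|_{A} \| b\|_{B}$, which is unavailable here since quantum Segal algebras are not in general ideals; you recover the needed separate continuity of $y \mapsto e \ast y$ from the ambient norm into $\|\cdot\|_{QS}$ via the closed graph theorem, using precisely the ideal hypothesis to know the map lands in $QS$ at all. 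Second, the approximate identity $(g_t,0)$ of $L^1 \oplus \mathcal T^1$ need not lie in $QS$, so your perturbation of it into nearby elements $e_t \in QS$ (density plus that same continuity) is the right fix. Both adaptations are sound, and your closing remarks correctly isolate the ideal hypothesis as exactly what makes the final equality work and why it can fail otherwise.
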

\begin{proof}
The proof is nearly identical to the one given in 
\cite[Thm.~1.1]{Burnham1972}, and is hence omitted.
\end{proof}
We have already seen in Lemma \ref{lem:graded_closed_ideals} that graded closed ideals of $L^1\oplus \mathcal T^1$ admit a particularly simple structure. This is also true for quantum Segal algebras. For completeness, we repeat the definition, which is the same as for closed ideals: A quantum Segal algebra $QS$ is graded if $QS \cong (QS \cap (L^1 \oplus \{ 0\})) \oplus (QS \cap (\{ 0\} \oplus \mathcal T^1))$. It should be noted that not every quantum Segal algebra is graded, see Example \ref{qsa:notgraded}. 
Note that for a graded quantum Segal algebra $(QS, \|\cdot\|_{QS})$, an easy application of the open mapping theorem yields that its norm is equivalent to the sum of the subspace norms:
\begin{align*}
    c_1 (\| (f, 0)\|_{QS} + \| (0, A)\|_{QS}) \leq \| (f, A)\|_{QS} \leq c_2(\| (f, 0)\|_{QS} + \| (0, A)\|_{QS}), \quad (f, A) \in QS.
\end{align*}
Even though we do not yet know if the complete ideal theorem for Segal algebras carries over, on the level of regular maximal ideals this is true for graded quantum Segal algebras.
\begin{proposition}\label{prop: gelfand}
Let $(QS,\|\cdot\|_{QS})$ be a graded quantum Segal algebra.
\begin{enumerate}[(1)]
    \item Every $\chi \in \chi_{\mathbb{R}^{2n}\oplus \mathbb{Z}_2}$ is a multiplicative linear functional on $QS$.
    \item For every $\chi \in \mathcal M(QS)$ there is some $(z, j) \in \mathbb R^{2n} \times \mathbb Z_2$ such that $\chi = \left.\chi_{z, j}\right|_{QS}$.
    \item\label{prop: 3 nonequal} If $(z_1, j_1),\, (z_2, j_2) \in \mathbb R^{2n} \times \mathbb Z_2$ such that $\left.\chi_{z_1, j_1}\right|_{QS} = \left.\chi_{z_2, j_2}\right|_{QS}$ then $(z_1, j_1) = (z_2, j_2)$.
\end{enumerate}
    Hence, the Gelfand transform $\Gamma_{QS}$ of $QS$ is simply the restriction $\Gamma_{L^1\oplus \mathcal T^1}|_{QS}$.
\end{proposition}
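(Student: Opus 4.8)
The plan is to obtain statements (1) and (3) as soft consequences of density together with the continuous embedding $QS \hookrightarrow L^1 \oplus \mathcal T^1$, and to concentrate the real work on statement (2). For (1), each $\chi_{z,j}$ is a bounded functional on $L^1 \oplus \mathcal T^1$ by Proposition~\ref{prop: mult char}, so its restriction to $QS$ is continuous (composing with the embedding) and multiplicative; it is nonzero because a continuous functional vanishing on the dense subspace $QS$ would vanish on all of $L^1 \oplus \mathcal T^1$, contradicting $\chi_{z,j} \neq 0$. For (3), if $\left.\chi_{z_1,j_1}\right|_{QS} = \left.\chi_{z_2,j_2}\right|_{QS}$, then by density and continuity the two functionals agree on all of $L^1 \oplus \mathcal T^1$; testing against $(f,0)$ gives $\widehat f(z_1) = \widehat f(z_2)$ for every $f$, hence $z_1 = z_2$ by injectivity of $\mathcal F_\sigma$, and then testing against some $(0,A)$ with $\widehat A(z_1) \neq 0$ (for instance a Gaussian, as in Proposition~\ref{prop: mult char}) forces $j_1 = j_2$.

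For the core statement (2), I would first use the grading to write $QS = QS_0 \oplus QS_1$ with $QS_0 = QS \cap (L^1 \oplus \{0\})$ and $QS_1 = QS \cap (\{0\} \oplus \mathcal T^1)$, both closed in the $QS$-norm so that the coordinate projections are continuous. The key preliminary claim is that $QS_0$, regarded inside $L^1(\mathbb R^{2n})$, is a genuine Segal algebra: it is a complete, shift-invariant subalgebra inheriting the isometric and continuous shift action, and it is dense in $L^1(\mathbb R^{2n})$, the latter because any approximation of $(f,0)$ by elements $(f_n, A_n) \in QS$ splits, by the grading, into $(f_n,0) \in QS_0$ with $f_n \to f$ in $L^1$. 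Granting this, $\chi_0 \coloneqq \chi|_{QS_0}$ is a multiplicative functional on a Segal algebra, so by the classical identification of the Gelfand spectrum of a Segal algebra with that of $L^1$ it either vanishes or equals $f \mapsto \widehat f(z)$ for a unique $z \in \mathbb R^{2n}$.

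The remaining and most delicate step is to transfer this to the operator part $\chi_1 \coloneqq \chi|_{QS_1}$ via the mixed convolution relations, which I expect to be the main obstacle. Since $QS$ is graded, $(0,A)\ast(0,B) = (A\ast B,0) \in QS_0$ and $(f,0)\ast(0,B) = (0, f\ast B) \in QS_1$. First one rules out $\chi_0 = 0$: in that case the second relation forces $\chi$ to vanish on $\{(f,0)\ast(0,B)\}$, whose $QS$-closure is all of $QS_1$ because $QS_1$ is an essential $L^1$-module (apply the approximate identity $(g_t,0)$), so $\chi_1 = 0$ as well and $\chi \equiv 0$, contradicting $\chi \in \mathcal M(QS)$. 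Hence $\chi_0 = \widehat{\,\cdot\,}(z)$ for a unique $z$, and applying $\chi$ to the first relation together with $\mathcal F_\sigma(A\ast B) = \widehat A\,\widehat B$ yields
\[
    \chi_1(A)\,\chi_1(B) = \widehat A(z)\,\widehat B(z), \qquad (0,A),(0,B) \in QS_1.
\]
Setting $A = B$ gives $\chi_1(A) = \pm\widehat A(z)$, and the product relation rules out opposite signs on two operators with nonvanishing transform at $z$; such an operator exists in $QS_1$ since $QS_1$ is dense in $\mathcal T^1$ and $A \mapsto \widehat A(z) = \tr(AW_z)$ is $\mathcal T^1$-continuous. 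Thus the sign is a global constant $(-1)^j$, and $\chi(f,A) = \widehat f(z) + (-1)^j\widehat A(z) = \chi_{z,j}(f,A)$.

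Finally, statements (1)--(3) together assert that restriction is a bijection between $\chi_{\mathbb R^{2n}\times \mathbb Z_2} = \mathcal M(L^1\oplus\mathcal T^1)$ and $\mathcal M(QS)$. Since the Gelfand transform is evaluation against these functionals, this gives $\Gamma_{QS}(f,A)(z,j) = \chi_{z,j}(f,A) = \Gamma_{L^1\oplus\mathcal T^1}(f,A)(z,j)$ for $(f,A) \in QS$, that is, $\Gamma_{QS} = \left.\Gamma_{L^1\oplus\mathcal T^1}\right|_{QS}$.
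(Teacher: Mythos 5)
Your proof is correct and takes essentially the same route as the paper: use the grading to recognize $QS_0 = QS \cap (L^1 \oplus \{0\})$ as a genuine Segal algebra, obtain $z$ from the classical Gelfand theory of Segal algebras, transfer to the operator component through the relation $(0,A)\ast(0,B) = (A \ast B, 0)$ and the sign argument of Proposition \ref{prop: mult char}, and settle (1) and (3) by density and continuity. The one place you go beyond the paper, ruling out $\chi|_{QS_0} = 0$ (which the paper asserts without proof), is handled soundly by your essentiality argument, though it follows even faster by squaring: if $\chi|_{QS_0} = 0$ then $\chi(0,A)^2 = \chi(A \ast A, 0) = 0$ for every $(0,A) \in QS_1$, forcing $\chi \equiv 0$.
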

\begin{proof}
\begin{enumerate}[(1)]
    \item This is clear, as $QS$ and $L^1 \oplus \mathcal T^1$ have the same algebraic operations.
    \item Notice that $S=QS \cap L^1(\mathbb{R}^{2n})\oplus\{0\}$ is a Segal algebra. Let $\chi\in \mathcal M(QS)$. Then, $\chi|_{S}$ is a nonzero multiplicative linear functional on $S$. As Segal algebras have the same Gelfand theory as $L^1(\mathbb{R}^{2n})$, there is some $z \in \mathbb R^{2n}$ such that
    \begin{align*}
        \chi|_{S}(f) = \widehat{f}(z), \quad f \in S.
    \end{align*}
    From this, one concludes as in the proof of Proposition \ref{prop: mult char} that
    \begin{align*}
        \chi(f, A) = \widehat{f}(z) +(-1)^j \widehat{A}(z)=\chi_{z,j}(f,A).
    \end{align*}
    \item The assumption yields 
    \begin{equation*}
    \widehat{f}(z_1) = \chi_{z_1, j_1}(f, 0) = \chi_{z_2, j_2}(f,0) = \widehat{f}(z_2).
    \end{equation*}
    If $z_1 \neq z_2$, then there exists $f \in L^1(\mathbb R^{2n})$ such that $\widehat{f}(z_1) \neq \widehat{f}(z_2)$ and a sequence $\{f_n\}_{n\in \mathbb{N}} \subset S$ with $f_n \to f$ in $L^1(\mathbb{R}^{2n})$. Since point evaluations of the Fourier transform are continuous on $L^1(\mathbb{R}^{2n})$, we thus would obtain
    \begin{equation*}
        \widehat{f}(z_1) =\lim_{n\to \infty} \widehat{f_n}(z_1) = \lim_{n\to \infty}\widehat{f_n}(z_2) =\widehat{f}(z_2), 
    \end{equation*}
    which is a contradiction. Hence $z_1 = z_2$. Further, if $j_1 \neq j_2$, then $\widehat{A}(z_1) = 0$ for every $A \in QS \cap \{0\}\oplus\mathcal T^1$. Since there exists $B \in \mathcal T^1$ with $\widehat{B}(z_1) \neq 0$, we see that $j_1 = j_2$.\qedhere
\end{enumerate}
\end{proof}

\begin{corollary}
Let $QS$ be a graded quantum Segal algebra. Then, $QS$ is Jacobson semisimple. 
\end{corollary}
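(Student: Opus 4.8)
The plan is to reduce the statement directly to the two preceding results. Jacobson semisimplicity of $QS$ means precisely that the kernel of its Gelfand representation $\Gamma_{QS}$, which coincides with the Jacobson radical, is trivial. The decisive input is the final assertion of Proposition \ref{prop: gelfand}, which identifies $\Gamma_{QS}$ as the restriction $\Gamma_{L^1 \oplus \mathcal T^1}|_{QS}$. Thus I would not need to analyze the maximal ideal structure of $QS$ from scratch: all the relevant characters have already been pinned down as the restrictions $\chi_{z,j}|_{QS}$ with $(z,j) \in \mathbb R^{2n} \times \mathbb Z_2$.

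First I would recall, from Corollary \ref{cor: Jacobson semisimple}, that $L^1 \oplus \mathcal T^1$ is itself Jacobson semisimple, i.e.\ $\Gamma_{L^1 \oplus \mathcal T^1}$ is injective on $L^1 \oplus \mathcal T^1$. Since the restriction of an injective linear map to a subspace stays injective, and $QS \subset L^1 \oplus \mathcal T^1$, it follows immediately that $\Gamma_{QS} = \Gamma_{L^1 \oplus \mathcal T^1}|_{QS}$ is injective, so $\ker \Gamma_{QS} = \{0\}$ and $QS$ is Jacobson semisimple.

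If one prefers a self-contained argument rather than invoking injectivity of the ambient Gelfand transform as a black box, I would run the same computation as in the proof of Corollary \ref{cor: Jacobson semisimple}: suppose $(f,A) \in QS$ satisfies $\Gamma_{QS}(f,A) = 0$. Evaluating at both parity values $j = 0$ and $j = 1$ yields
\begin{align*}
    \widehat{f}(z) + \widehat{A}(z) = 0 = \widehat{f}(z) - \widehat{A}(z), \quad z \in \mathbb R^{2n},
\end{align*}
whence $\widehat{f} = \widehat{A} = 0$. Injectivity of the symplectic Fourier transform and of the Fourier--Weyl transform then forces $(f,A) = 0$.

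There is no real obstacle here, since the substantive work — establishing that $QS$ has exactly the characters $\chi_{z,j}|_{QS}$ and hence that $\Gamma_{QS}$ is a genuine restriction — was already carried out in Proposition \ref{prop: gelfand} using that $QS \cap (L^1(\mathbb R^{2n}) \oplus \{0\})$ is a Segal algebra with the same Gelfand theory as $L^1(\mathbb R^{2n})$. The only point worth flagging is that the gradedness hypothesis is essential: it is what makes Proposition \ref{prop: gelfand} applicable and thereby guarantees that we have the \emph{full} supply of characters needed to separate points, without which the kernel-triviality argument could fail.
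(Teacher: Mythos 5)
Your proof is correct and is essentially the paper's own (implicit) argument: the corollary is stated without a proof precisely because it follows at once from Proposition \ref{prop: gelfand} (identifying $\Gamma_{QS}$ with $\Gamma_{L^1\oplus\mathcal T^1}|_{QS}$) together with the injectivity of the ambient Gelfand transform from Corollary \ref{cor: Jacobson semisimple}. One small caveat on your closing remark: semisimplicity itself only requires that the restrictions $\chi_{z,j}|_{QS}$ \emph{are} nonzero characters of $QS$ and separate its points (which holds for any quantum Segal algebra, by density and continuity); the gradedness hypothesis is really what rules out \emph{extra} characters, i.e., it is needed for the full identification of the Gelfand theories in Proposition \ref{prop: gelfand}, not for the kernel-triviality argument to go through.
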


\section{Examples of Quantum Segal Algebras}\label{sec: examples of QSA}
While we know that $L^{1} \oplus \mathcal{T}^{1}$ is a quantum Segal algebra, we do not yet have any nontrivial examples. In this section we will look at several examples of quantum Segal algebras.
\subsection{Induced Quantum Segal Algebras}
We say that $A \in \mathcal{T}^{1}$ is a \emph{regular operator} if it  satisfies $\mathcal{F}_{W}(A)(z) \neq 0$ for all $z \in \mathbb{R}^{2n}$. Using this, we have the following construction.
\begin{definition}
Let $(S, \|\cdot\|_{S})$ denote a Segal algebra and fix a regular operator $A \in \mathcal{T}^{1}$. Define the subspace $S^{A} \subset L^{1} \oplus \mathcal{T}^{1}$ as 
\[S^{A} \coloneqq \left\{(f, g \ast A)  :  f, g \in S \right\},\] 
and the norm on $S^{A}$ by 
\begin{equation} \label{eq:norm_on_quantum_feichtinger_algebra}
    \|(f, g \ast A)\|_{S^{A}} \coloneqq \|f\|_{S} + \|A\|_{\mathcal{T}^{1}}\|g\|_{S}.
\end{equation}
We say that $(S^{A}, \|\cdot\|_{S^{A}})$ is the \emph{induced quantum Segal algebra} of $(S, \|\cdot\|_{S})$ and $A$.
\end{definition}
The reason we need to require that $A \in \mathcal{T}^{1}$ is regular will be clear from the proof of the following result.

\begin{theorem}
\label{thm: induced_segal_algebra}
The induced quantum Segal algebra $(S^{A}, \|\cdot\|_{S^{A}})$ of $(S, \|\cdot\|_{S})$ and $A$ is a quantum Segal algebra. Moreover, in the case that $(S, \|\cdot\|_{S})$ is star-symmetric and $A^{\ast_{\textrm{QHA}}} = A$ we have that $(S^{A}, \|\cdot\|_{S^{A}})$ is star-symmetric.
\end{theorem}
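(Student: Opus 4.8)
The plan is to verify the four axioms \ref{label:QS1}--\ref{label:QS4} in turn, but first one must check that the norm \eqref{eq:norm_on_quantum_feichtinger_algebra} is well defined, and this is where regularity of $A$ enters for the first time. If $(f, g \ast A) = (f, g' \ast A)$, then $(g - g') \ast A = 0$, so by \eqref{eq:conv_function_operator} we get $\mathcal F_\sigma(g - g') \cdot \mathcal F_W(A) \equiv 0$; since $\mathcal F_W(A)$ is nowhere vanishing and $\mathcal F_\sigma$ is injective, this forces $g = g'$. Hence each element of $S^A$ has a unique representation, and the linear bijection $\Phi \colon S \oplus S \to S^A$, $\Phi(f, g) = (f, g \ast A)$, is isometric once $S \oplus S$ is equipped with the norm $\|f\|_S + \|A\|_{\mathcal T^1} \|g\|_S$. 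As this norm is equivalent to the product norm on the Banach space $S \oplus S$ (recall $\|A\|_{\mathcal T^1} > 0$), the space $(S^A, \|\cdot\|_{S^A})$ is automatically complete.

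The substantive part is the density axiom \ref{label:QS1}. Density in the first coordinate is immediate from density of $S$ in $L^1(\mathbb R^{2n})$, so the real content is that $\{ g \ast A : g \in S\}$ is dense in $\mathcal T^1$. Since $S$ is dense in $L^1(\mathbb R^{2n})$ and $g \mapsto g \ast A$ is continuous from $L^1$ into $\mathcal T^1$, it suffices to prove $\overline{L^1(\mathbb R^{2n}) \ast A} = \mathcal T^1$. I would argue by duality: if $T \in \mathcal L(\mathcal H)$ annihilates $L^1 \ast A$, then $\int_{\mathbb R^{2n}} g(z)\, \tr(T \alpha_z(A))\, \mathrm dz = 0$ for all $g \in L^1(\mathbb R^{2n})$, whence $\tr(T \alpha_z(A)) = 0$ for every $z$. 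Rewriting this as the operator-operator convolution $(T \ast PAP)(z) = 0$ and applying \eqref{eq:conv_operator_operator} gives $\mathcal F_W(T) \cdot \mathcal F_W(PAP) \equiv 0$; since $\mathcal F_W(PAP)(z) = \tr(A W_{-z}) = \mathcal F_W(A)(-z)$ is nowhere vanishing by regularity, this should force $\mathcal F_W(T) = 0$, hence $T = 0$, and density then follows from Hahn--Banach. This is the step I expect to be the main obstacle: it is the only place regularity is used essentially, and passing from ``$\mathcal F_W(PAP)$ nowhere vanishing'' to ``$T = 0$'' is a Wiener--Tauberian-type statement in quantum harmonic analysis that must be justified with care, since $\mathcal F_W(PAP)$ is merely continuous and the distributional product calls for a regularization argument rather than naive division.

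For axiom \ref{label:QS2} it remains to show the product stays in $S^A$ and the norm is submultiplicative. Using commutativity and associativity of the convolutions, the product of $(f_1, g_1 \ast A)$ and $(f_2, g_2 \ast A)$ has first coordinate $f_1 \ast f_2 + g_1 \ast g_2 \ast (A \ast A)$ and second coordinate $(f_1 \ast g_2 + f_2 \ast g_1) \ast A$. Here $f_1 \ast f_2$ and $g_1 \ast g_2$ lie in $S$ because $S$ is a Banach algebra, while $g_1 \ast g_2 \ast (A \ast A) \in S$ because $A \ast A \in L^1(\mathbb R^{2n})$ and $S$ is an $L^1$-ideal, so the product indeed lies in $S^A$. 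Estimating each term by the algebra bound on $S$, the ideal bound \eqref{eq:banach_ideal_bound}, and $\|A \ast A\|_{L^1} \leq \|A\|_{\mathcal T^1}^2$, the four resulting summands reassemble exactly into $\|(f_1, g_1 \ast A)\|_{S^A}\, \|(f_2, g_2 \ast A)\|_{S^A}$ after the usual rescaling of $\|\cdot\|_S$ that makes these constants equal to $1$. The shift axioms \ref{label:QS3} and \ref{label:QS4} then follow immediately from the identity $\alpha_z(g \ast A) = (\alpha_z g) \ast A$: it gives $\alpha_z(f, g \ast A) = (\alpha_z f, (\alpha_z g) \ast A) \in S^A$, while isometry and strong continuity transfer directly from $S$ through $\|\alpha_z(f, g \ast A) - (f, g \ast A)\|_{S^A} = \|\alpha_z f - f\|_S + \|A\|_{\mathcal T^1} \|\alpha_z g - g\|_S$.

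Finally, for the star-symmetric claim, assume $S$ is star-symmetric and $A^{\ast_{\textrm{QHA}}} = A$. By \eqref{eq:involution function operator} one has $(g \ast A)^{\ast_{\textrm{QHA}}} = g^\ast \ast A^{\ast_{\textrm{QHA}}} = g^\ast \ast A$, so $(f, g \ast A)^\ast = (f^\ast, g^\ast \ast A) \in S^A$, giving $(S^A)^\ast = S^A$; and since the involution is isometric on $S$, we get $\|(f, g \ast A)^\ast\|_{S^A} = \|f^\ast\|_S + \|A\|_{\mathcal T^1}\|g^\ast\|_S = \|(f, g \ast A)\|_{S^A}$, as required.
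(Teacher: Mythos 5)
Your decomposition (well-definedness of the norm and completeness, then \ref{label:QS1}/\ref{label:QS2}, then \ref{label:QS3}/\ref{label:QS4}, then star-symmetry) is the same as the paper's, and those parts of your argument are correct and essentially identical to the paper's three lemmas. The one genuine gap is exactly where you flagged it: the density of $L^1(\mathbb{R}^{2n}) \ast A$ in $\mathcal{T}^1$. Your duality reduction is fine as far as it goes --- a functional $T \in \mathcal{L}(\mathcal{H}) = (\mathcal{T}^1)^*$ annihilating $L^1 \ast A$ satisfies $\tr(T\alpha_z(A)) = 0$ for all $z$ --- but the passage from there to $T = 0$ is not a technicality that a regularization argument will patch. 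First, the product $\mathcal{F}_W(T)\cdot \mathcal{F}_W(PAP)$ is not defined in the generality you need: the convolution theorem \eqref{eq:conv_operator_operator} extends to $\mathcal S'(\mathcal H)$ only when the other factor is a Schwartz operator, whereas here $A$ is merely trace class, so $\mathcal{F}_W(PAP)$ is merely a $C_0$ function by the Riemann--Lebesgue lemma, and a tempered distribution cannot be multiplied by a function that is only continuous. Second, even granting a meaning to the product, ``nowhere vanishing'' does not permit division: $1/\mathcal{F}_W(A)$ is neither smooth nor of controlled growth, so one cannot conclude $\mathcal{F}_W(T)=0$ locally, let alone globally. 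What you are trying to establish is precisely the quantum analogue of Wiener's Tauberian theorem; already in the classical case (translates of $f$ span a dense subspace of $L^1$ iff $\widehat{f}$ never vanishes) the naive duality-plus-division argument fails for the same reasons, and the actual proof requires genuinely different machinery.

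The paper does not prove this step either: it invokes it as a known result of Werner, citing \cite{werner84}, where the Wiener approximation theorem for quantum harmonic analysis is established (by transferring the classical Tauberian theorem through the correspondence theory). So your proof becomes complete, and matches the paper's, once you replace your sketched duality argument for \ref{label:QS1} by a citation of that theorem --- or by an actual proof of it, which is a substantial undertaking rather than a routine verification. Everything else you wrote (injectivity of $g \mapsto g \ast A$ from regularity, completeness via the isometry with $S \oplus S$, the product computation and submultiplicativity for \ref{label:QS2}, the shift axioms, and the star-symmetry argument via \eqref{eq:involution function operator}) is correct and is how the paper argues.
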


The final claim regarding star-symmetry follows immediately from \eqref{eq:involution operator operator} and \eqref{eq:involution function operator}. However, one can give the precise criterion for when $S^A$ is star-symmetric. We quickly discuss this before turning to the proof of Theorem \ref{thm: induced_segal_algebra}:
\begin{proposition}
Let $(S^A, \|\cdot\|_{S^A})$ be an induced quantum Segal algebra. Then it is star-symmetric if and only if the Segal algebra $S$ is star-symmetric and closed under convolution by the tempered distribution $\phi$ defined by
\begin{equation*}
    \varphi = \mathcal F_\sigma( \overline{\mathcal F_W(A)}/\mathcal F_W(A)).
\end{equation*}
\end{proposition}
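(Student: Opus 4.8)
The plan is to translate the involution on $S^A$ into a multiplication on the Fourier--Weyl side, where regularity of $A$ lets us divide by $\mathcal F_W(A)$. First I would compute the involution of a generic element: for $(f, g\ast A)\in S^A$, the definition of the involution on $L^1\oplus\mathcal T^1$ together with \eqref{eq:involution function operator} gives
\[ (f, g\ast A)^\ast = \bigl(f^\ast,\ g^\ast \ast A^{\ast_{\textrm{QHA}}}\bigr). \]
The crucial observation is that $\varphi$ is designed precisely so that $\varphi \ast A = A^{\ast_{\textrm{QHA}}}$: applying $\mathcal F_W$ and using \eqref{eq:conv_function_operator} and \eqref{eq:Fourier conjugate},
\[ \mathcal F_W(\varphi \ast A) = \mathcal F_\sigma(\varphi)\,\mathcal F_W(A) = \frac{\overline{\mathcal F_W(A)}}{\mathcal F_W(A)}\,\mathcal F_W(A) = \overline{\mathcal F_W(A)} = \mathcal F_W\bigl(A^{\ast_{\textrm{QHA}}}\bigr), \]
where the division is legitimate exactly because $A$ is regular; injectivity of $\mathcal F_W$ then yields $\varphi\ast A = A^{\ast_{\textrm{QHA}}}$. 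By associativity of the convolutions this puts the involution into the canonical form of $S^A$,
\[ (f, g\ast A)^\ast = \bigl(f^\ast,\ (g^\ast \ast \varphi)\ast A\bigr), \]
so the unique $S$-representative of the operator part of $(f,g\ast A)^\ast$ is $g^\ast\ast\varphi$ (uniqueness again from regularity of $A$, which makes $g\mapsto g\ast A$ injective).

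With this normal form the membership statement $(S^A)^\ast = S^A$ decouples. Since $f$ and $g$ range independently over $S$, the involution lands in $S^A$ for every element if and only if $f^\ast\in S$ for all $f\in S$ and $g^\ast\ast\varphi\in S$ for all $g\in S$. The first is exactly the membership half of star-symmetry of $S$; granting it, $g\mapsto g^\ast$ is a bijection of $S$, so the second is equivalent to $h\ast\varphi\in S$ for all $h\in S$, i.e.\ $S$ is closed under convolution by $\varphi$. Specializing to $g=0$ and to $f=0$ gives the converse, so these two requirements on $S$ are necessary and sufficient for $(S^A)^\ast = S^A$. I would also record here that star-symmetry of $S$ plus closedness under $\ast\varphi$ forces closedness under $\ast\varphi^\ast$ — apply the involution to $h^\ast\ast\varphi\in S$ and use $(h^\ast\ast\varphi)^\ast = h\ast\varphi^\ast$ — which, together with $\varphi^\ast\ast\varphi = (2\pi)^n\delta_0$ (immediate from $|\mathcal F_\sigma(\varphi)|\equiv 1$), shows that $T_\varphi\colon g\mapsto g\ast\varphi$ is a bijection of $S$ with inverse $T_{\varphi^\ast}$.

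It remains to match the norm-isometry half of star-symmetry. From the normal form and the explicit norm \eqref{eq:norm_on_quantum_feichtinger_algebra},
\[ \|(f, g\ast A)^\ast\|_{S^A} = \|f^\ast\|_S + \|A\|_{\mathcal T^1}\,\|g^\ast\ast\varphi\|_S, \]
to be compared with $\|f\|_S + \|A\|_{\mathcal T^1}\|g\|_S$. Varying $f$ and $g$ separately, the isometry $\|(f,g\ast A)^\ast\|_{S^A} = \|(f,g\ast A)\|_{S^A}$ holds for all elements precisely when $\|f^\ast\|_S = \|f\|_S$ (the isometric half of star-symmetry of $S$) and $\|g^\ast\ast\varphi\|_S = \|g\|_S$ for every $g$, the latter meaning — through the bijection $g\mapsto g^\ast$ — that $T_\varphi$ acts isometrically on $S$.

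The main obstacle is therefore this last point: promoting ``$S$ is closed under $\ast\varphi$'' to ``$T_\varphi$ is isometric on $S$.'' Automatic continuity (the closed-graph reasoning behind Lemma~\ref{lem:homeo_implies_cont}, using that $L^1$-convergence forces uniform convergence of symplectic Fourier transforms and that $\mathcal F_\sigma(\varphi)$ is unimodular) shows $T_\varphi$ is bounded and bijective on $S$ with inverse $T_{\varphi^\ast}$. Writing $J$ for the isometric involution of $S$, the identity $(g\ast\varphi)^\ast = g^\ast\ast\varphi^\ast$ reads $J T_\varphi = T_\varphi^{-1} J$, so $\|T_\varphi^k\| = \|T_\varphi^{-k}\|$ for all $k$; this unitary symmetry, powered by the unimodular symbol, is the structural input I would push to obtain norm-preservation. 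This is the delicate step — it is exactly where the regularity of $A$ (nonvanishing $\mathcal F_W(A)$, hence a globally defined unimodular symbol $\overline{\mathcal F_W(A)}/\mathcal F_W(A)$) and the precise interpretation of ``closed under convolution by $\varphi$'' must be brought to bear, whereas the membership reduction and the bookkeeping above are routine consequences of the identity $\varphi\ast A = A^{\ast_{\textrm{QHA}}}$.
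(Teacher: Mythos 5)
Your treatment of the membership question is exactly the paper's proof: the paper solves $(f_1,g_1\ast A)^\ast=(f_2,g_2\ast A)$ by applying $\mathcal F_W$ to the operator component, invoking \eqref{eq:Fourier conjugate}, and dividing by the nowhere-vanishing $\mathcal F_W(A)$ to obtain $g_2=g_1^\ast\ast\mathcal F_\sigma^{-1}\bigl(\overline{\mathcal F_W(A)}/\mathcal F_W(A)\bigr)$; your formulation via $\varphi\ast A=A^{\ast_{\mathrm{QHA}}}$, associativity, and injectivity of $g\mapsto g\ast A$ is the same computation, and that part of your argument is complete and correct.

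The ``delicate step'' you flag --- upgrading ``$S$ is closed under $\ast\varphi$'' to ``$T_\varphi\colon g\mapsto g\ast\varphi$ is isometric on $S$,'' which the norm half of star-symmetry requires --- is not a defect of your argument but a gap in the paper itself. The paper's definitions of star-symmetry (both for $S$ and for a quantum Segal algebra) include isometry of the involution, yet its proof of this proposition stops at the membership equation and never mentions norms. Moreover, the structural identities you derive cannot close this gap: $JT_\varphi=T_\varphi^{-1}J$ only yields $\|T_\varphi\|=\|T_\varphi^{-1}\|$, and a bounded invertible operator with $\|T\|=\|T^{-1}\|$ need not be an isometry (consider $\mathrm{diag}(2,1/2)$ on a two-dimensional space). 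Closedness of $S$ under $\ast\varphi$ gives only boundedness of $T_\varphi$, via the closed-graph argument you sketch (cf.\ Lemma~\ref{lem:homeo_implies_cont}); nothing forces norm preservation --- for instance, if $\mathcal F_\sigma(\varphi)$ is a quadratic chirp and $S=\mathcal S_0(\mathbb R^{2n})$, then $T_\varphi$ preserves $S$ but there is no reason for it to preserve the $\mathcal S_0$-norm. So the clean statement one can actually prove, and the one both the paper and your first two paragraphs establish, is the equivalence at the level of the set condition $(S^A)^\ast=S^A$; full isometric star-symmetry of $S^A$ is instead equivalent (by your own forward-direction computation, using the explicit norm \eqref{eq:norm_on_quantum_feichtinger_algebra}) to the stronger hypothesis that $T_\varphi$ is an isometric bijection of $S$. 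You should record that strengthened hypothesis rather than spend further effort trying to derive isometry from mere closedness, since that implication fails in general.
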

\begin{proof}
We need to be able to solve the equation
\begin{equation*}
    (f_1, g_1 \ast A)^\ast = (f_2, g_2 \ast A)
\end{equation*}
for $f_2, g_2 \in S$ whenever $f_1, g_1 \in S$ are given. Clearly, $f_2=f_1^\ast$ making $S$ star-symmetric. Applying $\mathcal F_W$ to the second component yields
\begin{equation*}
    \mathcal F_\sigma(g_1^\ast) \mathcal F_W(A^{\ast_{\textrm{QHA}}}) = \mathcal F_\sigma(g_1^\ast) \overline{\mathcal F_W(A)} = \mathcal F_\sigma(g_2) \mathcal F_W(A).
\end{equation*}
Since $\mathcal F_W(A)$ is nowhere zero, we can solve this for
\begin{equation*}
    \mathcal F_\sigma(g_2) = \mathcal F_\sigma(g_1^\ast) \overline{\mathcal F_W(A)}/\mathcal F_W(A),
\end{equation*}
or equivalently
\begin{equation*}
    g_2 = g_1^\ast \ast \mathcal F_\sigma^{-1}(\overline{\mathcal F_W(A)}/\mathcal F_W(A))
\end{equation*}
in the sense of tempered distributions.
\end{proof}
\begin{remark}
When $A = A^{\ast_{\textrm{QHA}}}$ we have $\overline{\mathcal F_W(A)}/\mathcal F_W(A) = 1$ implying that $\varphi = (2\pi)^n\delta_0$. 
\end{remark}

We break the verification of Theorem \ref{thm: induced_segal_algebra} into the following three lemmas:
\begin{lemma}
The space $(S^{A}, \|\cdot\|_{S^{A}})$ is a Banach space.
\end{lemma}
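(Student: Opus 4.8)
The plan is to identify $(S^{A}, \|\cdot\|_{S^{A}})$ with an isometric copy of the complete space $S \oplus S$, so that completeness is inherited. Before that can be done, the first and genuinely substantive step is to verify that the expression $\|(f, g\ast A)\|_{S^{A}} = \|f\|_{S} + \|A\|_{\mathcal{T}^{1}}\|g\|_{S}$ is well-defined, i.e., independent of the representation of an element of $S^{A}$. The first coordinate determines $f$ outright, so only $g$ must be pinned down by the datum $g \ast A$. This is exactly where the regularity of $A$ enters: if $g_1 \ast A = g_2 \ast A$ with $g_1, g_2 \in S$, then applying \eqref{eq:conv_function_operator} gives $\mathcal{F}_\sigma(g_1 - g_2)\,\mathcal{F}_W(A) = 0$, and since $\mathcal{F}_W(A)$ vanishes nowhere we conclude $\mathcal{F}_\sigma(g_1 - g_2) = 0$, hence $g_1 = g_2$ by injectivity of the symplectic Fourier transform. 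Thus the map $g \mapsto g\ast A$ is injective on $S$, the correspondence $(f, g\ast A) \leftrightarrow (f,g)$ is unambiguous, and $\|\cdot\|_{S^{A}}$ is a genuine norm, since positivity, homogeneity and the triangle inequality all descend directly from the corresponding properties of $\|\cdot\|_{S}$.

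With well-definedness in hand, I would introduce the linear bijection $\Phi\colon S \oplus S \to S^{A}$, $\Phi(f, g) = (f, g\ast A)$, which is surjective by the definition of $S^{A}$ and injective by the previous paragraph. Equipping $S \oplus S$ with the weighted norm $\|f\|_{S} + \|A\|_{\mathcal{T}^{1}}\|g\|_{S}$ makes $\Phi$ an isometry. Because $A$ is regular it is in particular nonzero, so $\|A\|_{\mathcal{T}^{1}} > 0$ and this weighted norm is equivalent to the standard product norm on $S \oplus S$; as $S$ is complete, being a Banach space by definition of a Segal algebra, the product space $S \oplus S$ is complete in either norm.

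Completeness of $S^{A}$ then transfers through $\Phi$, which I would spell out directly. Given a Cauchy sequence $(f_n, g_n \ast A)_n$ in $S^{A}$, the norm formula shows that $(f_n)_n$ is Cauchy in $S$ and, since $\|A\|_{\mathcal{T}^{1}} > 0$, so is $(g_n)_n$; let $f_n \to f$ and $g_n \to g$ in $S$. Then $(f, g\ast A) \in S^{A}$ and
\[
\|(f_n, g_n\ast A) - (f, g\ast A)\|_{S^{A}} = \|f_n - f\|_{S} + \|A\|_{\mathcal{T}^{1}}\|g_n - g\|_{S} \longrightarrow 0,
\]
so the limit lies in $S^{A}$ and the space is complete. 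I expect the only non-formal point to be the well-definedness of the norm, which rests entirely on the injectivity of $g \mapsto g\ast A$ guaranteed by regularity; everything after that is the routine observation that $S^{A}$ is nothing but a renormed copy of $S \oplus S$.
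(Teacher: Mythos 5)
Your proof is correct and takes essentially the same route as the paper: the decisive step in both is that regularity of $A$ together with $\mathcal{F}_W(g\ast A)=\mathcal{F}_\sigma(g)\cdot\mathcal{F}_W(A)$ forces $g \mapsto g\ast A$ to be injective (the paper packages this as positive definiteness of the norm, you as well-definedness of the norm, which are two phrasings of the same fact), and completeness is inherited from $S$. Your explicit isometric identification of $S^{A}$ with a renormed $S \oplus S$ simply spells out what the paper compresses into ``the completeness of $S^A$ follows easily from the completeness of $S$.''
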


\begin{proof}
The homogeneity and triangle inequality the norm is straightforward. For the positive definiteness, it is clear from \eqref{eq:norm_on_quantum_feichtinger_algebra} that $\|(f, g \ast A)\|_{S^{A}} = 0$ implies that $(f, g \ast A) = (0, 0)$. Conversely, assume that $(f, g \ast A) = (0,0)$. Then $f = 0$ and $g \ast A = 0$. By using \eqref{eq:conv_function_operator} we have that 
\[\mathcal{F}_{W}(g \ast A) = \mathcal{F}_{\sigma}(g)\cdot \mathcal{F}_{W}(A) = 0.\]
Since $A$ is regular, this forces $\mathcal{F}_{\sigma}(g)= 0$. Hence $g = 0$ and $\|\cdot\|_{S^{A}}$ is a norm on $S^{A}$. 
The completeness of $S^A$ follows easily from the completeness of $S$.
\end{proof}

\begin{lemma}
The induced Segal algebra $(S^{A}, \|\cdot\|_{S^{A}})$ satisfies \ref{label:QS1} and \ref{label:QS2}.
\end{lemma}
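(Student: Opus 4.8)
The plan is to verify the two defining axioms separately, namely density \ref{label:QS1} and the Banach-algebra property \ref{label:QS2}; completeness has already been handled by the preceding lemma. For \ref{label:QS1}, since the two components of an element $(f, g \ast A) \in S^A$ vary independently over $f, g \in S$, it suffices to approximate each slot on its own. The first slot is immediate from \ref{def:s1}, as $S$ is dense in $L^1(\mathbb R^{2n})$. For the second slot I must show $\{g \ast A : g \in S\}$ is dense in $\mathcal T^1$; using the bound $\|g \ast A\|_{\mathcal T^1} \le C \|g\|_{L^1}\|A\|_{\mathcal T^1}$ together with the $L^1$-density of $S$, this reduces to proving $\overline{L^1(\mathbb R^{2n}) \ast A} = \mathcal T^1$. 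This is the one genuinely non-trivial point, and it is exactly where regularity of $A$ is used. The closure $V \coloneqq \overline{L^1 \ast A}$ is a closed shift-invariant subspace of $\mathcal T^1$; by \eqref{eq:conv_function_operator} and the hypothesis $\mathcal F_W(A)(z) \neq 0$ for all $z$, its associated zero set is empty, so Werner's correspondence \cite[Thm.~4.1]{werner84} pairs $V$ with the closed ideal $L^1(\mathbb R^{2n})$ itself (via the Tauberian theorem, exactly as in Remark \ref{re graded}), giving $V = \overline{\mathcal T^1 \ast L^1(\mathbb R^{2n})} = \mathcal T^1$. Combining the two slots yields the desired density.

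For \ref{label:QS2} I first check that $S^A$ is closed under the product inherited from $L^1 \oplus \mathcal T^1$. Expanding
\[(f_1, g_1\ast A)\ast(f_2, g_2\ast A) = \big(f_1\ast f_2 + (g_1\ast A)\ast(g_2\ast A),\ f_1\ast(g_2\ast A) + f_2\ast(g_1\ast A)\big),\]
I rewrite each piece using associativity and commutativity of the convolutions. The operator-operator term satisfies $(g_1\ast A)\ast(g_2\ast A) = (g_1\ast g_2)\ast(A\ast A)$ with $A\ast A \in L^1(\mathbb R^{2n})$, while the cross terms satisfy $f_i\ast(g_j\ast A) = (f_i\ast g_j)\ast A$. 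Since $S$ is simultaneously a convolution algebra by \ref{def:s2} and an ideal of $L^1(\mathbb R^{2n})$ by \eqref{eq:banach_ideal_bound} and \eqref{eq:Cohen_Hewitt_factorization}, the first component $f_1\ast f_2 + (g_1\ast g_2)\ast(A\ast A)$ lies in $S$, and the second equals $G\ast A$ with $G = f_1\ast g_2 + f_2\ast g_1 \in S$. Hence the product again belongs to $S^A$, and uniqueness of the representation (guaranteed by regularity of $A$, as in the preceding Banach-space lemma) makes the $S^A$-norm well defined on it.

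It remains to verify submultiplicativity. Writing the product as $(F, G\ast A)$ as above and applying the triangle inequality together with submultiplicativity of $\|\cdot\|_S$, the ideal bound $\|h\ast g\|_S \le \|h\|_{L^1}\|g\|_S$, and Werner's inequality $\|A\ast A\|_{L^1} \le C\|A\|_{\mathcal T^1}^2$, one finds that $\|(F, G\ast A)\|_{S^A}$ is dominated, term by term and up to a single uniform constant $C$, by the expansion
\[\|f_1\|_S\|f_2\|_S + \|A\|_{\mathcal T^1}^2\|g_1\|_S\|g_2\|_S + \|A\|_{\mathcal T^1}\big(\|f_1\|_S\|g_2\|_S + \|f_2\|_S\|g_1\|_S\big),\]
whose value is precisely $\|(f_1, g_1\ast A)\|_{S^A}\,\|(f_2, g_2\ast A)\|_{S^A}$. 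Thus $\|x \ast y\|_{S^A} \le C \|x\|_{S^A}\|y\|_{S^A}$, and any such constant is absorbed by passing to the equivalent norm $C\|\cdot\|_{S^A}$, so $S^A$ is a Banach algebra.

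The main obstacle is the density of the operator slot, i.e.\ establishing $\overline{L^1 \ast A} = \mathcal T^1$; here the regularity of $A$ is indispensable, as it is what forces the zero set of $V$ to be empty and hence lets the Tauberian theorem apply. Everything else is routine bookkeeping with the convolution identities, once the ideal property of Segal algebras is invoked to keep both components inside $S$.
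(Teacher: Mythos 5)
Your proof is correct and follows essentially the same route as the paper: density is checked slot-by-slot, with the key fact being that $L^1(\mathbb{R}^{2n}) \ast A$ is dense in $\mathcal{T}^1$ for regular $A$ (which the paper simply cites from Werner, while you unfold it via the correspondence/Tauberian argument), and the Banach algebra property is verified by the identical expansion of the product and the same term-by-term estimate using the Segal ideal bound and $\|A \ast A\|_{L^1} \leq \|A\|_{\mathcal{T}^1}^2$.
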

\begin{proof}
To show \ref{label:QS1}, notice that $L^1(\mathbb R^{2n}) \ast A$ is dense in $\mathcal T^1$ whenever $A$ is regular by \cite{werner84}. By \ref{def:s1} the induced Segal algebra $(S^{A}, \|\cdot\|_{S^{1}})$ is a dense subspace of $L^{1} \oplus \mathcal{T}^{1}$.

For \ref{label:QS2}, let us first show that $\ast$ is a well-defined operator on $S^{A}$. For two elements $(f_{1}, g_{1} \ast A)$ and $(f_{2}, g_{2} \ast A)$ we have 
\[(f_{1}, g_{1} \ast A) \ast (f_{2}, g_{2} \ast A) = (f_{1} \ast f_{2} + (g_{1} \ast g_{2}) \ast (A \ast A), (f_{1} \ast g_{2} + f_{2} \ast g_{1}) \ast A).\]
Since $S$ is closed under convolution, we know that \[f_{1} \ast f_{2},\, g_{1} \ast g_{2},\, f_{1} \ast g_{2} + f_{2} \ast g_{1}\in S.\] Moreover, we know that $A \ast A \in L^{1}(\mathbb{R}^{2n})$. Since Segal algebras are ideals of $L^1(\mathbb{R}^{2n})$ the product is well-defined. 
To see that $(S^{A}, \|\cdot\|_{S^{A}})$ is a Banach algebra we compute
\begin{align*}
    & \|(f_{1}, g_{1} \ast A) \ast (f_{2}, g_{2} \ast A)\|_{S^{A}} \\ 
    & \leq  \|f_{1} \ast f_{2}\|_{S} + \|g_{1} \ast g_{2}\|_{S} \|A \ast A\|_{L^{1}} + \|A\|_{\mathcal{T}^{1}}\|f_{1} \ast g_{2} + f_{2} \ast g_{1}\|_{S} \\ 
    & \leq  \|f_{1}\|_{S} \|f_{2}\|_{S} + \|g_{1}\|_{S} \|g_{2}\|_{S} \|A\|_{\mathcal{T}^{1}}^{2} + \|A\|_{\mathcal{T}^{1}}(\|f_{1}\|_{S} \|g_{2}\|_{S} + \|f_{2}\|_{S} \|g_{1}\|_{S}) \\ 
    & =
    \|(f_{1}, g_{1} \ast A)\|_{S^{A}} \cdot \|(f_{2}, g_{2} \ast A)\|_{S^{A}}.\qedhere
\end{align*}
\end{proof}
\begin{lemma}
The induced Segal algebra $(S^{A}, \|\cdot\|_{S^{A}})$ satisfies \ref{label:QS3} and \ref{label:QS4}.
\end{lemma}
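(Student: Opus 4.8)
The plan is to reduce everything to the behaviour of the shift on the function-operator convolution, and then invoke properties \ref{def:s3} and \ref{def:s4} of the underlying Segal algebra $S$. The one computation driving the whole argument is the commutation identity
\[\alpha_z(g \ast A) = (\alpha_z g) \ast A, \qquad g \in S,\ A \in \mathcal{T}^1,\ z \in \mathbb{R}^{2n},\]
which I would establish directly from the definition \eqref{eq:function_operator_convolution}: writing $\alpha_z(g \ast A) = \frac{1}{(2\pi)^n}\int_{\mathbb{R}^{2n}} g(w)\,\alpha_{z+w}(A)\,\mathrm{d}w$ and performing a translation of the integration variable turns the weight into $g(w - z) = (\alpha_z g)(w)$, which is exactly $(\alpha_z g) \ast A$.

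Granting this, the verification of \ref{label:QS3} is immediate. For $(f, g \ast A) \in S^A$ we have
\[\alpha_z(f, g \ast A) = (\alpha_z f,\, \alpha_z(g \ast A)) = (\alpha_z f,\, (\alpha_z g) \ast A),\]
and since $\alpha_z f, \alpha_z g \in S$ by \ref{def:s3}, the right-hand side is again of the required form, so it lies in $S^A$. I want to stress that it is precisely this identity that keeps the second coordinate inside the subspace in the prescribed shape $(\text{element of }S) \ast A$, rather than landing on a generic trace-class operator.

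For \ref{label:QS4} I would first recall that the norm \eqref{eq:norm_on_quantum_feichtinger_algebra} is genuinely well-defined because $A$ being regular forces the representation $(f, g \ast A)$ to be unique: this was already observed in the previous lemma, where $g \ast A = 0$ implies $g = 0$ via \eqref{eq:conv_function_operator} and the non-vanishing of $\mathcal F_W(A)$. With uniqueness secured, the isometry and continuity are one-line computations using \ref{def:s4}:
\[\|\alpha_z(f, g \ast A)\|_{S^A} = \|\alpha_z f\|_S + \|A\|_{\mathcal{T}^1}\|\alpha_z g\|_S = \|f\|_S + \|A\|_{\mathcal{T}^1}\|g\|_S = \|(f, g \ast A)\|_{S^A},\]
and
\[\|\alpha_z(f, g \ast A) - (f, g \ast A)\|_{S^A} = \|\alpha_z f - f\|_S + \|A\|_{\mathcal{T}^1}\|\alpha_z g - g\|_S \xrightarrow[z \to 0]{} 0.\]

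I do not anticipate a serious obstacle, as the content is essentially bookkeeping. The only points requiring genuine care are the commutation identity $\alpha_z(g \ast A) = (\alpha_z g) \ast A$, which makes the coordinatewise computation of the $S^A$-norm legitimate, and the appeal to regularity of $A$ to guarantee that the norm is computed on a unique representative. Both are already available, so the proof amounts to assembling them in the order above.
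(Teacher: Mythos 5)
Your proposal is correct and takes essentially the same route as the paper's proof: both hinge on the identity $\alpha_z(f, g \ast A) = (\alpha_z f, (\alpha_z g) \ast A)$ and then invoke \ref{def:s3} and \ref{def:s4} for $S$ to get shift-invariance, isometry, and continuity coordinatewise. Your extra care—deriving the commutation identity $\alpha_z(g \ast A) = (\alpha_z g)\ast A$ from the definition of the convolution, and noting that regularity of $A$ makes the norm well-defined on a unique representative—fills in details the paper leaves implicit, but does not change the argument.
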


\begin{proof}
For any $z \in \mathbb{R}^{2n}$ we have that
\begin{align*}
\alpha_z(f, g \ast A) = (\alpha_z(f), \alpha_z(g) \ast A).
\end{align*}
Since the Segal algebra $S$ is shift invariant and shifts are norm isometric, it follows that $(S^{A}, \|\cdot\|_{S^{A}})$ is shift invariant and shifts are norm isometric. 
Finally, the shifts act continuously since
\begin{equation*}
\lim_{z\to 0}\| \alpha_z(f, g \ast A) - (f, g \ast A)\|_{S^{A}}
= \lim_{z\to 0}\| \alpha_z(f) - f\|_{S} + \| A\|_{\mathcal{T}^1} \lim_{z\to 0}\| \alpha_z(g) - g\|_{S}= 0. \qedhere
\end{equation*}
\end{proof}

\begin{example}
Let us assume that $A=\phi\otimes \psi$ is a rank one operator and $g\in S\subset L^1(\mathbb{R}^{2n})$, where $S$ is a Segal algebra. Define the \emph{localization operator} by
\[\mathcal{A}^{\phi,\psi}_g\eta \coloneqq \int_{\mathbb{R}^{2n}}g(z)\langle \eta,W_z\psi\rangle W_z\phi\, \mathrm{d}z .\]
The localization operator can be rewritten as
\[g\ast(\phi\otimes \psi)=\mathcal{A}^{\phi,\psi}_g.\]
Hence the induced quantum Segal algebra with respect to $\phi\otimes \psi$ and $S$ is on the form  \[(f,\mathcal{A}^{\phi,\psi}_g), \qquad f,g\in S.\]
To get a quantum Segal algebra we need that $\mathcal{F}_W(\phi\otimes \psi)(z)\not=0$ for all $z\in \mathbb{R}^{2n}$. Computing the Fourier-Weyl transform of $\phi\otimes \psi$ we get
\[\mathcal{F}_W(\phi\otimes\psi)(z) = e^{i x\xi/2}V_{\psi}\phi(-z)\not = 0,\qquad z=(x,\xi)\in \mathbb{R}^{2n},\]
where $V_{\psi}\phi$ is given in \eqref{eq:STFT}. Examples of functions satisfying $V_{\psi}\phi(z)\not = 0$ are given in \cite{GJM20}.
\end{example}

\begin{remark}
Not every quantum Segal algebras is an induced Segal algebra. As an example, let $\mathcal{S}_0=\mathcal{S}_0(\mathbb{R}^{2n})$ denote the Feichtinger algebra defined in Example \ref{ex:segal} \ref{ex:Feichtinger}. Then for any regular operator $A$ the set $L^1(\mathbb{R}^{2n}) \oplus (\mathcal{S}_0 \ast A)$ is a quantum Segal algebra with the norm
\[\|(f,g\ast A)\|_{L^1(\mathbb{R}^{2n}) \oplus (\mathcal{S}_0 \ast A)}=\|f\|_{L^1}+\|g\|_{\mathcal{S}_0}\|A\|_{\mathcal{T}^1}.\]
\end{remark}
\begin{remark}\label{remex}
Induced Segal algebras are in general not ideals of $L^1 \oplus \mathcal T^1$. As an example, let $n=1$ and consider the induced Segal algebra $L^1(\mathbb R^2)^A$ with $A \in \mathcal T^1$ a regular operator. Then the set
\begin{align*}
    X=\{ f \ast A: f \in L^1(\mathbb R^2)\} = \{ B \in \mathcal T^{1}: (0, B) \in L^1(\mathbb R^2)^A\}
\end{align*}
is dense in $\mathcal T^1$. Further, $X$ is a proper subset of $\mathcal T^1$, since $A\not\in X$. This follows from $A = 2\pi\delta_0 \ast A$ together with the map $\mu \mapsto \mu \ast A$ being injective for all $\mu\in \mathcal{S}'(\mathbb{R}^{2})$. Nevertheless, since $\mathcal T^1$ is an essential $L^1$ module, we have
\begin{align*}
f\ast B=A, \quad f\in L^1(\mathbb{R}^2),\, B\in \mathcal{T}^1.
\end{align*}
We therefore conclude that $L^1(\mathbb R^2)^A$ is not an $L^1 \oplus \mathcal T^1$ ideal, since $(f, 0)\in L^1(\mathbb R^2)^A$ and \[(0,B)\ast (f,0)=(0,A)\not \in L^1(\mathbb{R}^2)^A.\]
\end{remark}

\begin{remark}
One might initially expect that if $S$ is a modulation-invariant Segal algebra, then the induced quantum Segal algebra $S^{A}$ is modulation-invariant as well. However, this is not the case. As a simple counterexample, let $S = \mathcal{S}_{0}(\mathbb{R}^{2})=\mathcal{S}_{0}$ be the Feichtinger algebra and let $A$ be the regular operator given by 
\[\mathcal{F}_{W}(A)(z) = e^{-z^2}, \qquad z \in \mathbb{R}^{2}.\] 
Assume by contradiction that $S^{A}$ is modulation-invariant. Then for any $g_{1} \in \mathcal{S}_{0}$ and any $z' \in \mathbb{R}^{2}$ there should exist $g_{2} \in \mathcal{S}_{0}$ such that
\[\gamma_{z'}(g_{1} \ast A) = \gamma_{z'}(g_{1}) \ast \gamma_{z'}(A) = g_{2} \ast A.\]
Taking the Fourier-Weyl transform of both sides gives
\[\mathcal{F}_{\sigma}(g_{1})(z - z')e^{-(z - z')^2} = \mathcal{F}_{\sigma}(g_{2})(z)e^{-z^2}.\]
Since $\mathcal{F}_{\sigma}(\mathcal{S}_0)=\mathcal{S}_0$, we have that
$h_{1} = e^{-z'^2}\mathcal{F}_{\sigma}(g_{1}) \in \mathcal{S}_{0}$ and $h_{2} = \mathcal{F}_{\sigma}(g_{2}) \in \mathcal{S}_{0}$. By picking $z' = -\frac{1}{2}$ we obtain
\[h_{1}\left(z + \frac{1}{2}\right)e^{z} = h_{2}(z).\]
Since $h_{1}$ is arbitrary, and $e^z\mathcal{S}_{0}\not = \mathcal{S}_{0}$ we have a contradiction. For example, for $h_{1}(z) = (1 + |z|^2)^{-1}$ we have no solution $h_2\in \mathcal{S}_0$.
\end{remark}

\subsection{Quantum Segal Algebras through Quantization}
In this section, we will construct quantum Segal algebras using the Weyl quantization. Recall that $\widetilde{f}(z)=f(-z)$, which naturally extends to $\mathcal S(\mathbb R^{2n})$. Given a set  $M$ of functions or distributions, we will use the notation \[\widetilde{M} = \{\widetilde{f} : f\in M\}.\] Notice that if $S$ is a Segal algebra, then so is also $\widetilde{S}$.
Given a set of tempered operators $\mathcal{A} \subset \mathcal S'(\mathcal H)$, we denote the set of symbols by $\mathrm{Sym}(\mathcal{A}) \subset \mathcal S'(\mathbb R^{2n})$, i.e., 
\[\widetilde{\mathrm{Sym}(\mathcal A)} = \mathcal F_\sigma(\mathcal F_W(\mathcal A )).\] Vice versa, given a set of tempered distributions $S \subset \mathcal S'(\mathbb R^{2n})$, we denote by $\mathrm{Sym}^{-1}(S)$ the set of quantized operators as a subset of $\mathcal S'(\mathcal H)$, i.e.,
\[\mathrm{Sym}^{-1}(S) = P\mathcal F_W( \mathcal F_\sigma(\widetilde{S}))P.\]
In the following result, we present a Segal algebra which plays an important role for the discussion of quantum Segal algebras.
\begin{lemma}
    Denote
    \begin{align*}
        \mathcal T := \{ f \in L^1(\mathbb R^{2n}): ~A_f \in \mathcal T^1\}.
    \end{align*}
    Endowed with the norm
    \begin{align*}
        \| f\|_{\mathcal T} := \| f\|_{L^1} + \| A_f\|_{\mathcal T^1},
    \end{align*}
   the space $\mathcal T$ is a star-symmetric and strongly modulation-invariant Segal algebra.
\end{lemma}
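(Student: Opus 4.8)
The plan is to verify the four Segal-algebra axioms (S1)--(S4) together with star-symmetry and strong modulation-invariance, and the whole argument will be driven by a family of \emph{intertwining identities} showing that the Weyl quantization $f \mapsto A_f$ converts each relevant operation on symbols into the corresponding operation on operators. Concretely, I will use
\begin{align*}
    A_{f \ast g} = \widetilde{f} \ast A_g, \qquad A_{\alpha_z f} = \alpha_{-z}(A_f), \qquad A_{f^\ast} = (A_f)^{\ast_{\textrm{QHA}}}, \qquad A_{\gamma_z f} = \gamma_{-z}(A_f).
\end{align*}
The first is immediate from \eqref{eq: quantization of convolution} (put $f = \widetilde{h}$ in $h \ast A_g = A_{\widetilde{h} \ast g}$), and the second is the quantization--shift relation already recorded in the text. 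The last two require short Fourier-side computations, and these are where the real work lies.

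For density (S1), since the quantization restricts to a topological isomorphism $\mathcal S(\mathbb R^{2n}) \to \mathcal S(\mathcal H) \subset \mathcal T^1$, every Schwartz symbol lies in $\mathcal T$, and $\mathcal S(\mathbb R^{2n})$ is dense in $L^1(\mathbb R^{2n})$. For completeness, I take a $\|\cdot\|_{\mathcal T}$-Cauchy sequence $(f_k)$; then $f_k \to f$ in $L^1$ and $A_{f_k} \to B$ in $\mathcal T^1$ for some $f$ and $B$. Because the quantization $\mathcal S'(\mathbb R^{2n}) \to \mathcal S'(\mathcal H)$ is continuous and both $L^1(\mathbb R^{2n})$ and $\mathcal T^1$ embed continuously into $\mathcal S'(\mathcal H)$, uniqueness of limits in $\mathcal S'(\mathcal H)$ forces $A_f = B \in \mathcal T^1$, so $f \in \mathcal T$ and $f_k \to f$ in $\mathcal T$.

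Axioms (S2)--(S4) then follow mechanically from the first two intertwiners. For (S2), the identity $A_{f \ast g} = \widetilde{f} \ast A_g$ places $A_{f\ast g}$ in $\mathcal T^1$ (an $L^1$ function convolved with a trace-class operator is again trace-class) and yields, up to the harmless normalization factors,
\[
\| f \ast g\|_{\mathcal T} \le \| f\|_{L^1}\| g\|_{L^1} + \| f\|_{L^1}\| A_g\|_{\mathcal T^1} = \| f\|_{L^1}\| g\|_{\mathcal T} \le \| f\|_{\mathcal T}\| g\|_{\mathcal T},
\]
so $\mathcal T$ is a Banach algebra (in fact an $L^1$-ideal). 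For (S3)--(S4), $A_{\alpha_z f} = \alpha_{-z}(A_f)$ combined with the isometric, strongly continuous action of $\alpha_{-z}$ on $\mathcal T^1$ gives shift-invariance, $\|\alpha_z f\|_{\mathcal T} = \|f\|_{\mathcal T}$, and $\|\alpha_z f - f\|_{\mathcal T} \to 0$ as $z \to 0$, the $L^1$-part being the classical continuity of shifts.

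Finally, star-symmetry and modulation-invariance reduce to the two remaining identities, whose derivation is the main obstacle. For star-symmetry I compute $\mathcal F_W(A_{f^\ast}) = \mathcal F_\sigma(\widetilde{f^\ast}) = \mathcal F_\sigma(\overline{f})$ and compare with $\mathcal F_W((A_f)^{\ast_{\textrm{QHA}}}) = \overline{\mathcal F_W(A_f)} = \overline{\mathcal F_\sigma(\widetilde{f})}$ via \eqref{eq:Fourier conjugate}; a change of variables shows these agree, and injectivity of $\mathcal F_W$ gives $A_{f^\ast} = (A_f)^{\ast_{\textrm{QHA}}}$, which preserves $\mathcal T^1$ isometrically. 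For modulation-invariance I track how $\mathcal F_\sigma$ sends the function modulation $\gamma_z$ to a shift and how $\mathcal F_W^{-1}$ sends that shift back to the operator modulation $\gamma_{-z}$, obtaining $A_{\gamma_z f} = \gamma_{-z}(A_f)$; since operator modulation acts isometrically and strongly continuously on $\mathcal T^1$ by Lemma \ref{lemma:modulation_cont_t1}, this gives $\gamma_z f \in \mathcal T$, the isometry $\|\gamma_z f\|_{\mathcal T} = \|f\|_{\mathcal T}$, and strong continuity (the $L^1$-part by dominated convergence). The delicate point throughout is the bookkeeping for the interchange of shifts and modulations under $\mathcal F_\sigma$ and $\mathcal F_W^{-1}$ — getting the signs and the factors of $\tfrac{1}{2}$ in the Weyl operators correct — together with justifying the completeness step at the level of tempered operators.
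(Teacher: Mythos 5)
Your proposal is correct and follows essentially the same route as the paper: both verify the Segal axioms through the quantization intertwining identities $A_{f\ast g} = \widetilde{f} \ast A_g$, $A_{\alpha_z f} = \alpha_{-z}(A_f)$, $A_{f^\ast} = (A_f)^{\ast_{\textrm{QHA}}}$, and $A_{\gamma_z f} = \gamma_{-z}(A_f)$, the last combined with Lemma \ref{lemma:modulation_cont_t1}. The only differences are cosmetic: you derive star-symmetry on the Fourier side rather than by parity conjugation as the paper does, and you spell out the completeness of $\|\cdot\|_{\mathcal T}$ (via uniqueness of limits in $\mathcal S'(\mathcal H)$), a point the paper leaves implicit.
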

\begin{proof}
    Firstly, note that $\mathcal T$ contains $\mathcal S(\mathbb R^{2n})$, so it is dense in $L^1(\mathbb R^{2n})$. We have
    \begin{align*}
        \| \alpha_z(f)\|_{\mathcal T} = \| \alpha_z(f)\|_{L^1} + \| A_{\alpha_z(f)}\|_{\mathcal T^1} = \| \alpha_z(f)\|_{L^1} + \| \alpha_{-z}(A_f)\|_{\mathcal T^1} = \| f\|_{\mathcal T}.
    \end{align*}
    Similarly, the shifts act strongly continuous on $\mathcal T$. Further, it is
    \begin{align*}
        \| f\ast g\|_{\mathcal T} &= \| f \ast g\|_{L^1} + \| A_{f \ast g}\|_{\mathcal T^1} = \| f\ast g\|_{L^1} + \| \widetilde{f} \ast A_g\|_{\mathcal T^1} \\
        &\leq \| f\|_{L^1} \| g\|_{L^1} + \| f\|_{L^1} \| A_g\|_{\mathcal T^1} \leq \| f\|_{\mathcal T} \| g\|_{\mathcal T}
    \end{align*}
    for $f, g \in \mathcal T$. Therefore, $\mathcal T$ is a Segal algebra. Regarding star-symmetry, note that 
    \begin{align*}
        A_{f^\ast} = A_{\tilde{\overline{f}}} = P(A_{\overline{f}})P = PA_f^\ast P = A_f^{\ast_{\textrm{QHA}}}.
    \end{align*}
    Since $\| A_f\|_{\mathcal T^1} = \| A_f^{\ast_{\textrm{QHA}}}\|_{\mathcal T^1}$, and clearly $\| f\|_{L^1} = \| f^\ast\|_{L^1}$, we have $\| f\|_{\mathcal T} = \| f^\ast\|_{\mathcal T}$. The strong modulation invariance is now immediate from Lemma \ref{lemma:modulation_cont_t1}.
\end{proof}

\begin{proposition}\label{prop:segal_parts}
Let $QS =S_{1} \oplus S_{2} \subset L^{1}(\mathbb{R}^{2n}) \oplus \mathcal{T}^{1}$ be a graded quantum Segal algebra. Then both $S_{1}$ and $\mathrm{Sym}(S_{2}) \cap L^{1}(\mathbb{R}^{2n})$ are Segal algebras. Additionally, \[\widetilde{\mathrm{Sym}(S_{2})} \ast \widetilde{\mathrm{Sym}(S_{2})} \subset S_{1}.\]
In particular, when $\mathrm{Sym}(S_{2}) \subset L^{1}(\mathbb{R}^{2n})$ the set $\mathrm{Sym}(S_{2})$ is a Segal algebra.
\end{proposition}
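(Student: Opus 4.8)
The plan is to treat the three assertions in turn, reducing each to the graded structure of $QS$ together with the Weyl-quantization identities $A_f\ast A_g=\widetilde{f\ast g}$ and $f\ast A_g=A_{\widetilde f\ast g}$, the shift identity $\alpha_z(A_f)=A_{\alpha_{-z}f}$, and the fact, established for every quantum Segal algebra, that $QS$ is an essential $L^1$-module, so that the module action is bounded and restricts to each graded summand. First I would verify that $S_1$ is a Segal algebra. Properties \ref{def:s2}--\ref{def:s4} are immediate from gradedness: $S_1$ is a closed subalgebra of $QS$ on which $(f,0)\ast(g,0)=(f\ast g,0)$, while $\alpha_z(f,0)=(\alpha_z f,0)$ gives shift-invariance with $\|\alpha_z f\|_{S_1}=\|f\|_{S_1}$ and the required continuity. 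For density \ref{def:s1} I use that $QS$ is dense in $L^1\oplus\mathcal T^1$: given $f\in L^1$, approximate $(f,0)$ by elements $(f_n,A_n)\in QS$; gradedness forces $f_n\in S_1$, and convergence in $L^1\oplus\mathcal T^1$ gives $f_n\to f$ in $L^1$.

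Second, the inclusion $\widetilde{\mathrm{Sym}(S_2)}\ast\widetilde{\mathrm{Sym}(S_2)}\subset S_1$ follows directly. For $A,B\in S_2$ with symbols $a=\mathrm{Sym}(A)$, $b=\mathrm{Sym}(B)$, the product $(0,A)\ast(0,B)=(A\ast B,0)$ lies in $QS$, so by gradedness $A\ast B\in S_1$; and by the quantization convolution formula $A\ast B=A_a\ast A_b=\widetilde{a\ast b}=\widetilde a\ast\widetilde b$. As $\widetilde a,\widetilde b$ range over $\widetilde{\mathrm{Sym}(S_2)}$, this is exactly the claim.

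Third, and this is where the work lies, I would prove that $T:=\mathrm{Sym}(S_2)\cap L^1=\{f\in L^1:A_f\in S_2\}$ is a Segal algebra, equipped with $\|f\|_T:=\|f\|_{L^1}+\|A_f\|_{S_2}$. Completeness transfers from $S_2$ via continuity of quantization on $\mathcal S'$: a $\|\cdot\|_T$-Cauchy sequence converges in $L^1$ to some $f$ and in $S_2$ to some $B$, and both limits agree in $\mathcal S'(\mathcal H)$, forcing $A_f=B\in S_2$. Using $A_{f\ast g}=\widetilde f\ast A_g$ together with boundedness of the $L^1$-action on $S_2$ one gets $\|A_{f\ast g}\|_{S_2}\le C\|f\|_{L^1}\|A_g\|_{S_2}$; combined with Young's inequality this yields a submultiplicative bound $\|f\ast g\|_T\le C'\|f\|_T\|g\|_T$, giving \ref{def:s2} after rescaling, and in particular $T$ is closed under convolution because $A_{f\ast g}=\widetilde f\ast A_g\in S_2$. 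The shift axioms \ref{def:s3}--\ref{def:s4} follow from $A_{\alpha_z f}=\alpha_{-z}(A_f)$ and the isometric, strongly continuous action of shifts on both $L^1$ and $S_2$.

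The main obstacle is density \ref{def:s1} of $T$ in $L^1$. The same identity $A_{h\ast f}=\widetilde h\ast A_f\in S_2$ for $h\in L^1$ shows $T$ is an $L^1$-submodule, so $\overline{T}^{L^1}$ is a closed ideal of $L^1(\mathbb R^{2n})$; by the Tauberian theorem it equals $L^1$ once its hull $Z(T)=\{\xi:\widehat f(\xi)=0\text{ for all }f\in T\}$ is empty. Thus density reduces to exhibiting, for each $\xi_0$, an operator in $S_2$ with \emph{integrable} symbol whose Fourier--Weyl transform is nonzero at $\xi_0$. Here lies the crux: density of $S_2$ in $\mathcal T^1$ produces $B\in S_2$ with $\mathcal F_W(B)(\xi_0)\ne0$, and one may localize it as $g\ast B\in S_2$ (with $\mathcal F_\sigma(g)$ a smooth bump at $\xi_0$) so that $\mathcal F_W(g\ast B)$ is compactly supported and still nonvanishing at $\xi_0$; but since $\mathcal F_W(B)$ is merely continuous, this renders the symbol band-limited yet not obviously integrable, and making it genuinely integrable seems to require smoothing $\mathcal F_W(B)$, which would need modulation-invariance that a graded quantum Segal algebra need not possess. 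I therefore expect the honest proof of emptiness of $Z(T)$ to invoke the structural description of the minimal graded quantum Segal algebra, guaranteeing that $S_2$ already contains sufficiently regular operators (for instance with Schwartz, hence integrable, symbols) whose Fourier--Weyl transforms have no common zero. The final statement is then immediate: when $\mathrm{Sym}(S_2)\subset L^1$ one has $T=\mathrm{Sym}(S_2)$, which is consequently a Segal algebra.
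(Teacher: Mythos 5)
Your handling of $S_1$, of the inclusion $\widetilde{\mathrm{Sym}(S_{2})} \ast \widetilde{\mathrm{Sym}(S_{2})} \subset S_{1}$, and of the completeness, Banach-algebra and shift axioms for $T=\mathrm{Sym}(S_2)\cap L^1$ is correct, and your reduction of the density of $T$ to the emptiness of the hull $Z(T)$ (via the $L^1$-module property and Wiener's Tauberian theorem) is sound --- indeed somewhat more streamlined than the paper's route, which instead feeds the ideal $\mathcal F_W(S)$ into Reiter's theory of standard algebras. But your proof stops exactly at the crux: you do not prove $Z(T)=\emptyset$, you only ``expect'' it to follow from the structural description of the minimal graded quantum Segal algebra. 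That appeal is circular in this paper: the description you want is Corollary \ref{intersection_all_qsa}, and its proof explicitly invokes the proof of Proposition \ref{prop:segal_parts}. So the hardest assertion of the proposition --- that $S_2$ contains enough operators with \emph{integrable} symbols --- is left unproved.

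The idea you are missing is the auxiliary Segal algebra $\mathcal T = \{ f \in L^1(\mathbb R^{2n}): A_f \in \mathcal T^1\}$ introduced in the lemma preceding the proposition (it contains all Schwartz functions, hence is dense in $L^1$). For $f \in \mathcal T$ and $A \in S_2$ one has $f \ast A \in S_2$, since $S_2$ is an $L^1$-module by gradedness; and, crucially, its symbol $\mathrm{Sym}(f\ast A)=\widetilde f \ast \mathrm{Sym}(A)$ is \emph{automatically} in $L^1(\mathbb R^{2n})$, because up to a reflection it coincides with the operator-operator convolution $A_{\widetilde f}\ast A$ of two trace class operators: $A_{\widetilde f} \ast A = \widetilde{\widetilde f \ast \mathrm{Sym}(A)} \in L^1(\mathbb R^{2n})$. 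This bypasses entirely the obstruction you ran into: integrability of the symbol comes from the trace-class pairing, not from smoothness or decay of $\mathcal F_W(B)$, so no modulation-invariance is needed. Thus $\mathcal T \ast S_2 \subset T$, and since $\mathcal F_W(f \ast A) = \widehat f\, \widehat A$, where one may take $\widehat f$ nowhere vanishing (a Gaussian) and where the functions $\widehat A$, $A \in S_2$, have no common zero (by density of $S_2$ in $\mathcal T^1$ and Wiener's approximation theorem, exactly as you argued), the elements of $T$ have Fourier transforms with no common zero. Hence $Z(T)=\emptyset$, and your own Tauberian argument then closes the proof; the remaining statements follow as you wrote.
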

\begin{proof}
It is clear that $S_1$ is a Segal algebra.
$S = \mathrm{Sym}(S_{2}) \cap L^{1}(\mathbb{R}^{2n})$ is a Banach space with the shift-invariant norm 
\[\|f\|_{S}=\|(0, A_f)\|_{QS}+\|f\|_{L^1},\qquad f\in S.\] 
The set $S$ is a Banach algebra, since by Proposition \ref{prop:L^1 module} and \eqref{eq: quantization of convolution} we have \[\widetilde{S}\ast\widetilde{S}\subset \widetilde{S}. \]
It remains to show that $S$ is dense in $L^1(\mathbb R^{2n})$. For this, note that
\begin{align*}
    \mathcal T \ast S_2 \subset L^1(\mathbb R^{2n}) \ast S_2 = S_2.
\end{align*}
Further, we have
\begin{align*}
    \mathcal{T}\ast S_2=\mathrm{Sym}^{-1}(\mathrm{Sym}^{-1}(\mathcal{T}) \ast \tilde{S_2})\subset \mathrm{Sym}^{-1}( \mathcal{T}^1\ast \tilde{S_2})\subset\mathrm{Sym}^{-1}( \mathcal{T}^1\ast \mathcal{T}^1)\subset \mathrm{Sym}^{-1}(L^1(\mathbb{R}^{2n})),
\end{align*}
where $\tilde{S_2}=\mathrm{Sym}^{-1}(\widetilde{\mathrm{Sym}(S_2)})$. Together, this shows $\mathcal T \ast S_2 \subset S$.
Note that, since $S_2$ is dense in $\mathcal T^1$, either by Proposition \ref{prop: gelfand} \ref{prop: 3 nonequal} or by Wiener's approximation theorem for $\mathcal T^1$, cf.~\cite{werner84}, we have that \[ \{z\in \mathbb{R}^{2n}:\mathcal{F}_W(A)(z)=0 \text{ for every } A \in S_2 \}=\emptyset .\]
Now using \eqref{eq:conv_function_operator} together with the fact that for every $z\in \mathbb R^{2n}$ there exist $f\in \mathcal{T}$ and $A\in S_2$ such that $\mathcal{F}_\sigma(f)\not =0$ and $ \mathcal{F}_W(A)\not = 0$, we get that
 \[\{z\in \mathbb{R}^{2n}:\mathcal{F}_W(A)(z)=0 \text{ for every } A \in S \}=\emptyset.\]
The set $\mathcal{F}_W(S)$ is an ideal of $\mathcal{F}_W(\mathrm{Sym}^{-1}(\mathcal{T}))\subset C_0(\mathbb{R}^{2n})$ with pointwise multiplication.
Since $\mathcal{T}$ is a Segal algebra, the set $\mathcal{F}_\sigma( \mathcal{T})$ is a standard algebra by \cite[Re.\ 2.1.15]{reiter20}.
Hence by \cite[Prop.\ 2.1.14]{reiter20} we have that \[C_c(\mathbb{R}^{2n})\cap \mathcal{F}_\sigma(L^1(\mathbb{R}^{2n}))\subset \mathcal{F}_W(S_2).\]
Since $\mathcal{F}_\sigma(C_c(\mathbb{R}^{2n})\cap \mathcal{F}_\sigma(L^1(\mathbb{R}^{2n})))$ is dense in $L^1(\mathbb{R}^{2n})$ the space $S$ is a Segal algebra.
The product structure on $QS$ together with \eqref{eq: quantization of convolution} gives
\[\widetilde{\mathrm{Sym}(S_{2})}\ast\widetilde{\mathrm{Sym}(S_{2})} \subset S_{1}.\qedhere\]
\end{proof}

The following result shows how to generate a wealth of examples of graded quantum Segal algebras.
\begin{proposition}\label{Prop:construction}
Let $S_{1}$ and $S_{2}$ be two Segal algebras where $\widetilde{S_{2}}\subset S_{1}$. Then \[QS = S_{1} \oplus (\mathrm{Sym}^{-1}(S_{2}) \cap \mathcal T^1)\] is a quantum Segal algebra with the norm
\[\|(f,A_g)\|_{QS}= \|f\|_{S_{1}}+C\|\widetilde{g}\|_{S_{2}} + C'\| A_g\|_{\mathcal T^1},\]
for positive constants $C$, $C'$.
\end{proposition}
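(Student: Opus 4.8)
The plan is to check the four axioms (QS\ref{label:QS1})--(QS\ref{label:QS4}) directly, the key device being the intertwining identities \eqref{eq: quantization of convolution} together with $\alpha_z(A_f)=A_{\alpha_{-z}(f)}$, which translate every assertion about the operator component of $QS$ into an assertion about Weyl symbols, where the Segal-algebra properties of $S_1$ and $S_2$ apply.

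I would first dispatch the structural points. Because the Weyl quantization $g\mapsto A_g$ is injective, the symbol attached to an operator-component element is unique, so the three-term expression is a genuine norm. For completeness, a $\|\cdot\|_{QS}$-Cauchy sequence $(f_n,A_{g_n})$ decomposes into sequences that are Cauchy in $S_1$, in the symbol part of the norm, and in $\mathcal T^1$; the three limits are compatible because convergence of the symbols in $S_2\hookrightarrow L^1\hookrightarrow\mathcal S'(\mathbb R^{2n})$ forces $A_{g_n}\to A_g$ in $\mathcal S'(\mathcal H)$, which must coincide with the $\mathcal T^1$-limit. Axiom (QS\ref{label:QS3}) is immediate from $\alpha_z(f,A_g)=(\alpha_z f,\,A_{\alpha_{-z}g})$ together with shift-invariance of $S_1$ and $S_2$, and (QS\ref{label:QS4}) follows termwise: each of $\|\alpha_z f\|_{S_1}$, the symbol norm of $\alpha_{-z}g$, and $\|\alpha_z A_g\|_{\mathcal T^1}$ is shift-isometric and strongly continuous in $z$, the last because $\mathcal T^1$ carries a strongly shift-invariant norm.

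For the Banach-algebra axiom (QS\ref{label:QS2}) I would expand, via \eqref{eq: quantization of convolution},
\[(f_1,A_{g_1})\ast(f_2,A_{g_2})=\big(f_1\ast f_2+\widetilde{g_1\ast g_2},\;A_{\widetilde{f_1}\ast g_2+\widetilde{f_2}\ast g_1}\big).\]
The operator component remains admissible because each $\widetilde{f_i}\ast g_j$ lies in $S_2$ (a Segal algebra is an $L^1$-ideal) and $A_{\widetilde{f_i}\ast g_j}=f_i\ast A_{g_j}\in\mathcal T^1$. The function component lands in $S_1$ exactly by the hypothesis $\widetilde{S_2}\subset S_1$, since $g_1\ast g_2\in S_2$ gives $\widetilde{g_1\ast g_2}\in\widetilde{S_2}\subset S_1$, while $f_1\ast f_2\in S_1$. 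Submultiplicativity of $\|\cdot\|_{QS}$ is then a matter of bounding the three resulting terms through the Segal ideal estimates \eqref{eq:banach_ideal_bound} and \eqref{eq:banach_ideal_bound_general} and the inequality $\|f\ast A\|_{\mathcal T^1}\le\|f\|_{L^1}\|A\|_{\mathcal T^1}$; this is precisely where the constants $C,C'$ earn their keep, since they must be chosen large enough to absorb the finitely many embedding and ideal constants relating $\|\cdot\|_{L^1}$, $\|\cdot\|_{S_1}$, $\|\cdot\|_{S_2}$ and the continuous inclusion $\widetilde{S_2}\hookrightarrow S_1$.

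The main obstacle is the density axiom (QS\ref{label:QS1}). Density of $S_1$ in $L^1(\mathbb R^{2n})$ is built in, but density of the operator part in $\mathcal T^1$ is delicate: it cannot be read off from density of the Schwartz operators $\mathcal S(\mathcal H)$, because a Segal algebra need not contain $\mathcal S(\mathbb R^{2n})$ (Remark \ref{re:intersection_Segal}). I would instead argue by convolution. Fix an approximate identity $\{g_t\}_{t>0}$ for $L^1(\mathbb R^{2n})$ whose symplectic Fourier transforms $\mathcal F_\sigma(g_t)$ are compactly supported (e.g.\ de la Vallée-Poussin type kernels). For $T\in\mathcal T^1$ one has $g_t\ast T\in\mathcal T^1$ and $g_t\ast T\to T$ in $\mathcal T^1$ by essentiality of the module, while $\mathcal F_W(g_t\ast T)=\mathcal F_\sigma(g_t)\,\mathcal F_W(T)$ is compactly supported; hence, writing $g_t\ast T=A_{h_t}$, the symbol $h_t$ is a continuous $L^1$-function with compactly supported symplectic Fourier transform. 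By the intersection-of-all-Segal-algebras characterization such $h_t$ lies in every Segal algebra, in particular in $S_2$, so each $A_{h_t}$ is an operator-component element of $QS$. This proves density and finishes (QS\ref{label:QS1}). I expect this approximation step, together with the careful tracking of the flip $\widetilde{\,\cdot\,}$ through all the identities, to be the only genuinely delicate parts; everything else reduces to the Segal-algebra axioms for $S_1$ and $S_2$ via the intertwining formulas.
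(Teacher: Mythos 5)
Your handling of (QS\ref{label:QS2})--(QS\ref{label:QS4}), the norm, and completeness is essentially the paper's own argument: expand the product via \eqref{eq: quantization of convolution}, note that $f_1\ast f_2\in S_1$, $\widetilde{g_1\ast g_2}\in\widetilde{S_2}\subset S_1$ and $\widetilde{f_i}\ast g_j\in S_2$, and absorb the embedding and ideal constants (coming from Lemma \ref{lem:homeo_implies_cont} and \eqref{eq:banach_ideal_bound_general}) into $C$ and $C'$. One caveat: enlarging $C,C'$ alone does not fix the cross terms $\|f_1\ast\widetilde{g_2}\|_{S_2}$ and $\|f_1\ast A_{g_2}\|_{\mathcal T^1}$, whose constants are scale-invariant in $C,C'$; one also needs the standard normalization of the Segal norms (the convention stated after \eqref{eq:banach_ideal_bound}) or an extra rescaling of $\|\cdot\|_{S_1}$. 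But the paper's own displayed estimate has exactly the same looseness, so I do not count this against you.

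The genuine gap is in your density argument for (QS\ref{label:QS1}), where you depart from the paper. You apply a band-limited approximate identity directly to an arbitrary $T\in\mathcal T^1$ and assert that the symbol $h_t$ of $g_t\ast T$ ``is a continuous $L^1$-function with compactly supported symplectic Fourier transform.'' Compact support of $\mathcal F_W(g_t\ast T)=\mathcal F_\sigma(g_t)\mathcal F_W(T)$ and continuity of $h_t$ are immediate, but integrability of $h_t$ is not: for a general trace class $T$ the Weyl symbol is only an $L^2$ function, $\mathcal F_\sigma(g_t)\mathcal F_W(T)$ is merely a compactly supported continuous function, and the inverse Fourier transform of such a function need not be integrable (band-limitation never upgrades $L^2$ to $L^1$ by itself; think of $\operatorname{sinc}$). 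Since Remark \ref{re:intersection_Segal} characterizes the intersection of all Segal algebras \emph{inside} $L^1$, you cannot invoke it before establishing $h_t\in L^1$, and that is precisely the nontrivial point, which you assert without proof. The claim is in fact true, but proving it requires the quantum harmonic analysis machinery: for instance, since $\widehat{g_t\ast T}$ has compact support, one has $g_t\ast T=h\ast(g_t\ast T)$ for a Schwartz function $h$ with $\widehat{h}\equiv 1$ on that support, whence $h_t=\widetilde{h}\ast h_t=\widetilde{A_{\widetilde{h}}\ast(g_t\ast T)}\in L^1$ because $A_{\widetilde{h}}$ and $g_t\ast T$ are both trace class and $\mathcal T^1\ast\mathcal T^1\subset L^1$. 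Alternatively --- and this is the paper's route --- one approximates in two steps, using the Segal algebra $\mathcal T=\{f\in L^1:A_f\in\mathcal T^1\}$ introduced just before the proposition: the band-limited set $A_0=L^1\cap\mathcal F_\sigma(C_c)$ lies in every Segal algebra (hence in $S_2$ and in $\mathcal T$), is dense in $\mathcal T$, and $\mathrm{Sym}^{-1}(\mathcal T)\supset\mathcal S(\mathcal H)$ is dense in $\mathcal T^1$. Without one of these additional arguments your approximants $A_{h_t}$ are not known to lie in $\mathrm{Sym}^{-1}(S_2)$, so the density proof as written is incomplete.
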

\begin{proof}
For the property \ref{label:QS1}, note that the set $A_0 := L^1(\mathbb R^{2n}) \cap \mathcal F_\sigma(C_c(\mathbb R^{2n}))$ is contained in $\mathcal T$, since $\mathcal T$ is a Segal algebra, cf.~\cite[Prop.~6.2.5]{reiter20}. Further, the set is dense in $\mathcal T$. Since $\mathrm{Sym}^{-1}(\mathcal T)$ is dense in $\mathcal T^1$, we obtain that $\mathrm{Sym}^{-1}(A_0)$ is dense in $\mathcal T^1$. But $A_0$ is also contained in $S_2$. Therefore, $\mathrm{Sym}^{-1}(S_2) \cap \mathcal T^1$ is dense in $\mathcal T^1$.

The conditions \ref{label:QS3} and \ref{label:QS4} are straightforward to verify. For \ref{label:QS2}, $QS$ is easily seen to be a complete subspace of $L^1 \oplus \mathcal T^1$. The product is well-defined by \eqref{eq: quantization of convolution}. Hence, we only need to show that the Banach algebra property holds, that is for $(f_{1}, A_{g_{1}}), (f_{2}, A_{g_{2}}) \in QS$ we have
\[\|(f_1,A_{g_1}) \ast (f_2,A_{g_2})\|_{QS} \leq \|(f_1,A_{g_1})\|_{QS} \cdot \|(f_2,A_{g_2})\|_{QS}.\]
We know by \eqref{eq:banach_ideal_bound_general} and Lemma \ref{lem:homeo_implies_cont} that there exist positive constants $c_1$ and $c_2$ such that 
\begin{align*}
    \|g\|_{S_{1}} & \le c_1 \cdot \|\widetilde{g}\|_{S_{2}} \\
    \|f\ast\widetilde{g}\|_{S_{2}} & \le c_2 \cdot \|f\|_{S_{1}}\|\widetilde{g}\|_{S_{2}}
\end{align*}
for all $f\in S_{1}$ and $g\in S_{2}$. 
Denote by $C=\max \{c_1, c_2\}$. Further, by Lemma \ref{lem:homeo_implies_cont}, there is $C' > 0$ with $\| f\|_{L^1} \leq C' \| f\|_{S_1}$ for $f \in S_1$. We get 
\begin{align*}
   \|(f_1,A_{g_1}) \ast (f_2,A_{g_2})\|_{QS}
   &= \|f_1\ast f_2+\widetilde{g_1}\ast\widetilde{g_2}\|_{S_{1}}+\|f_1\ast\widetilde{g_2}+f_2\ast\widetilde{g_1}\|_{S_{2}} \\
   &\quad \quad + \| f_1 \ast A_{g_2} + f_2 \ast A_{g_1}\|_{\mathcal T^1}\\
   &\le \|f_1\|_{S_{1}}\|f_2\|_{S_{1}}+C^2\|\widetilde{g_1}\|_{S_{2}}\|\widetilde{g_2}\|_{S_{2}}+C\|f_1\|_{S_{1}}\|\widetilde{g_2}\|_{S_{2}} \\
   &\quad \quad + C\|f_2\|_{S_{1}}\|\widetilde{g_1}\|_{S_{2}} + \| f_1\|_{L^1} \| A_{g_2}\|_{\mathcal T^1} + \| f_2\|_{L^1} \| A_{g_1}\|_{\mathcal T^1}\\
   &\leq\|(f_1,A_{g_1})\|_{QS} \cdot \|(f_2,A_{g_2})\|_{QS}.\qedhere
\end{align*}
\end{proof}
\begin{remark}\hfill
\begin{enumerate}
    \item If, under the assumptions of the previous proposition, we further have $\mathrm{Sym}^{-1}(S_2)\subset \mathcal T^1$, then one can instead let $C' = 0$, i.e.,~the norm becomes $\| (f, A_g)\|_{QS} = \| f\|_{S_1} + C\| \widetilde{g}\|_{S_2}$. This follows from essentially the same proof, simply omitting the additional terms.
\item By the previous result, we can consider
\begin{align*}
    \mathcal T^Q := \mathcal T \oplus \mathrm{Sym}^{-1}(\mathcal T),
\end{align*}
which is a star-symmetric, strongly modulation-invariant quantum Segal algebra upon being endowed with the norm
\begin{align*}
    \| (f, A_g)\|_{\mathcal T^Q} = \| f\|_{\mathcal T} + \| g\|_{\mathcal T}.
\end{align*}
While we won't use this particular quantum Segal algebra in the following, it seems that it is a convenient framework to work within.
\item When we let $S_2 = \widetilde{S_1}$ in the above proposition, then simple computations show that the resulting quantum Segal algebra $QS = S_1 \oplus \mathrm{Sym}^{-1}(\widetilde{S_1})$ is a module over $\mathcal T^Q$.
\end{enumerate}
\end{remark}

\begin{corollary}\label{intersection_all_qsa}
Let $G$ denote the set of all graded quantum Segal algebras. Then 
\[\mathcal{F}\Big(\bigcap_{S_1\oplus S_2\in G} S_1\oplus S_2\Big) = (C_{c}(\mathbb{R}^{2n})\cap \mathcal{F}_\sigma(L^1(\mathbb{R}^{2n}))) \oplus (C_{c}(\mathbb{R}^{2n})\cap \mathcal{F}_\sigma(L^1(\mathbb{R}^{2n}))) .\]
\end{corollary}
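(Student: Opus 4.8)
The plan is to exploit that every algebra in $G$ is graded, so the intersection itself is graded and splits as $\bigcap_{S_1 \oplus S_2 \in G}(S_1 \oplus S_2) = \mathcal I_1 \oplus \mathcal I_2$, where $\mathcal I_1 = \bigcap S_1 \subset L^1(\mathbb R^{2n})$ and $\mathcal I_2 = \bigcap S_2 \subset \mathcal T^1$ are the classical and quantum parts intersected separately. Since $\mathcal F = \mathcal F_\sigma \oplus \mathcal F_W$ respects this splitting, it suffices to show $\mathcal F_\sigma(\mathcal I_1) = \mathcal F_W(\mathcal I_2) = C_c(\mathbb R^{2n}) \cap \mathcal F_\sigma(L^1(\mathbb R^{2n}))$. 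Throughout I abbreviate by $S_{\mathrm{min}}$ the intersection of all classical Segal algebras, which by Remark \ref{re:intersection_Segal} consists of the continuous $L^1$ functions with compactly supported Fourier transform; a short direct computation, using that $\mathcal F_\sigma$ is an involution and $C_c \subset L^1$, then gives $\mathcal F_\sigma(S_{\mathrm{min}}) = C_c \cap \mathcal F_\sigma(L^1)$.

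For the classical part I would first show $\mathcal I_1 = S_{\mathrm{min}}$. By Proposition \ref{prop:segal_parts} every first summand $S_1$ is itself a Segal algebra, so $\mathcal I_1 \supseteq S_{\mathrm{min}}$; conversely, given any Segal algebra $S$, Proposition \ref{Prop:construction} applied to the pair $(S, \widetilde S)$ (noting $\widetilde{\widetilde S} = S \subset S$) produces a graded quantum Segal algebra whose first summand is exactly $S$. Hence every Segal algebra occurs as some $S_1$ and $\mathcal I_1 = S_{\mathrm{min}}$, yielding the first component $\mathcal F_\sigma(S_{\mathrm{min}}) = C_c \cap \mathcal F_\sigma(L^1)$.

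For the quantum part I claim $\mathcal I_2 = \mathrm{Sym}^{-1}(S_{\mathrm{min}})$, a set automatically contained in $\mathcal T^1$ because $S_{\mathrm{min}}$ lies in the Segal algebra $\mathcal T = \{ f : A_f \in \mathcal T^1\}$. For the inclusion $\mathcal I_2 \subseteq \mathrm{Sym}^{-1}(S_{\mathrm{min}})$, I test $A \in \mathcal I_2$ against the algebras $L^1 \oplus (\mathrm{Sym}^{-1}(S) \cap \mathcal T^1)$, valid by Proposition \ref{Prop:construction} since $\widetilde S \subset L^1$: membership forces $\mathrm{Sym}(A) \in S$ for every Segal algebra $S$, i.e. $\mathrm{Sym}(A) \in S_{\mathrm{min}}$. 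For the reverse inclusion, take $A$ with $\mathrm{Sym}(A) \in S_{\mathrm{min}}$ and any graded $S_1 \oplus S_2 \in G$; by Proposition \ref{prop:segal_parts} the space $\mathrm{Sym}(S_2) \cap L^1$ is a Segal algebra, hence contains $S_{\mathrm{min}} \ni \mathrm{Sym}(A)$, and injectivity of the Weyl quantization gives $A \in S_2$. Finally, using $\mathcal F_W(A_g) = \mathcal F_\sigma(\widetilde g)$ together with the reflection-invariance of $S_{\mathrm{min}}$ gives $\mathcal F_W(\mathcal I_2) = \mathcal F_\sigma(\widetilde{S_{\mathrm{min}}}) = \mathcal F_\sigma(S_{\mathrm{min}}) = C_c \cap \mathcal F_\sigma(L^1)$, completing the proof.

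The step I expect to be the main obstacle is pinning down $\mathcal I_2$: one must be careful that symbols of trace-class operators need not be integrable, so the identity $\mathcal I_2 = \mathrm{Sym}^{-1}(S_{\mathrm{min}})$ relies essentially on the containment $S_{\mathrm{min}} \subset \mathcal T$ (guaranteeing the quantizations are genuinely trace class) and on Proposition \ref{prop:segal_parts} supplying the Segal algebra $\mathrm{Sym}(S_2) \cap L^1$ needed for the reverse inclusion. The bookkeeping of the reflections $\widetilde{\cdot}$ and of the involutive symbol correspondence $\mathcal F_W(A_g) = \mathcal F_\sigma(\widetilde g)$ also requires some care, but is otherwise routine.
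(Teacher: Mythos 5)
Your proof is correct and follows essentially the same route as the paper: both inclusions rest on the same two pillars, namely Proposition~\ref{prop:segal_parts} (giving that the graded summands yield Segal algebras, hence contain the minimal Segal algebra $S_{\mathrm{min}}$, for the inclusion ``$\supseteq$'') and Proposition~\ref{Prop:construction} (supplying enough graded quantum Segal algebras to cut the intersection down, for ``$\subseteq$''). The only cosmetic differences are that you make the instantiations of Proposition~\ref{Prop:construction} explicit (the pairs $(S, \widetilde S)$ and $(L^1, S)$) and that you replace the paper's device of intersecting $S_2$ with the Feichtinger algebra by the observation $S_{\mathrm{min}} \subset \mathcal T$, which settles the same trace-class technicality.
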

\begin{proof}
We know that the intersection of all Segal algebras satisfies
\[\mathcal{F}_{\sigma}\Big(\bigcap_{S\in \mathcal{S}} S\Big) = C_{c}(\mathbb{R}^{2n}) \cap \mathcal{F}_\sigma(L^1(\mathbb{R}^{2n})),\]
where $\mathcal{S}$ denotes the set of all Segal algebras.
By the proof of Proposition \ref{prop:segal_parts} we know that 
\[\mathcal{F}\Big(\bigcap_{S_1\oplus S_2\in G} S_1\oplus S_2\Big) \supseteq (C_{c}(\mathbb{R}^{2n})\cap \mathcal{F}_\sigma(L^1(\mathbb{R}^{2n}))) \oplus (C_{c}(\mathbb{R}^{2n})\cap \mathcal{F}_\sigma(L^1(\mathbb{R}^{2n}))).\]

The remaining inclusion follows from Proposition \ref{Prop:construction}. Note that $\mathrm{Sym}^{-1}( S_2) \subset \mathcal T^1$ can always be enforced by intersecting $S_2$ with the Feichtinger  algebra $\mathcal S_0$  which satisfies $\mathrm{Sym}^{-1}(\mathcal{S}_0) \subset \mathcal T^1$, see \cite[Thm.~3.5]{heil2008}.
\end{proof}

\subsection{The Quantum Feichtinger Algebra}
In this section we will describe a quantum Segal algebra that we call the \emph{quantum Feichtinger algebra}. It is a particular example of the quantum Segal algebras obtained by quantization of the Feichtinger algebra. We will show that this algebra is not on the form of an induced Segal algebra. Recall that $\mathcal{S}_{0} \coloneqq \mathcal{S}_{0}(\mathbb{R}^{2n})$ denotes the Feichtinger algebra defined in Example~\ref{ex:Feichtinger}. 

\begin{definition}
We say that a bounded linear operator $A$ on $\mathcal H$ is a \emph{Feichtinger operator} if $A = A_{g}$ for $g \in \mathcal{S}_{0}(\mathbb{R}^{2n})$. We denote the \emph{space of Feichtinger operators} as $\mathcal{S}_0(\mathcal H)$.
\end{definition}

Since $\mathcal S_0(\mathbb R^{2n}) \subset L^2(\mathbb R^{2n})$, there is no complication in talking about the integral kernel of a Feichtinger operator. As the discussions in \cite[Sec.~7.4]{Heil2003} shows, the integral kernel is contained in $\mathcal S_0(\mathbb R^{2n})$ if and only if the Weyl symbol is in $\mathcal S_0(\mathbb R^{2n})$, and, in this case, their norms in $\mathcal S_0(\mathbb R^{2n})$ are equivalent. Therefore, $A_{g} \in \mathcal{T}^{1}$ whenever $g \in \mathcal{S}_{0}(\mathbb{R}^{2n})$, which follows from \cite[Thm.~3.5]{heil2008}. Clearly, the space $\mathcal S_0(\mathcal H)$ is a Banach space under the norm 
\[\|A_{g}\|_{\mathcal{S}_0(\mathcal{H})} \coloneqq \|g\|_{\mathcal{S}_{0}}, \qquad A_{g} \in \mathcal{S}_0(\mathcal{H}).\]
We will let $\textrm{Fin}(\mathcal{S}_{0})$ denote all finite-rank operators $F$ satisfying $\|F\|_{\textrm{Fin}(\mathcal{S}_{0})} < \infty$, where
    \[\|F\|_{\textrm{Fin}(\mathcal{S}_{0})} \coloneqq \inf \left\{\sum_{i=1}^m|\alpha_{i}|\|\phi_{i}\|_{\mathcal{S}_{0}}\|\psi_{i}\|_{\mathcal{S}_{0}} : F = \sum_{i=1}^m\alpha_{i} \cdot \phi_{i} \otimes \psi_{i}\right\}.\]
The Feichtinger operators inherits several equivalent characterizations from the Feichtinger algebra.
\begin{theorem}\label{thm:equivalent_feichtinger_characterizations}
Let $A \in \mathcal{T}^{1}$. Then the following characterizations are equivalent:
\begin{enumerate}[1)]
    \item\label{thm:equiv_feichtinger 1} The operator $A$ is a Feichtinger operator.
    \item\label{thm:equiv_feichtinger 2} The integral kernel $K_{A}$ of $A$ is in the Feichtinger algebra.
    \item\label{thm:equiv_feichtinger 3} The operator $A$ is in the closure of $\textrm{Fin}(\mathcal{S}_{0})$ with the norm $\|\cdot\|_{\textrm{Fin}(\mathcal{S}_{0})}$.
    \item\label{thm:equiv_feichtinger 4} The operator $A$ satisfies $\mathcal{F}_{W}(A) \in \mathcal{S}_{0}$.
    \item \label{thm:equiv_feichtinger 5}The modulation of $A$ satisfies $\int_{\mathbb R^{2n}} \| (\gamma_z A) \ast A\|_{L^1} \, \mathrm{d}z < \infty$.
    \item \label{thm:equiv_feichtinger 6} As a function of $(z, z') \in \mathbb R^{2n}\times \mathbb R^{2n}$, it is $\tr(A (\alpha_z \gamma_{z'}A)^*) \in L^1(\mathbb R^{2n} \times \mathbb R^{2n})$.
\end{enumerate}
    Introduce the notation 
    \[ \| A\|_{B, \gamma}\coloneqq \int_{\mathbb R^{2n}} \| (\gamma_z A) \ast B\|_{L^1} \, \mathrm{d}z, \quad \| A\|_{B, \alpha\gamma} \coloneqq \| \tr(A (\gamma_{z'}\alpha_z B)^*)\|_{L^1(\mathbb R^{2n} \times \mathbb R^{2n})}, \]
    where $B \in \mathcal{S}_0(\mathcal H)\setminus \{0\}$ is a fixed operator.
    Then $\mathcal{S}_0(\mathcal H)$ can be given the following equivalent norms
    \[\|A\|_{\mathcal{S}_0(\mathcal{H})} \cong \| \mathcal F_W(A)\|_{\mathcal S_0} \cong \| K_A\|_{\mathcal S_0} \cong \| A\|_{Fin(\mathcal S_0)} \cong \| A\|_{B, \gamma} \cong \| A\|_{B, \alpha\gamma}. \]
\end{theorem}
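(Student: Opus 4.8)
The plan is to route every condition through two \emph{function-side} representatives of the operator $A$, namely its Fourier--Weyl transform $\mathcal F_W(A)$ and its integral kernel $K_A$, and then to reduce to the short-time Fourier transform characterization of $\mathcal S_0$ recorded in Example~\ref{ex:Feichtinger}~\ref{ex:Feichtinger}: a function $h$ lies in $\mathcal S_0(\mathbb R^m)$ precisely when $V_h h \in L^1(\mathbb R^{2m})$, with $\|V_B h\|_{L^1}\cong \|h\|_{\mathcal S_0}$ for any fixed nonzero window $B\in\mathcal S_0$. First I would establish the core triangle $\ref{thm:equiv_feichtinger 1}\Leftrightarrow\ref{thm:equiv_feichtinger 2}\Leftrightarrow\ref{thm:equiv_feichtinger 4}$. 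Since $A=A_g$ means $\mathcal F_W(A)=\mathcal F_\sigma(\widetilde g)$, and both $f\mapsto\widetilde f$ and $\mathcal F_\sigma$ map $\mathcal S_0$ isomorphically onto itself, one obtains $\ref{thm:equiv_feichtinger 1}\Leftrightarrow\ref{thm:equiv_feichtinger 4}$ together with $\|A\|_{\mathcal S_0(\mathcal H)}=\|g\|_{\mathcal S_0}\cong\|\mathcal F_W(A)\|_{\mathcal S_0}$. The equivalence $\ref{thm:equiv_feichtinger 1}\Leftrightarrow\ref{thm:equiv_feichtinger 2}$, with equivalent norms, is exactly the kernel--symbol correspondence for the Feichtinger algebra recorded in \cite[Sec.~7.4]{Heil2003} and \cite[Thm.~3.5]{heil2008}, which I would simply cite.

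For $\ref{thm:equiv_feichtinger 3}$ I would identify $\mathrm{Fin}(\mathcal S_0)$ with the algebraic tensor product $\mathcal S_0(\mathbb R^n)\odot\mathcal S_0(\mathbb R^n)$ under the projective tensor norm: a rank-one operator $\phi\otimes\psi$ has integral kernel $(s,t)\mapsto\phi(s)\overline{\psi(t)}$, so that $\|\cdot\|_{\mathrm{Fin}(\mathcal S_0)}$ is by construction the projective norm. The key input is the tensor-factorization property of the Feichtinger algebra, $\mathcal S_0(\mathbb R^n)\widehat{\otimes}\mathcal S_0(\mathbb R^n)\cong\mathcal S_0(\mathbb R^{2n})$ (see \cite{jakobsen18}), under which the kernel map carries the $\|\cdot\|_{\mathrm{Fin}(\mathcal S_0)}$-closure of $\mathrm{Fin}(\mathcal S_0)$ onto $\{A : K_A\in\mathcal S_0\}$, with comparable norms. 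Combined with $\ref{thm:equiv_feichtinger 2}$ this yields $\ref{thm:equiv_feichtinger 3}$ and $\|A\|_{\mathrm{Fin}(\mathcal S_0)}\cong\|K_A\|_{\mathcal S_0}$.

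It remains to treat the two intrinsic conditions $\ref{thm:equiv_feichtinger 5}$ and $\ref{thm:equiv_feichtinger 6}$, which I would recognize as operator short-time Fourier transforms. For $\ref{thm:equiv_feichtinger 5}$, writing $F=\mathcal F_W(A)$ and using $\mathcal F_W(\gamma_z A)=\alpha_z F$ from Lemma~\ref{lem:modulation_invariance} together with \eqref{eq:conv_operator_operator}, one gets $\mathcal F_\sigma((\gamma_z A)\ast A)=\alpha_z F\cdot F$, hence $\|(\gamma_z A)\ast A\|_{L^1}=\|\alpha_z F\cdot F\|_{\mathcal F_\sigma L^1}$; integrating in $z$ turns $\ref{thm:equiv_feichtinger 5}$ into the statement that the (symplectic) self-ambiguity of $F$ lies in $L^1(\mathbb R^{4n})$, which is equivalent to $F\in\mathcal S_0$ and thus to $\ref{thm:equiv_feichtinger 4}$. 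For $\ref{thm:equiv_feichtinger 6}$, the trace $\tr(A(\alpha_z\gamma_{z'}A)^*)$ is a matrix coefficient of $A$ against the shifted and modulated operator $\alpha_z\gamma_{z'}A$; since the integral kernel of $W_{z'}AW_z$ is a shift and modulation of $K_A$ (by the computation of this kernel given earlier, up to a unimodular factor), this matrix coefficient coincides, after a linear change of variables on $\mathbb R^{4n}$, with the ordinary short-time Fourier transform $V_{K_A}K_A$ from \eqref{eq:STFT}. Hence $\ref{thm:equiv_feichtinger 6}$ is equivalent to $K_A\in\mathcal S_0$, i.e.\ to $\ref{thm:equiv_feichtinger 2}$. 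In both cases the self-window versions characterize membership, while replacing the second copy of $A$ by a fixed window $B\in\mathcal S_0(\mathcal H)\setminus\{0\}$ yields the equivalent norms $\|A\|_{B,\gamma}$ and $\|A\|_{B,\alpha\gamma}$ by the standard window-change lemma for $\mathcal S_0$.

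I expect the main obstacle to be the careful bookkeeping in the last paragraph: verifying that the integrands in $\ref{thm:equiv_feichtinger 5}$ and $\ref{thm:equiv_feichtinger 6}$ really are short-time Fourier transforms of $F$ and $K_A$ after the substitutions and unimodular phase factors, and in particular that $\ref{thm:equiv_feichtinger 5}$, where the product $\alpha_z F\cdot F$ carries \emph{no} complex conjugate, still detects $\mathcal S_0$. This I would handle using the reflection- and conjugation-invariance of $\mathcal S_0$ together with \eqref{eq:Fourier conjugate}, reducing the non-conjugated self-ambiguity to the genuine short-time Fourier transform whose $L^1$-membership is known to characterize the Feichtinger algebra. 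The window-independence needed for the norm equivalences is then routine once membership has been established.
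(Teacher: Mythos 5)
Your proposal is correct, and for items \ref{thm:equiv_feichtinger 1}--\ref{thm:equiv_feichtinger 4} it is essentially the paper's proof: the same kernel--symbol citations for \ref{thm:equiv_feichtinger 1}$\Leftrightarrow$\ref{thm:equiv_feichtinger 2}, the tensor-factorization property of $\mathcal S_0$ from \cite{jakobsen18} for \ref{thm:equiv_feichtinger 2}$\Leftrightarrow$\ref{thm:equiv_feichtinger 3}, and the identity $\mathcal F_W(A_g)=\mathcal F_\sigma(\widetilde g)$ for \ref{thm:equiv_feichtinger 1}$\Leftrightarrow$\ref{thm:equiv_feichtinger 4}. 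The genuine divergence is in \ref{thm:equiv_feichtinger 5} and \ref{thm:equiv_feichtinger 6}. For \ref{thm:equiv_feichtinger 5}, the paper stays on the Weyl-symbol side: using $\gamma_z A_g = A_{\gamma_{-z}g}$ and \eqref{eq: quantization of convolution} it computes $(\gamma_z A_g)\ast A_g=\widetilde{(\gamma_{-z}g)\ast g}$, so that \ref{thm:equiv_feichtinger 5} becomes verbatim the condition $\int\|\gamma_z g\ast g\|_{L^1}\,\mathrm{d}z<\infty$ of \cite[Thm.~4.7]{jakobsen18}; since that cited characterization is itself stated in non-conjugated convolution form, the conjugation issue you single out as the main obstacle never arises in the paper's route, whereas your Fourier--Weyl-side reduction creates it and must then dispose of it via conjugation-invariance and the two-window $L^1$-STFT characterization (which does hold, so this is extra labour rather than a gap). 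For \ref{thm:equiv_feichtinger 6}, the paper pairs through the Fourier--Weyl transform, i.e.\ Parseval between $\mathcal T^2$ and $L^2$, obtaining $\tr(A(\alpha_z\gamma_{z'}B)^*)=\langle \mathcal F_W(A),\gamma_z\alpha_{z'}\mathcal F_W(B)\rangle_{L^2}$ and hence \ref{thm:equiv_feichtinger 6}$\Leftrightarrow$\ref{thm:equiv_feichtinger 4}; you instead pair through the integral kernel, obtaining the ordinary STFT of $K_A$ and hence \ref{thm:equiv_feichtinger 6}$\Leftrightarrow$\ref{thm:equiv_feichtinger 2}. Both are valid Parseval-type arguments: yours requires the additional (true, and easily checked from the paper's earlier kernel computation for $W_{z'}AW_z$) verification that the parameter map $(z,z')\mapsto(\text{shift},\text{modulation})$ of the kernel is an invertible linear map, while the paper's lands directly on the STFT of $\mathcal F_W(A)$ with no change of variables to justify. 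A pleasant by-product of your kernel route is that it makes explicit the identification of the ``operator STFT'' $V_B(A)(z,z')=\tr(A\alpha_z\gamma_{z'}(B))$ with a genuine STFT of the kernel, a point the paper only raises as a remark; the treatment of the equivalent norms (window-change for $\mathcal S_0$) is the same in both.
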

\begin{remark}
Note that $V_B(A)(z, z') \coloneqq \tr(A\alpha_z \gamma_{z'}(B))$ serves as an operator analogue to the short-time Fourier transform. To the authors' knowledge, the operator STFT is not present in the literature. It could serve as an interesting object for further studies.
\end{remark}
\begin{proof}
The equivalence between \ref{thm:equiv_feichtinger 1} and \ref{thm:equiv_feichtinger 2} has already been discussed above. The equivalence between \ref{thm:equiv_feichtinger 2} and \ref{thm:equiv_feichtinger 3} is a reformulation of the tensor factorization property of the Feichtinger algebra, see e.g., \cite[Thm.~7.4]{jakobsen18}. Since for $A=A_g$ we have $\mathcal{F}_{\sigma}(\widetilde{g}) = \mathcal{F}_{W}(A_g)$ and $\mathcal{F}_{\sigma}$ is a bijective isometry on $\mathcal{S}_{0}$ the equivalence between \ref{thm:equiv_feichtinger 1} and \ref{thm:equiv_feichtinger 4} follows. For the equivalence between \ref{thm:equiv_feichtinger 1} and \ref{thm:equiv_feichtinger 5} we have
\[(\gamma_z A_{g})\ast A_{g} =  A_{\gamma_{-z}g}\ast A_{g} = \widetilde{(\gamma_{-z}g) \ast g}.\]
Hence \ref{thm:equiv_feichtinger 5} holds if and only if 
\[\int_{\mathbb R^{2n}} \| \gamma_z g \ast g\|_{L^1} \, \mathrm{d}z < \infty.\]
This is a reformulation of $g$ being in $\mathcal{S}_{0}$, see \cite[Thm.~4.7]{jakobsen18}. 
Finally, for seeing that \ref{thm:equiv_feichtinger 4} and \ref{thm:equiv_feichtinger 6} are equivalent, write $f_1 = \mathcal F_W(A)$, and $f_2 = \mathcal F_W(B)$. Then
\begin{align*}
    \tr(A (\alpha_z \gamma_{z'}B)^*) &= \langle A,  \alpha_z \gamma_{z'}B\rangle_{\mathcal{T}^2} = \langle \mathcal{F}_W( A), \mathcal F_W(\alpha_z \gamma_{z'}B)\rangle_{L^2} = \langle f,   \gamma_{z}\alpha_{z'} g\rangle_{L^2}\\
    &= \int_{\mathbb R^{2n}} f(v) e^{-i\sigma(z, v)}\overline{g}(v-z')\,\mathrm{d}v.
\end{align*}
It is not hard to see that
\begin{align*}
    \int_{\mathbb R^{2n}} \int_{\mathbb R^{2n}} \left| \int_{\mathbb R^{2n}} f(v) e^{-i\sigma(z, v)}\overline{f}(v-z')\,\mathrm{d}v \right| \,\mathrm{d}z\,\mathrm{d}w = \int_{\mathbb R^{2n}} \int_{\mathbb R^{2n}} | V_f(f)(z',z) | \,\mathrm{d}z\,\mathrm{d}w,
\end{align*}
where $V_f(f)$ denotes the STFT. 

The equivalences of the norms are straightforward and from the analogous results for the Feichtinger algebra.
\end{proof}

We can now combine the spaces $\mathcal{S}_{0}$ and $\mathcal{S}_0(\mathcal H)$ to form the following algebra.

\begin{definition}
The \emph{quantum Feichtinger algebra} $\mathcal{S}_{0}^{Q} \subset L^{1} \oplus \mathcal{T}^{1}$ is given by
\begin{equation*}
    \mathcal{S}_{0}^{Q}  \coloneqq \mathcal{S}_{0} \oplus \mathcal{S}_0(\mathcal H) = \left\{(f, A_g) \in L^{1} \oplus \mathcal{T}^{1} : f,g \in \mathcal{S}_{0}\right\} 
\end{equation*}
with the norm
\[\|(f,A_g)\|_{\mathcal{S}_0^Q}=\|f\|_{\mathcal{S}_0}+\|g\|_{\mathcal{S}_0}.\]
\end{definition}
We say that a Segal algebra $(QS, \|\cdot \|_{QS})$ is \emph{strongly modulation invariant} if $\| \gamma_z(f, A)\|_{QS} = \| (f, A)\|_{QS}$ and $z \mapsto \gamma_z$ are continuous in the strong sense on $(QS, \|\cdot \|_{QS})$.
\begin{proposition}
The quantum Feichtinger algebra $\mathcal{S}_{0}^{Q}$ is a star-symmetric and strongly modulation-invariant quantum Segal algebra.
\end{proposition}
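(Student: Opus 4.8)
The plan is to obtain the quantum Segal algebra property from the general construction of Proposition~\ref{Prop:construction} and then to verify star-symmetry and strong modulation-invariance directly against the defining norm. First I would recognize $\mathcal{S}_0^Q$ as the output of Proposition~\ref{Prop:construction} for the choice $S_1 = S_2 = \mathcal{S}_0$. The Feichtinger algebra is reflection-invariant with $\widetilde{\mathcal{S}_0} = \mathcal{S}_0$ isometrically, so the hypothesis $\widetilde{S_2} \subset S_1$ holds; moreover $\mathrm{Sym}^{-1}(\mathcal{S}_0) \subset \mathcal{T}^1$ by \cite[Thm.~3.5]{heil2008}, whence $\mathrm{Sym}^{-1}(\mathcal{S}_0) \cap \mathcal{T}^1 = \mathcal{S}_0(\mathcal{H})$ and $\mathcal{S}_0^Q = \mathcal{S}_0 \oplus \mathcal{S}_0(\mathcal{H})$. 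The remark following the proposition permits $C' = 0$, and since reflection is isometric the norm produced there equals $\|f\|_{\mathcal{S}_0} + C\|g\|_{\mathcal{S}_0}$, which is equivalent to the stated norm. Properties \ref{label:QS1}, \ref{label:QS3} and \ref{label:QS4} are unaffected by the value of $C$, since shifts act diagonally and isometrically on each summand; only the submultiplicativity constant in \ref{label:QS2} depends on $C$, which is absorbed into the equivalent renorming. This establishes that $\mathcal{S}_0^Q$ is a quantum Segal algebra.

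For star-symmetry, recall that $(f, A_g)^\ast = (f^\ast, A_g^{\ast_{\textrm{QHA}}})$ and that the computation in the proof that $\mathcal{T}$ is star-symmetric gives the identity $A_g^{\ast_{\textrm{QHA}}} = A_{g^\ast}$. Since $\mathcal{S}_0$ is a star-symmetric Segal algebra, $f^\ast, g^\ast \in \mathcal{S}_0$ with $\|f^\ast\|_{\mathcal{S}_0} = \|f\|_{\mathcal{S}_0}$ and $\|g^\ast\|_{\mathcal{S}_0} = \|g\|_{\mathcal{S}_0}$. Hence $(f, A_g)^\ast = (f^\ast, A_{g^\ast}) \in \mathcal{S}_0^Q$ and $\|(f, A_g)^\ast\|_{\mathcal{S}_0^Q} = \|f\|_{\mathcal{S}_0} + \|g\|_{\mathcal{S}_0} = \|(f, A_g)\|_{\mathcal{S}_0^Q}$, as required.

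For strong modulation-invariance, the function component is immediate: $\gamma_z f$ is a symplectic modulation of $f$, and $\mathcal{S}_0$ carries isometric, strongly continuous modulations. The operator component rests on the intertwining $\gamma_z(A_g) = A_{\gamma_{-z} g}$, which I would verify through the Fourier--Weyl transform by combining $\mathcal{F}_W(\gamma_z A) = \alpha_z \mathcal{F}_W(A)$ from the proof of Lemma~\ref{lem:modulation_invariance} with $\mathcal{F}_W(A_g) = \mathcal{F}_\sigma(\widetilde{g})$ and the elementary formula $\mathcal{F}_\sigma(\gamma_w f) = \alpha_w \mathcal{F}_\sigma(f)$, giving $\mathcal{F}_W(\gamma_z A_g) = \mathcal{F}_\sigma(\gamma_z \widetilde{g}) = \mathcal{F}_W(A_{\gamma_{-z}g})$. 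By the definition $\|A_g\|_{\mathcal{S}_0(\mathcal{H})} = \|g\|_{\mathcal{S}_0}$ and isometry of modulations on $\mathcal{S}_0$, this yields $\|\gamma_z A_g\|_{\mathcal{S}_0(\mathcal{H})} = \|\gamma_{-z}g\|_{\mathcal{S}_0} = \|g\|_{\mathcal{S}_0}$, while continuity of $z \mapsto \gamma_{-z}g$ in $\mathcal{S}_0$ supplies the strong continuity. Adding the two components gives $\|\gamma_z(f, A_g)\|_{\mathcal{S}_0^Q} = \|(f, A_g)\|_{\mathcal{S}_0^Q}$ and continuity of the orbit map.

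The only genuinely computational step, and hence the main obstacle, is tracking the signs and normalizations through the symplectic and Fourier--Weyl transforms to arrive at $\gamma_z(A_g) = A_{\gamma_{-z}g}$; once this intertwining is secured, every remaining assertion reduces to a known property of the classical Feichtinger algebra $\mathcal{S}_0$ together with the characterization $\mathcal{F}_W(A_g) \in \mathcal{S}_0$ recorded in Theorem~\ref{thm:equivalent_feichtinger_characterizations}.
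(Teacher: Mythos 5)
Your proposal is correct and follows essentially the same route as the paper: the quantum Segal algebra property is obtained from Proposition~\ref{Prop:construction}, star-symmetry from the identity $A_g^{\ast_{\textrm{QHA}}} = A_{g^\ast}$, and strong modulation-invariance from the intertwining $\gamma_z(A_g) = A_{\gamma_{-z}g}$. You merely spell out details the paper leaves implicit (verifying the hypotheses $\widetilde{\mathcal S_0}=\mathcal S_0$ and $\mathrm{Sym}^{-1}(\mathcal S_0)\subset\mathcal T^1$, and the Fourier--Weyl computation behind the intertwining), which is consistent with the paper's argument.
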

\begin{proof}
The fact that $\mathcal{S}^Q_0$ is a quantum Segal algebra follows from Proposition \ref{Prop:construction}. 
The star-symmetry follows from the property
\[(f, A_{g})^{\ast} = (f^{\ast}, PA_{g}^{\ast}P) = (f^{\ast}, A_{g^*}).\]
Finally, the strong modulation invariance is immediate since for $z \in \mathbb{R}^{2n}$ and $f, g \in \mathcal{S}_{0}$ we have \[\gamma_{z}(f, A_{g}) = (\gamma_{z}(f), \gamma_{z}(A_{g})) = (\gamma_{z}(f), A_{\gamma_{-z}g}). \qedhere\]
\end{proof}

The Feichtinger algebra is yet another example of a quantum Segal algebra which is not of the induced type:

\begin{proposition}
The quantum Feichtinger algebra is not an induced quantum Segal algebra.
\end{proposition}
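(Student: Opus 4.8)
The plan is to argue by contradiction. Suppose the quantum Feichtinger algebra equals an induced quantum Segal algebra $S^{A} = \{(f, g \ast A) : f, g \in S\}$ for some Segal algebra $S$ and regular operator $A$, and set $\varphi \coloneqq \mathcal{F}_{W}(A)$. First I would identify the graded pieces. Since $A$ is regular, $g \ast A = 0$ forces $\mathcal{F}_{\sigma}(g)\,\varphi = 0$ by \eqref{eq:conv_function_operator}, hence $g = 0$; thus the classical part $S^{A} \cap (L^{1}(\mathbb{R}^{2n}) \oplus \{0\})$ equals $S \oplus \{0\}$. Comparing with the classical part $\mathcal{S}_{0} \oplus \{0\}$ of $\mathcal{S}_{0}^{Q}$ yields $S = \mathcal{S}_{0}$ as sets of functions, and comparing the quantum parts then gives the operator identity $\mathcal{S}_{0} \ast A = \mathcal{S}_0(\mathcal H)$.

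Next I would transfer this identity to the Fourier side. Applying $\mathcal{F}_{W}$ and using $\mathcal{F}_{W}(g \ast A) = \mathcal{F}_{\sigma}(g)\,\varphi$ together with $\mathcal{F}_{\sigma}(\mathcal{S}_{0}) = \mathcal{S}_{0}$, the left-hand side becomes the function space $\mathcal{S}_{0} \cdot \varphi$. By Theorem~\ref{thm:equivalent_feichtinger_characterizations} (item \ref{thm:equiv_feichtinger 4}), $\mathcal{F}_{W}$ restricts to a bijection $\mathcal{S}_0(\mathcal H) \to \mathcal{S}_{0}$, so the right-hand side becomes $\mathcal{S}_{0}$. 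The assumed equality thus reduces to $\mathcal{S}_{0} \cdot \varphi = \mathcal{S}_{0}$ as spaces of functions, where by \eqref{eq:Riemann Lebesgue} we have $\varphi \in C_{0}(\mathbb{R}^{2n})$, and where $\varphi$ is nowhere zero by regularity of $A$.

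The crux is to show this is impossible. Since $\varphi$ never vanishes, surjectivity gives that for every $k \in \mathcal{S}_{0}$ the function $k/\varphi$ again lies in $\mathcal{S}_{0}$; that is, $1/\varphi$ is a pointwise multiplier of $\mathcal{S}_{0}$. I would then show the multiplication operator $M_{1/\varphi}$ is bounded via the closed graph theorem: convergence in $\mathcal{S}_{0}$ implies uniform, hence pointwise, convergence through the continuous embedding $\mathcal{S}_{0} \hookrightarrow C_{0}(\mathbb{R}^{2n})$, and since $\varphi$ is continuous and nowhere zero the graph of $M_{1/\varphi}$ is closed. Finally I would fix $\psi \in \mathcal{S}_{0}$ with $\psi(0) \neq 0$ and evaluate at an arbitrary $x \in \mathbb{R}^{2n}$: using isometry of the shifts on $\mathcal{S}_{0}$ and the embedding into $L^{\infty}$,
\begin{align*}
    |\psi(0)|\,\left| \tfrac{1}{\varphi(x)}\right| = \left| \frac{(\alpha_{x} \psi)(x)}{\varphi(x)}\right| \leq \left\| \frac{\alpha_{x} \psi}{\varphi}\right\|_{\infty} \leq c\,\| M_{1/\varphi}\|\,\|\psi\|_{\mathcal{S}_{0}},
\end{align*}
so $1/\varphi$ would be a bounded function. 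This contradicts $\varphi \in C_{0}(\mathbb{R}^{2n})$ being nowhere zero, since along any $z_{n} \to \infty$ one has $\varphi(z_{n}) \to 0$ and hence $1/\varphi(z_{n}) \to \infty$. The main obstacle is precisely this last step: converting the purely set-theoretic identity $\mathcal{S}_{0} \cdot \varphi = \mathcal{S}_{0}$ into a quantitative bound on $1/\varphi$, for which the automatic continuity supplied by the closed graph theorem and the translation invariance of the $\mathcal{S}_{0}$-norm are the decisive tools.
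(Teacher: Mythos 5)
Your proof is correct, and it coincides with the paper's argument up to the same pivot: both reduce the hypothesis to the statement that multiplication by $\varphi = \mathcal{F}_W(A)$ maps $\mathcal{S}_0$ onto $\mathcal{S}_0$, so that $1/\varphi$ is a pointwise multiplier of $\mathcal{S}_0$ (the paper phrases this as $f \mapsto f/\mathcal{F}_W(A)$ being a bijection of $\mathcal{S}_0$, citing Theorem~\ref{thm:equivalent_feichtinger_characterizations} and $\mathcal{F}_\sigma(\mathcal{S}_0)=\mathcal{S}_0$ exactly as you do; the grading step you spell out is left implicit there, and your version actually justifies the paper's opening assertion). From that point on the two arguments genuinely diverge. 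The paper works with the explicit function $h(z)=\prod_j(1+|x_j|^2)^{-1}(1+|\xi_j|^2)^{-1}\in\mathcal{S}_0$, applies the multiplier four times to get $h/\varphi^4\in\mathcal{S}_0\subset C_0$, deduces the pointwise bound $\sqrt{|h|}\lesssim|\varphi|^2$, and concludes $\varphi\notin L^2(\mathbb{R}^{2n})$ since $\sqrt{|h|}\notin L^1$ --- contradicting the fact that $\mathcal{F}_W$ maps $\mathcal{T}^1\subset\mathcal{T}^2$ into $L^2(\mathbb{R}^{2n})$. You instead invoke automatic continuity: the closed graph theorem makes $M_{1/\varphi}$ bounded on $\mathcal{S}_0$, and the shift-isometry of $\|\cdot\|_{\mathcal{S}_0}$ together with the embedding $\mathcal{S}_0\hookrightarrow C_0$ turns that operator bound into a uniform lower bound $|\varphi|\geq\delta>0$, contradicting $\varphi\in C_0(\mathbb{R}^{2n})$ from \eqref{eq:Riemann Lebesgue}. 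Your route requires two soft ingredients the paper avoids (the closed graph theorem and the quantitative embedding $\|\cdot\|_\infty\leq c\|\cdot\|_{\mathcal{S}_0}$, both standard for the Feichtinger algebra), but it buys a stronger and more portable conclusion: any nowhere-vanishing continuous function whose reciprocal multiplies $\mathcal{S}_0$ into itself must be bounded away from zero, with no reference to the $L^2$-theory of the Fourier--Weyl transform. The paper's computation is more elementary and self-contained, pinning the contradiction on square-integrability rather than decay at infinity; both contradictions are available because a regular trace class operator has $\mathcal{F}_W(A)\in C_0\cap L^2$.
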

\begin{proof}
If $\mathcal{S}_0^Q$ was an induced quantum Segal algebra there would exist a regular trace class operator $A$ such that every Feichtinger operator $B$ can be written as 
\[B=f*A,\]
for some $f\in \mathcal{S}_0$. By Theorem~\ref{thm:equivalent_feichtinger_characterizations} \ref{thm:equiv_feichtinger 4} together with the fact that $\mathcal{F}_\sigma(\mathcal{S}_0)=\mathcal{S}_0$ this is equivalent to that every function $g\in \mathcal{S}_0$ can be written uniquely as 
\[g=f\cdot\mathcal{F}_W(A),\]
for some $f\in \mathcal{S}_0$.
Or equivalently, the map $\phi(f)= f/\mathcal{F}_W(A)$ is a bijection from $\mathcal{S}_0$ to itself. We will show that the surjectivity of $\phi$ contradicts the fact that $\mathcal{F}_W(A)\in L^2(\mathbb{R}^{2n})$.

Denote by 
\[h(z)=\prod_{j=1}^n\frac{1}{1+|x_j|^2}\frac{1}{1+|\xi_j|^2}\in \mathcal{S}_0, \qquad z=(x,\xi)\in \mathbb{R}^{2n}.\]
Then consider $\phi^4(h)=h/(\mathcal{F}_W(A))^4\in \mathcal{S}_0$. 
Since $\mathcal{S}_0\subset C_0(\mathbb{R}^{2n})$ there exists a $C>0$ such that for all $z\in \mathbb{R}^{2n}$ we have 
\[|h(z)|/|\mathcal{F}_W(A)(z)|^{4}\le C,\]
or equivalently
\[\sqrt{|h(z)|}\le C^{1/4}|\mathcal{F}_W(A)(z)|^{2}.\]
However, $\sqrt{|h(z)|}\not \in L^1(\mathbb{R}^{2n})$ implying that $\mathcal{F}_W(A)\not \in L^2(\mathbb{R}^{2n})$.
\end{proof}
\begin{remark}
The Feichtinger algebra $\mathcal S_0(\mathbb R^{n})$ is well-known for being the smallest strongly modulation-invariant Segal algebra. The same is indeed true for graded quantum Segal algebras: Denote by $(X,\|\cdot \|_X)$ a Banach space where $X \subset L^1 \oplus \mathcal T^1$ is a graded subspace, which is both strongly shift- and strongly modulation-invariant. Further, assume that there is at least one element $(f, A) \in X$ with both $f \neq 0$ and $A \neq 0$ and $(f, A) \in \mathcal{S}_0^Q$. By Corollary~\ref{intersection_all_qsa} such elements always exists for $X$ a graded quantum Segal algebra. Then, we endow $\mathcal F(X) \subset C_0(\mathbb R^{2n}) \oplus C_0(\mathbb R^{2n})$ with the norm $\| (\mathcal F_\sigma(f), \mathcal F_W(A))\| = \| (f, A)\|_X$. This turns both components of $\mathcal F(X) $ into a strongly shift- and strongly modulation-invariant space that contains at least one nontrivial element of $\mathcal S_0(\mathbb R^{2n})$. By \cite[Thm.~7.3]{jakobsen18} we thus have $\mathcal S_0(\mathbb R^{2n}) \oplus \mathcal S_0(\mathbb R^{2n}) \subset \mathcal F(X)$.
\end{remark}

One of the useful applications of the Feichtinger algebra is as a space of test functions. Its dual space $\mathcal{S}_0'(\mathbb R^{2n})$ is sometimes referred to as the space of \emph{mild distributions}. It contains, in addition to all $L^p$ spaces, some distributions such as the point measures $\delta_x$ and Dirac combs $\sum_{k \in \mathbb Z^{d}} \delta_k$. Having defined the Feichtinger operators, we can also consider the Banach space dual of $\mathcal S_0(\mathcal H)$, which we will denote by $\mathcal S_0'(\mathcal H)$. Since the Schwartz operators $\mathcal S(\mathcal H)$ embeds continuously into $\mathcal S_0(\mathcal H)$, the dual $\mathcal S_0'(\mathcal H)$ is continuously embedded into $S'(\mathcal H)$. As is the case for the Feichtinger operators, the space $\mathcal S_0'(\mathcal H)$ can be equipped with many equivalent norms. As an example, define the norm
\begin{align*}
    \| A_g\|_{\mathcal S_0'} \coloneqq \| g \|_{\mathcal S_0'(\mathbb R^{2n})}. 
\end{align*}
Here, $\| \cdot\|_{\mathcal S_0'(\mathbb R^{2n})}$ is any of the equivalent norms of $\mathcal S_0'(\mathbb R^{2n})$.
We will denote by $\langle \cdot, \cdot\rangle_{\mathcal{S}_0',\mathcal{S}_0}$ the sesquilinear pairing of an element in $\mathcal{S}_0'(\mathcal{H}) $ and  $\mathcal{S}_0(\mathcal{H}) $.
The following statement is, in principle, the dual statement of Theorem \ref{thm:equivalent_feichtinger_characterizations}.
\begin{theorem}
Let $A \in \mathcal S'(\mathcal H)$ and let $B \in \mathcal{S}_0(\mathcal H)\setminus\{0\}$ be any fixed Feichtinger operator. Denote by 
    \[ \| A\|_{\infty, B, \gamma}\coloneqq \sup_{z \in \mathbb R^{2n}} \| (\gamma_z A) \ast B\|_{L^1}, \quad \| A\|_{\infty, B, \gamma\alpha} \coloneqq \sup_{z,z' \in \mathbb R^{2n}} |\langle A,  \gamma_{z'}\alpha_z(B)\rangle_{\mathcal{S}_0',\mathcal{S}_0}|. \]
Then, the following characterizations are equivalent:
\begin{enumerate}[1)]
    \item\label{thm:equiv_mild 1} The operator $A$ can be written as $A = A_{g}$ for some $g \in \mathcal{S}_{0}'(\mathbb{R}^{2n})$.
    \item\label{thm:equiv_mild 2} There exists an element $K_{A}\in \mathcal{S}_{0}'(\mathbb{R}^{2n})$ such that $\langle Af_2, f_1\rangle_{\mathcal{S}_0',\mathcal{S}_0}= \langle K_A, f_1\otimes f_2\rangle_{\mathcal{S}_0',\mathcal{S}_0}$ for all $f_1, f_2\in \mathcal{S}_0(\mathbb{R}^n)$.
    \item\label{thm:equiv_mild 3} The Fourier-Weyl transform of $A$ satisfies $\mathcal{F}_{W}(A) \in \mathcal{S}_{0}'(\mathbb{R}^{2n})$.
    \item\label{thm:equiv_mild 4} $A$ satisfies $\| A\|_{\infty, B, \gamma} < \infty$.
    \item\label{thm:equiv_mild 5} $A$ satisfies $\sup_{z,z' \in \mathbb R^{2n}} \| A\|_{\infty, B, \gamma\alpha} < \infty$.
\end{enumerate}
    Moreover, the following are equivalent norms on $\mathcal{S}_0'(\mathcal H)$:
    \[\|A\|_{\mathcal{S}_0'} \cong \| \mathcal F_W(A)\|_{\mathcal S_0'} \cong \| K_A\|_{\mathcal S_0'} \cong \| A\|_{\infty, B, \gamma} \cong \| A\|_{\infty, B, \gamma\alpha}. \]
\end{theorem}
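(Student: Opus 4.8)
The plan is to take characterization \ref{thm:equiv_mild 1} as the hub and reduce every other statement to a known property of the space of mild distributions $\mathcal{S}_0'(\mathbb{R}^{2n})$ on the function side, transferring it through the two topological isomorphisms at our disposal: the Weyl quantization $f \mapsto A_f$ and the Fourier-Weyl transform $\mathcal{F}_W$. Both restrict to isomorphisms intertwining $\mathcal{S}_0(\mathbb{R}^{2n})$ with $\mathcal{S}_0(\mathcal{H})$ and hence, by dualizing, $\mathcal{S}_0'(\mathbb{R}^{2n})$ with $\mathcal{S}_0'(\mathcal{H})$. Concretely, I would first record that $\|A_g\|_{\mathcal{S}_0'} = \|g\|_{\mathcal{S}_0'(\mathbb{R}^{2n})}$ by definition, so the entire theorem is the faithful image under these isomorphisms of the (known) characterizations of $\mathcal{S}_0'(\mathbb{R}^{2n})$ by its STFT, its kernel, and its Wiener-amalgam norms, which are precisely the mild-distribution duals of the statements used in Theorem \ref{thm:equivalent_feichtinger_characterizations}.

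The two algebraic equivalences come essentially for free. For \ref{thm:equiv_mild 1} $\Leftrightarrow$ \ref{thm:equiv_mild 3}, the identity $\mathcal{F}_W(A_g) = \mathcal{F}_\sigma(\widetilde{g})$ together with the fact that $\mathcal{F}_\sigma$ and $g \mapsto \widetilde g$ are isomorphisms of $\mathcal{S}_0'(\mathbb{R}^{2n})$ shows $A = A_g \in \mathcal{S}_0'(\mathcal{H})$ iff $\mathcal{F}_W(A) \in \mathcal{S}_0'(\mathbb{R}^{2n})$, with $\|\mathcal{F}_W(A)\|_{\mathcal{S}_0'} \cong \|A\|_{\mathcal{S}_0'}$. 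For \ref{thm:equiv_mild 1} $\Leftrightarrow$ \ref{thm:equiv_mild 2}, I would invoke the kernel theorem for mild distributions: the passage from the Weyl symbol $g$ to the integral kernel $K_A$ is the composition of a partial symplectic Fourier transform with a linear change of variables, each a topological isomorphism of $\mathcal{S}_0(\mathbb{R}^{2n})$ extending by duality to $\mathcal{S}_0'(\mathbb{R}^{2n})$. This is exactly the dual of the $\mathcal{S}_0$-level equivalence \ref{thm:equiv_feichtinger 1} $\Leftrightarrow$ \ref{thm:equiv_feichtinger 2}, cf.~\cite[Sec.~7.4]{Heil2003}, and again preserves norms up to equivalence.

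For the two analytic characterizations I would mirror the corresponding steps of Theorem \ref{thm:equivalent_feichtinger_characterizations}. Writing $A = A_g$ and $B = A_{g_B}$ with $g_B \in \mathcal{S}_0$, the quantization identities \eqref{eq: quantization of convolution} and $\gamma_z(A_g) = A_{\gamma_{-z}g}$ give $(\gamma_z A)\ast B = \widetilde{(\gamma_{-z}g)\ast g_B}$, so that $\|A\|_{\infty, B, \gamma} = \sup_z \|(\gamma_z g)\ast g_B\|_{L^1}$; finiteness of the right-hand side is precisely the Wiener-amalgam characterization of $g \in \mathcal{S}_0'(\mathbb{R}^{2n})$, the dual of the integrability condition in \ref{thm:equiv_feichtinger 5}, cf.~\cite[Thm.~4.7]{jakobsen18}. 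For \ref{thm:equiv_mild 5}, the computation in the proof of Theorem \ref{thm:equivalent_feichtinger_characterizations}\ref{thm:equiv_feichtinger 6} carries over: with $f = \mathcal{F}_W(A)$ and $f_B = \mathcal{F}_W(B) \in \mathcal{S}_0$ one obtains $\langle A, \gamma_{z'}\alpha_z B\rangle_{\mathcal{S}_0',\mathcal{S}_0} = V_{f_B}f(z, z')$ up to unimodular factors, whence $\|A\|_{\infty, B, \gamma\alpha} = \|V_{f_B} f\|_{L^\infty}$, which is finite iff $f \in \mathcal{S}_0'(\mathbb{R}^{2n})$, i.e.~iff $A \in \mathcal{S}_0'(\mathcal{H})$ by \ref{thm:equiv_mild 3}. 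All five norm equivalences then follow by combining these identifications with the function-side equivalence of the $\mathcal{S}_0'$-norms and the open mapping theorem applied to the isomorphisms above.

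The main obstacle I anticipate lies not in any single computation but in the bookkeeping of dualities and the well-definedness of the test expressions for a general tempered operator $A \in \mathcal{S}'(\mathcal{H})$. One must check that $\alpha_z\gamma_{z'}(B)$ stays in $\mathcal{S}_0(\mathcal{H})$ so that the pairing $\langle A, \gamma_{z'}\alpha_z B\rangle_{\mathcal{S}_0',\mathcal{S}_0}$ makes sense a priori, that the operator-operator convolution $(\gamma_z A)\ast B$ extends consistently to this setting and agrees with the weak$^\ast$-defined quantity used above, and --- most delicately --- that finiteness against a \emph{single} fixed window $B$ already forces membership in $\mathcal{S}_0'(\mathcal{H})$. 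This last point is the reconstruction (correspondence-principle) content of coorbit theory; on the operator side it should be inherited from the analogous one-window sufficiency for $\mathcal{S}_0'(\mathbb{R}^{2n})$ once the pairings are correctly matched under $\mathcal{F}_W$, so the real work is in verifying that the conjugations, tildes, and symplectic signs line up rather than in proving anything genuinely new.
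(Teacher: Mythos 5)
Your proposal is correct and is essentially the paper's own (omitted) argument: the paper explicitly declines to give a proof, stating that it is ``very similar to that of Theorem \ref{thm:equivalent_feichtinger_characterizations}, replacing characterizations of $\mathcal S_0(\mathbb R^{2n})$ with those of $\mathcal S_0'(\mathbb R^{2n})$,'' which is precisely your strategy of transferring the known dual characterizations (kernel theorem, STFT/Wiener-amalgam bounds against a single window) through the Weyl quantization and Fourier--Weyl isomorphisms. Your fleshed-out computations, including $(\gamma_z A)\ast B=\widetilde{(\gamma_{-z}g)\ast g_B}$ and the identification of $\|A\|_{\infty,B,\gamma\alpha}$ with $\|V_{f_B}f\|_{L^\infty}$, are faithful duals of the steps the paper carries out for Theorem \ref{thm:equivalent_feichtinger_characterizations}, so nothing further is needed.
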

We omit the proof, as it is very similar to that of Theorem \ref{thm:equivalent_feichtinger_characterizations}, replacing characterizations of $\mathcal S_0(\mathbb R^{2n})$ with those of $\mathcal S_0'(\mathbb R^{2n})$, cf.\ \cite{jakobsen18}. 
Note that $\mathcal L(\mathcal H)$ continuously embeds into $\mathcal S_0'(\mathcal H)$. Further, by the kernel theorem for $\mathcal S_0(\mathbb R^n)$, cf.\ \cite[Thm.~9.3]{jakobsen18} and characterization \ref{thm:equiv_mild 2}, $\mathcal S_0'(\mathcal H)$ agrees exactly with those elements of $\mathcal S'(\mathcal H)$ which extend to bounded linear operators from $\mathcal S_0(\mathbb R^n)$ to $\mathcal S_0'(\mathbb R^n)$. Hence, among all the operators given by kernels/symbols in $\mathcal S'$, characterization \ref{thm:equiv_mild 4} and \ref{thm:equiv_mild 5} from the proposition yield characterizations through quantum harmonic analysis methods of those which are well-behaved in the Feichtinger setting.

\begin{remark}
We now briefly discuss how key properties of the mild distributions carry over to $\mathcal S_0'(\mathcal H)$ and $\mathcal S_0'(\mathbb R^{2n}) \oplus \mathcal S_0'(\mathcal H)$. First note that $\mathcal S_0'(\mathbb R^{2n}) \oplus \mathcal S_0'(\mathcal H)$ is not an algebra, but it is a module over $\mathcal S_0(\mathbb R^{2n}) \oplus \mathcal S_0(\mathcal H)$.
\begin{enumerate}[1)]
\item The space $\mathcal{S}_{0}^{Q}$ can through the Weyl quantization be identified with $\mathcal{S}_{0}(\mathbb{R}^{2n}) \oplus \mathcal{S}_{0}(\mathbb{R}^{2n})$ as a Banach space. As such, the Banach space dual $\mathcal{S}_{0}^{Q}{}'\coloneqq(\mathcal{S}_{0}^{Q})'$ of $\mathcal{S}_{0}^{Q}$ can be identified with $\mathcal{S}_{0}'(\mathbb{R}^{2n}) \oplus \mathcal{S}_{0}'(\mathbb{R}^{2n})$.
Hence bounded linear operators 
\[A\colon \mathcal{S}_{0}^{Q} \to \mathcal{S}_{0}^{Q}{}'\]
can be decomposed into four operators $A_{ij}\colon  \mathcal{S}_{0}(\mathbb{R}^{2n}) \to \mathcal{S}_{0}'(\mathbb{R}^{2n})$ for $i,j =1,2$. Each of these operators can be represented with an integral kernel $K_{ij} \in \mathcal{S}_{0}'(\mathbb{R}^{4n})$ by \cite[Thm.~9.3]{jakobsen18}.
Thus for $f_{1},f_{2},g_{1},g_{2} \in \mathcal{S}_{0}(\mathbb{R}^{2n})$ we can write $A$ as
\begin{multline*}
\langle A(f_{1}, A_{g_{1}}), (f_{2}, A_{g_{2}}) \rangle_{\mathcal{S}_{0}^{Q}{}', \mathcal{S}_{0}^{Q}} = \langle K_{11}, f_{2} \otimes f_{1} \rangle_{\mathcal{S}_0',\mathcal{S}_0} + \langle K_{21}, f_{2} \otimes g_{1} \rangle_{\mathcal{S}_0',\mathcal{S}_0}\\
+ \langle K_{12}, g_{2} \otimes f_{1} \rangle_{\mathcal{S}_0',\mathcal{S}_0} + \langle K_{22}, g_{2} \otimes g_{1} \rangle_{\mathcal{S}_0',\mathcal{S}_0}.
\end{multline*}
From here, it is not difficult to formulate a kernel theorem for bounded linear maps from $\mathcal{S}_0^Q$ to $\mathcal{S}_0^{Q}{}'$. We leave the details to the interested reader.
\item Recall that elements from $\mathcal S_0(\mathbb R^n)$ satisfy Poisson's summation formula. Indeed, there is a more general form with summation over different lattices, but for simplicity we stick to the following basic formulation: It is
\begin{align*}
    \sum_{k \in \mathbb Z^n} f(k) =(2\pi)^{n/2} \sum_{k \in 2\pi \mathbb{ Z}^n} \mathcal F(f)(k).
\end{align*}
In even dimensions, the right-hand side can of course be replaced by the symplectic Fourier transform. For $A = A_g \in \mathcal S_0(\mathcal{H})$, we therefore of course have
\begin{align*}
    \sum_{k \in 2\pi \mathbb Z^{2n}} \mathcal F_W(A_g)(k) = \sum_{k \in 2\pi \mathbb Z^{2n}} \mathcal F_\sigma(g)(k) = \frac{1}{(2\pi)^n}\sum_{k \in \mathbb Z^{2n}} g(k),
\end{align*}
which can be interpreted as a Poisson summation formula for quantum harmonic analysis. By Theorem \ref{thm:equivalent_feichtinger_characterizations} \ref{thm:equiv_feichtinger 2} we know that every $A \in \mathcal S_0(\mathcal{H})$ is of the form \[Af(s) = \int_{\mathbb R^n} f(t) K_A(s,t)\,\mathrm{d}y\]
for $K_A \in \mathcal{S}_0$. For $(x,\xi) \in \mathbb R^{2n}$, it is not hard to verify that the integral kernel of $AW_{(x, \xi)}$ is now
\begin{equation*}
K_{AW_{(x, \xi)}}(s, t) = e^{i\xi \cdot t+i\xi \cdot x/2} K_A(s,t+x).
\end{equation*}
By \cite[Cor.~3.15]{feichtinger_jakobsen2022} we have
\begin{align*}
    \mathcal F_W(A)(x, \xi) = \tr(AW_{(x, \xi)}) = \int_{\mathbb R^n} e^{i\xi \cdot s+i\xi \cdot x/2}K_A(s,s+x)\,\mathrm{d}s.
\end{align*}
On the other hand, if $A=A_g$ then $g$ can be computed by
\begin{align*}
    g(x,\xi) = \int_{\mathbb R^n} K_A\left(x+\frac{y}{2}, x-\frac{y}{2}\right) e^{- i \xi \cdot y}\,\mathrm{d}y \in \mathcal S_0(\mathbb R^{2n}).
\end{align*}
Hence, the above Poisson summation formula shows
\begin{align*}
   \sum_{k \in 2\pi\mathbb Z^{2n}} \mathcal F_W(A_g)(k)&=  \sum_{(k_1, k_2) \in \mathbb Z^{2n}} \int_{\mathbb R^n} e^{2\pi i k_2 \cdot s+2\pi^2 ik_1k_2}K_A(s,s+2\pi k_1)\,\mathrm{d}s=\frac{1}{(2\pi )^n} \sum_{k \in \mathbb Z^{2n}} g(k) \\
    &= \frac{1}{(2\pi )^n}\sum_{(k_1, k_2) \in \mathbb Z^{2n}} \int_{\mathbb R^n} K_A(k_1+\frac{y}{2}, k_1-\frac{y}{2}) e^{- i k_2 \cdot y}\,\mathrm{d}y.
\end{align*}
 A similar formula can be obtained by expressing the integral kernel in terms of the symbol.
\item Another point of view on the Poisson summation formula is the following: It is
\begin{align*}
  \frac{1}{(2\pi)^n} \sum_{k \in \mathbb Z^{2n}} \langle \delta_k, f\rangle = \frac{1}{(2\pi)^n} \sum_{k \in \mathbb Z^{2n}} f(k) = \sum_{k \in 2\pi \mathbb Z^{2n}} \widehat{f}(k) = \sum_{k \in 2\pi\mathbb Z^{2n}} \langle \delta_k, \widehat{f}\rangle,
\end{align*}
where $\sum_{k \in \mathbb Z^{2n}} \delta_k, \ \sum_{k \in 2\pi\mathbb Z^{2n}} \widehat{\delta_k}$ are in $S_0'(\mathbb R^{2n})$. Hence 
\begin{align*}
   \frac{1}{(2\pi)^n} \sum_{k \in \mathbb Z^{2n}} \alpha_k(\delta_0) = \sum_{k \in 2\pi\mathbb Z^{2n}} \mathcal F_\sigma(\alpha_k(\delta_0)) = \sum_{k \in 2\pi \mathbb Z^{2n}} \gamma_k \mathcal F_\sigma(\delta_0) = \sum_{k \in 2\pi \mathbb Z^{2n}} \frac{1}{(2\pi)^n}\gamma_k(1).
\end{align*}
Applying the inverse Fourier-Weyl transform, using that $\mathcal F_W^{-1}(\delta_0) = \mathrm{Id}$ and $\mathcal F_W^{-1}(1) = 2^nP$, we get 
\begin{align*}
    \sum_{k \in \mathbb Z^{2n}}\mathcal F_W^{-1} (\alpha_k(\delta_0)) &= \sum_{k \in \mathbb Z^{2n}} \gamma_k \mathcal F_W^{-1}(\delta_0) = \sum_{k \in \mathbb Z^{2n}} \gamma_k(\mathrm{Id}) = \sum_{k \in \mathbb Z^{2n}} W_{k},\\
    \sum_{k \in 2\pi \mathbb Z^{2n}} \mathcal F_W^{-1}(\gamma_k(1)) &= \sum_{k \in 2\pi \mathbb Z^{2n}} \alpha_k \mathcal F_W^{-1}(1) = 2^n\sum_{k \in 2\pi \mathbb Z^{2n}} \alpha_k(P) = 2^n\sum_{k \in2\pi  \mathbb Z^{2n}} W_{2k}P.
\end{align*}
Hence, we obtain in $\mathcal S_0'(\mathcal{H})$: 
\begin{align*}
   2^n \sum_{k \in 2\pi \mathbb Z^{2n}} W_{2k}P = \sum_{k \in \mathbb Z^{2n}} W_{k}.
\end{align*}
Applying this to some $A \in \mathcal S_0(\mathcal{H})$ gives the somewhat ominous result:
\begin{align*}
    \sum_{k \in \mathbb Z^{2n}} \mathcal F_W(A)(k) = 2^n\sum_{k \in 2\pi \mathbb Z^{2n}} \mathcal F_W(AP)(2k).
\end{align*}
This equality can conceptually be explained a little better by doing the following: Recall the parity operator is both symmetric and unitary, hence $P$ has spectrum contained in $\{ -1, 1\}$. The spectral decomposition of $\mathcal{H}$ is given by $\mathcal{H} = L_{\mathrm{even}}^2 \oplus L_{\mathrm{odd}}^2$, the decomposition into even and odd functions. Now, $P|_{L_{\mathrm{even}}^2} = \mathrm{Id}$ and $P|_{L_{\mathrm{odd}}^2} = -\mathrm{Id}$. We define a unitary square root $V$ of $P$ by letting $V|_{L_{\mathrm{even}}^2} = \mathrm{Id}$ and $V|_{L_{\mathrm{odd}}^2} = i\mathrm{Id}$. Then $VW_{(x,\xi)} = W_{(\xi, -x)}V$ and $W_z V = VW_{(-\xi, x)}$. Thus, we get
\begin{align*}
   \sum_{k \in 2\pi \mathbb Z^{2n}} \mathcal F_W(AP)(2k) = \sum_{k_1,k_2 \in 2\pi \mathbb Z^{n}} \mathcal F_W(V A V)(2k_1,-2k_1) = \sum_{k \in 4\pi  \mathbb Z^{2n}} \mathcal F_W(V A V)(k).
\end{align*}
Since passing from $A$ to $PAP$ plays the role of a reflection (analogously to passing from $f(z)$ to $f(-z)$, $z \in \mathbb C$), passing from $A$ to $VAV$ can be thought of as rotating the argument by $\frac{\pi}{2}$ (analogously to passing from $f(z)$ to $f(iz)$). Recalling now that taking the Fourier transform can be thought of as rotating the phase space by $\frac{\pi}{2}$, it is somewhat more reasonable that the Poisson summation formula should relate the $A$ and $V A V$.
\end{enumerate}

\end{remark}
\subsection{Miscellaneous examples}
We have thus far seen examples of quantum Segal algebras that are not ideals of $L^1 \oplus \mathcal T^1$, and will now describe a class of examples that are ideals. They are natural analogues of the Segal algebras from Example \ref{ex:segal} \ref{ex:segal E3}. 
\begin{example}Let $\mu$ be any Radon measure on $\mathbb R^{2n}$. Define the set $S_p^\mu$ by
\begin{align*}
    S_p^\mu = \{ (f, A) \in L^1 \oplus \mathcal T^1: ~\| \widehat{f}\|_{L^p(\mu)} + \| \widehat{A}\|_{L^p(\mathbb{R}^{2n},\mu)} < \infty\},
\end{align*}
with the norm 
\begin{align*}
    \| (f, A)\|_{S_p^\mu} = \| f\|_{L^1} + \| \widehat{f}\|_{L^p(\mathbb{R}^{2n},\mu)} + \| A\|_{\mathcal T^1} + \| \widehat{A}\|_{L^p(\mathbb{R}^{2n},\mu)}.
\end{align*}
Since $S_p^\mu$ contains any $(f, A) \in L^1 \oplus \mathcal T^1$ such that $\widehat{f}$ and $\widehat{A}$ both have compact support, it is a dense subspace of $L^1 \oplus \mathcal T^1$. Further, it is not hard to verify that $S_p^\mu$ is both a graded quantum Segal algebra and an ideal of $L^1 \oplus \mathcal T^1$. In the case that $\mu$ is a finite measure we have that $S_p^\mu = L^1 \oplus \mathcal T^1$, but for appropriately infinite measures $S_p^\mu$ is a strict subset.

Note that, as in \cite[(vii), p.~26]{reiter71}, these $S_p^\mu$ also have the following theoretical value: Given $A \in \mathcal T^1$ such that $\widehat{A}$ is not compactly supported, then one can explicitly write down a $\mu$ such that the graded quantum Segal algebra $S_1^\mu$ does not contain $A$.
\end{example}
We note that quantum Segal algebras, which are ideals of $L^1 \oplus \mathcal T^1$, are abstract Segal algebras in the sense of Burnham \cite[Def.~1.1]{Burnham1972}. In particular, such quantum Segal algebras have the same closed ideals as $L^1 \oplus \mathcal T^1$. We refrain from giving the precise result here and only refer to \cite[Thm.~1.1]{Burnham1972}. Instead, we want to give another result on such quantum Segal algebras, which is a close relative to our Corollary \ref{intersection_all_qsa}.
\begin{proposition}\label{intersection_all_qsa_ideals}
Let $G$ denote the set of all graded quantum Segal algebras which are ideals of $L^1(\mathbb R^{2n}) \oplus \mathcal T^1(\mathcal H)$. Then 
\[\mathcal{F}\Big(\bigcap_{S_1\oplus S_2\in G} S_1\oplus S_2\Big) = (C_{c}(\mathbb{R}^{2n})\cap \mathcal{F}_\sigma(L^1(\mathbb{R}^{2n}))) \oplus (C_{c}(\mathbb{R}^{2n})\cap \mathcal{F}_W(\mathcal T^1(\mathcal H))) .\]
\end{proposition}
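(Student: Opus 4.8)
The plan is to reduce the statement to two independent computations, one on the function side and one on the operator side. Since every member of $G$ is graded, the intersection $D \coloneqq \bigcap_{S_1\oplus S_2\in G}(S_1\oplus S_2)$ is again graded, so $D = D_1\oplus D_2$ with $D_1 = \{f : (f,0)\in D\}$ and $D_2 = \{A : (0,A)\in D\}$; unwinding the definitions gives $D_1 = \bigcap S_1$ and $D_2 = \bigcap S_2$, the intersections ranging over all first, respectively second, components occurring in $G$. As $\mathcal F(D) = \mathcal F_\sigma(D_1)\oplus \mathcal F_W(D_2)$, it suffices to prove the two equalities $\mathcal F_\sigma(D_1) = C_c(\mathbb R^{2n})\cap \mathcal F_\sigma(L^1(\mathbb R^{2n}))$ and $\mathcal F_W(D_2) = C_c(\mathbb R^{2n})\cap \mathcal F_W(\mathcal T^1(\mathcal H))$, each by a separate inclusion.

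For $\supseteq$, the function part is immediate: each first component $S_1$ is a Segal algebra, so by Remark~\ref{re:intersection_Segal} it contains every $f$ with $\widehat f\in C_c$, whence $\mathcal F_\sigma(D_1)\supseteq C_c\cap \mathcal F_\sigma(L^1)$. The operator part is the crux and is precisely where the ideal hypothesis enters, distinguishing this result from Corollary~\ref{intersection_all_qsa}. Fix $A\in \mathcal T^1$ with $\widehat A = \mathcal F_W(A)\in C_c$ and let $S_1\oplus S_2 = QS\in G$. Choose $\phi\in C_c^\infty(\mathbb R^{2n})$ with $\phi\equiv 1$ on $\operatorname{supp}(\widehat A)$; then $\phi\in C_c\cap \mathcal F_\sigma(L^1)$, so by Remark~\ref{re:intersection_Segal} we may write $\phi = \widehat h$ with $h$ lying in every Segal algebra, in particular $h\in S_1$ and $(h,0)\in QS$. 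Since $QS$ is an ideal of $L^1\oplus \mathcal T^1$, we get $(0,A)\ast(h,0) = (0, h\ast A)\in QS$, hence $h\ast A\in S_2$. By \eqref{eq:conv_function_operator} one has $\mathcal F_W(h\ast A) = \widehat h\,\widehat A = \phi\widehat A = \widehat A$, and injectivity of $\mathcal F_W$ yields $A = h\ast A\in S_2$. As $QS$ was arbitrary, $A\in D_2$, proving $\mathcal F_W(D_2)\supseteq C_c\cap \mathcal F_W(\mathcal T^1)$.

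For $\subseteq$, I would use the family of graded quantum Segal algebra ideals $S_1^\mu$ from the preceding Example, whose first and second components are $\{f : \|\widehat f\|_{L^1(\mu)}<\infty\}$ and $\{A : \|\widehat A\|_{L^1(\mu)}<\infty\}$ as $\mu$ ranges over all Radon measures on $\mathbb R^{2n}$. If $(f,A)\in D$ then $(f,A)\in S_1^\mu$ for every such $\mu$, so $\|\widehat f\|_{L^1(\mu)}<\infty$ and $\|\widehat A\|_{L^1(\mu)}<\infty$ for all $\mu$. Both $\widehat f$ and $\widehat A$ are continuous and vanish at infinity by \eqref{eq:Riemann Lebesgue}. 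Now I run Reiter's argument of \cite[(vii), p.~26]{reiter71} symmetrically: if, say, $\widehat A$ failed to have compact support, pick $z_k\to\infty$ with $\widehat A(z_k)\neq 0$ and set $\mu = \sum_k |\widehat A(z_k)|^{-1}\delta_{z_k}$, which is a Radon measure (finite on compacta since $z_k\to\infty$) for which $\|\widehat A\|_{L^1(\mu)} = \sum_k 1 = \infty$, contradicting $(f,A)\in S_1^\mu$. Hence $\widehat A\in C_c$, and since $\widehat A\in \mathcal F_W(\mathcal T^1)$ automatically, $\mathcal F_W(D_2)\subseteq C_c\cap \mathcal F_W(\mathcal T^1)$. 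The identical measure argument applied to $\widehat f$ gives $\mathcal F_\sigma(D_1)\subseteq C_c\cap \mathcal F_\sigma(L^1)$.

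The step I expect to be the main obstacle is the operator inclusion $\supseteq$, which is exactly the point at which the answer differs from Corollary~\ref{intersection_all_qsa}: for a general graded quantum Segal algebra the second component need only contain operators whose Fourier--Weyl transform lies in the smaller set $C_c\cap \mathcal F_\sigma(L^1)$ (as produced by Proposition~\ref{Prop:construction}), and there is no reason it should contain every $A\in \mathcal T^1$ with $\widehat A\in C_c$. The ideal hypothesis is what supplies the missing multiplier relation $\mathcal F_\sigma(S_1)\cdot \mathcal F_W(\mathcal T^1)\subseteq \mathcal F_W(S_2)$, and the care needed is to check that a single bump function $\phi$ can be chosen both to lie in $\mathcal F_\sigma(S_1)$ (which works because $C_c^\infty\subseteq C_c\cap \mathcal F_\sigma(L^1)$ sits inside every Segal algebra) and to act as a local unit on $\operatorname{supp}(\widehat A)$. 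Once this is in place the remaining inclusions are routine, the Reiter measure construction being insensitive to whether it is applied to $\widehat f$ or to $\widehat A$.
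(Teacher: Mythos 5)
Your proof is correct and takes essentially the same approach as the paper: the inclusion ``$\subseteq$'' via the graded quantum Segal algebra ideals $S_1^\mu$ together with Reiter's measure construction, and the inclusion ``$\supseteq$'' by convolving $(0,A)$ with an element $(h,0)$ of the intersection whose Fourier transform is a bump function equal to $1$ on $\operatorname{supp}(\widehat{A})$, then invoking the ideal property and the injectivity of $\mathcal{F}_W$. The only difference is cosmetic: you spell out the Reiter measure argument explicitly, which the paper delegates to the preceding example and to \cite[(vii), p.~26]{reiter71}.
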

\begin{proof}
As described above, the algebras $S_1^\mu$ can be utilized to show the inclusion ``$\subseteq$''. For the reverse inclusion, note that for any $f \in L^1(\mathbb R^{2n})$ with $\widehat{f} \in C_c(\mathbb R^{2n})$ we have that $(f, 0)$ is contained in the intersection by Corollary \ref{intersection_all_qsa}. Let $A \in \mathcal T^1$ such that $\widehat{A}$ is compactly supported. Then, there exists $g \in \mathcal S(\mathbb R^{2n})$ such that $\widehat{g}$ is compactly supported and $\widehat{g} \equiv 1$ on the support of $\widehat{A}$. In particular, $(g, 0)$ is contained in every graded quantum Segal algebra. Thus, the convolution $(g, 0) \ast (0, A)$ is contained in every graded quantum Segal algebra which is an ideal in $L^1 \oplus \mathcal T^1$. By comparing Fourier transforms, we find that $(g, 0) \ast (0, A) = (0, A)$. Therefore, the element $(0, A)$ is contained in every graded quantum Segal algebra which is an ideal of $L^1 \oplus \mathcal T^1$.
\end{proof}

We still owe the reader an example justifying the special treatment of graded quantum Segal algebras, i.e., showing that not every quantum Segal algebra is graded. Here it is:
\begin{example}\label{qsa:notgraded}
    Assume that $S_1$ and $S_2$ are two Segal algebras contained in the Feichtinger algebra $\mathcal S_0$ where there exist $f_1\in S_1$ and $g_1\in S_2$ such that $f_1\not \in S_2$ and $g_1\not\in S_1$. 
     Let 
    \[S_T=\{(f,A_{\tilde{g}}): f+g \in S_1, ~f-g \in S_2\}\]
    with the norm 
   \[\|(f,A_{\tilde{g}})\|_{S_T}\coloneqq \|f+g\|_{S_1}+\|f-g\|_{S_2}.\]
   Then 
   \begin{align*}
      \|(f,A_{\tilde{g}})\ast(h,A_{\tilde{j}})\|_{S_T} &= \|(f+g)\ast(h+j)\|_{S_1}+\|(f-g)\ast(h-j)\|_{S_2}\\
       &\le \|(f+g)\|_{S_1}\|(h+j)\|_{S_1}+\|(f-g)\|_{S_2}\|(h-j)\|_{S_2}\\
       &\le \|(f,A_{\tilde{g}})\|_{S_T}\|(h,A_{\tilde{j}})\|_{S_T},
   \end{align*}
   showing that $S_T$ is a normed algebra. Further, it is not hard to see that it is complete and satisfy \ref{label:QS3} and \ref{label:QS4}. The property \ref{label:QS1} follows from the fact that $C_0(\mathbb{R}^{2n})\oplus C_0(\mathbb{R}^{2n})\subset \mathcal{F}(S_T)$. 
   If we set $f+g=f_1$ and $f-g=g_1$ we have an example $(f, A_g) \in S_T$ where $(f,0)$ cannot be in $S_T$.

   We give such an example of two Segal algebras $S_1$, $S_2$ in the case $n=1$, and leave it to the reader to carry out the necessary adaptations for obtaining such subalgebras of $\mathcal S_0(\mathbb R^{2n})$.
   Let $S_1=\mathcal S_0(\mathbb R^2) \cap \mathcal{F}^{-1}(L^2(\mathbb{R}^2, (1+x^2)^{10}))$ and $S_2=\mathcal S_0(\mathbb R^2) \cap \mathcal{F}^{-1}(L^2(\mathbb{R}^2, e^{2x}))$. By Example \ref{ex:segal}, these are intersections of Segal algebras, hence themselves Segal algebras.
    Consider the function \[f_a(x,\xi)=\frac{1}{x^2+a^2}\frac{1}{\xi^2+a^2}\]
    with Fourier transform 
    \[\mathcal{F}(f_a)(x,\xi)=\frac{\pi}{2a^2}e^{-a(|x|+|\xi|)}.\]
    Using that $f_a\in \mathcal S_0(\mathbb R^{2})$ for all $a$, we observe that $f_1\in S_1$ but $f\not \in S_2$. On the other hand, the function \[g(x,\xi)=\frac{1-e^{-x^2}}{1+x^2}\frac{1-e^{-\xi^2}}{1+\xi^2}H(-x)H(-\xi),\] with $H$ the Heaviside function, is smooth and in $L^1(\mathbb{R}^2)$ with the first two derivatives in $L^1(\mathbb{R}^2)$. Hence, its Fourier transform is in $L^1(\mathbb{R}^2)$ as well. We know that $\mathcal S_0(\mathbb R^2)\ast L^1(\mathbb R^2) \subset \mathcal S_0(\mathbb R^2)$, hence \[g_1=\mathcal{F}^{-1}(f_1)\ast \mathcal{F}^{-1}\left(g\right)\in \mathcal S_0(\mathbb R^2).\]
    Then $g_1\in S_2$ while $g_1\not \in S_1$.

\end{example}

\bibliographystyle{abbrv}
\bibliography{main}
\vspace{1cm}
\begin{multicols}{2}
[
\noindent
Eirik Berge\\
\href{ebbeberge94@gmail.com}{\Letter ~ebbeberge94@gmail.com}
]

\noindent
Stine Marie Berge\\
\href{stine.berge@math.uni-hannover.de}{\Letter ~stine.berge@math.uni-hannover.de}
\\
\noindent
Institut f\"{u}r Analysis\\
Leibniz Universit\"at Hannover\\
Welfengarten 1\\
30167 Hannover\\
GERMANY\\ 

\noindent
Robert Fulsche\\
\href{fulsche@math.uni-hannover.de}{\Letter ~fulsche@math.uni-hannover.de}
\\
\noindent
Institut f\"{u}r Analysis\\
Leibniz Universit\"at Hannover\\
Welfengarten 1\\
30167 Hannover\\
GERMANY
\end{multicols}

\end{document}